\documentclass[final, 11pt,a4paper]{amsart} 

\usepackage[all, 2cell]{xy}
\UseAllTwocells
\usepackage{placeins}
\usepackage{enumitem}
\usepackage{amssymb}
\usepackage{latexsym}
\usepackage{amsmath}
\usepackage{mathrsfs}
\usepackage{array,booktabs}
\usepackage{verbatim}
\usepackage{amsthm}
\usepackage{dsfont}
\usepackage{bbm}

\usepackage[dvipsnames]{xcolor}

\usepackage{fullpage}

\theoremstyle{plain}
\newtheorem{theorem}{Theorem}[section]
\newtheorem*{theorem*}{Theorem}

\newtheorem{lemma}[theorem]{Lemma}
\newtheorem{corollary}[theorem]{Corollary}
\newtheorem{proposition}[theorem]{Proposition}

\theoremstyle{definition}
\newtheorem{remark}[theorem]{Remark}

\newtheorem{example}[theorem]{Example}

\newtheorem{construction}[theorem]{Construction}
\newtheorem*{acknowledgements}{Acknowledgements}

\newcommand{\Z}{\mathbb{Z}}
\newcommand{\Q}{\mathbb{Q}}
\renewcommand{\P}{\mathbb{P}}
\newcommand{\T}{\mathcal{T}}

\newcommand{\Tc}{\mathcal{T}^\mathrm{c}}
\renewcommand{\c}{{\mathrm{c}}}
\newcommand{\heart}[1]{\mathcal{G}_{#1}}
\newcommand{\Mod}[1]{\mathrm{Mod}\text{-}{#1}}
\renewcommand{\mod}[1]{\mathrm{mod}\text{-}{#1}}
\newcommand{\Cogen}[1]{\mathrm{Cogen}(#1)}
\newcommand{\y}{\mathbf{y}}
\newcommand{\Prod}[1]{\mathrm{Prod}(#1)}
\newcommand{\ele}[1]{\mathrm{Cogen}_*(#1)}
\newcommand{\Def}[2]{\mathrm{Def}_{#1}({#2})}
\newcommand{\Ab}{\mathbf{Ab}}
\newcommand{\Hom}[1]{\mathrm{Hom}_{#1}}
\newcommand{\Ext}[1]{\mathrm{Ext}_{#1}^1}
\newcommand{\Pinj}[1]{\mathrm{Pinj}(#1)}
\newcommand{\Inj}[1]{\mathrm{Inj}(#1)}
\newcommand{\Fpinj}[1]{\mathrm{Fpinj}(#1)}
\newcommand{\D}[1]{\mathrm{D}(#1)}
\newcommand{\fp}[1]{#1^{\mathrm{fp}}}
\newcommand{\DModR}{\mathrm{D}(\Mod{R})}
\newcommand{\Ch}[1]{\mathrm{Ch}(#1)}
\newcommand{\Der}{\mathbb{D}}
\newcommand{\hocolim}[2]{\mathrm{hocolim}_{#1}(#2)}
\newcommand{\holim}[2]{\mathrm{holim}_{#1}(#2)}
\newcommand{\hocolimf}[1]{\mathrm{hocolim}_{#1}}
\newcommand{\holimf}[1]{\mathrm{holim}_{#1}}
\newcommand{\CAT}{\mathcal{C}AT}
\newcommand{\Cat}{\mathcal{C}at}
\newcommand{\op}{{\mathrm{op}}}

\newcommand{\base}{\mathbf{1}}
\newcommand{\dia}[1]{\mathrm{dia}_{#1}}
\newcommand{\Pow}[1]{\mathcal{P}(#1)}
\newcommand{\Red}[1]{\mathrm{Red}_\mathcal{F}(#1)}
\newcommand{\Redf}{\mathrm{Red}_\mathcal{F}}
\newcommand{\colim}{\mathop {\rm colim}}
\newcommand{\kk}{k}
\newcommand{\id}{\mathrm{id}}
\newcommand{\Redu}[1]{#1_\mathcal{F}}
\renewcommand{\ker}[1]{\mathrm{ker}#1}
\newcommand{\im}[1]{\mathrm{im}#1}

\numberwithin{equation}{section}

\begin{document}
\title{Purity in compactly generated derivators and t-structures with Grothendieck hearts}
\date{}
\author{Rosanna Laking}
\address{Dipartimento di Informatica, Settore di Matematica, Universit\'a degli Studi di Verona, Strada le Grazie 15, Ca' vignal 2, I-37134 Verona, Italy }
\email{rosanna.laking@univr.it}
\thanks{The author was supported by the DFG SFB / Transregio 45 and by the Max Planck Institute for Mathematics.}
\keywords{t-structure, cosilting, cotilting, purity, definable, reduced product, derivator, homotopically smashing, locally coherent, locally noetherian}
\subjclass[2010]{18E15; 18E30; 03C20}
\maketitle

\begin{abstract}
We study t-structures with Grothendieck hearts on compactly generated triangulated categories $\T$ that are underlying categories of strong and stable derivators.  This setting includes all algebraic compactly generated triangulated categories. We give an intrinsic characterisation of pure triangles and the definable subcategories of $\T$ in terms of directed homotopy colimits.  For a left nondegenerate t-structure ${\bf t}=(\mathcal{U},\mathcal{V})$ on $\T$, we show that $\mathcal{V}$ is definable if and only if ${\bf t}$  is smashing and has a Grothendieck heart.  Moreover, these conditions are equivalent to ${\bf t}$ being homotopically smashing and to ${\bf t}$ being cogenerated by a pure-injective partial cosilting object.  Finally, we show that finiteness conditions on the heart of ${\bf t}$ are determined by purity conditions on the associated partial cosilting object.
\end{abstract}

\section{Introduction.}

Triangulated categories arising in representation theory, algebraic geometry and topology often come with the additional structure of a t-structure \cite{BBD}, allowing one to carry out homological algebra with respect to this t-structure.  A t-structure ${\bf t}$ on a triangulated category $\T$ is a torsion pair satisfying additional properties ensuring that there exists an abelian subcategory $\mathcal{G}$ of $\T$, called the heart, and a cohomological functor from $\T$ to $\mathcal{G}$.  

The question of when the heart of a t-structure is a Grothendieck category is a natural one and has been pursued by many authors using a variety of techniques.  Given the scarcity of limits and colimits in $\T$, a necessary ingredient in solving this problem is a method for understanding how directed colimits might look in $\mathcal{G}$.  The various approaches to this problem tend to follow one of two general strategies:\begin{description}
\item[Strategy 1] Consider $\T$ as a subcategory of a Grothendieck category $\mathcal{A}(\T)$ and understand directed colimits in $\mathcal{G}$ in terms of directed colimits in $\mathcal{A}(\T)$ (\cite{AMV}, \cite{Bazz}, \cite{MV} and \cite{MR3239134}).
\item[Strategy 2]  Consider $\T$ as the underlying category of some higher categorical structure and understand directed colimits in $\mathcal{G}$ in terms of directed homotopy colimits (\cite{HA}, \cite{SSV}).
\end{description}

Strategy 1 is most effective in the setting where $\T$ is generated by its subcategory of compact objects $\Tc$. In this case $\mathcal{A}(\T)$ is the category $\Mod{\Tc}$ and we have the well-developed theory of purity available to us  (see, for example, \cite{MR1728877, MR1754234}).    

In this paper we combine Strategies 1 and 2: we consider the case where $\T$ is compactly generated and is also the underlying category of a strong and stable derivator.  By \cite[Thm.~1.36]{Der} and \cite{Keller}, this includes all algebraic compactly generated triangulated categories.  We observe that, in this setting, the strategies above are completely compatible: the image in $\Mod{\Tc}$ of each directed homotopy colimit corresponds to an appropriate directed colimit in $\Mod{\Tc}$ (see Remark \ref{Rem: hocolim to dir lim}).  

From this starting point, we are able to characterise important notions from the theory of purity in terms of certain homotopy colimits, which we call coherent ultrapowers, inspired by their use in model theory; see Section \ref{Sec: Red Prod} for details.  These results are analogous to well-known characterisations of purity in module categories (see, for example, \cite[Thm.~4.2.18, Thm.~16.1.16]{MR2530988} and \cite[Sec.~2.3]{MR1648602}) and generalise \cite[Thm.~7.5]{MR1899046}.  

\begin{theorem*}[{Proposition \ref{Prop: pure tri}, Theorem \ref{Thm: def cats}}] Let $\T$ be a compactly generated triangulated category and suppose that $\T$ is the underlying category of a strong and stable derivator. Then the following statements hold.\begin{enumerate}
\item A triangle $\delta \colon X \rightarrow Y \rightarrow Z \rightarrow \Sigma X$ is a pure triangle if and only if there is some coherent ultrapower of $\delta$ that is a split triangle.
\item A full subcategory of $\T$ is definable if and only if it is closed under direct products, directed homotopy colimits and pure subobjects.
\end{enumerate}
\end{theorem*}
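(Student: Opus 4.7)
The plan is to reduce both statements to their classical analogues in the module category $\Mod{\Tc}$ by means of the restricted Yoneda embedding $\y \colon \T \to \Mod{\Tc}$. By definition, a triangle $\delta$ is pure precisely when $\y(\delta)$ is a pure-exact sequence in $\Mod{\Tc}$, and by Remark \ref{Rem: hocolim to dir lim} the functor $\y$ identifies directed homotopy colimits in $\T$ with direct limits in $\Mod{\Tc}$. In particular, the coherent ultrapower of $\delta$ constructed in Section \ref{Sec: Red Prod} should map under $\y$ onto the classical reduced product of $\y(\delta)$ by the same filter, which allows reduced-product characterisations of pure-exact sequences to be transported into $\T$.

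For part (1), the backward direction is the easier one: if some coherent ultrapower of $\delta$ is split in $\T$, then its $\y$-image is a split sequence in $\Mod{\Tc}$, and $\y(\delta)$ embeds as a pure subobject of this split sequence, whence $\y(\delta)$ is pure-exact and $\delta$ is pure. For the forward direction, I would appeal to the classical result \cite[Thm.~4.2.18]{MR2530988}: if $\y(\delta)$ is pure-exact, then there exists a filter $\mathcal{F}$ for which the reduced product $\y(\delta)_\mathcal{F}$ splits in $\Mod{\Tc}$. The real work is to lift this splitting to the coherent ultrapower of $\delta$ in $\T$. The idea is to use the strong and stable derivator structure to assemble a coherent diagram whose hocolim is the ultrapower; then a compatible family of splittings of the underlying products of $\delta$, provided by the classical splitting criterion together with the Yoneda embedding on compact objects, should extend along the diagram to yield a hocolim-level section in $\T$.

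For part (2), the forward implication is standard: a definable subcategory is cut out by the vanishing of a set of coherent functors, and such vanishing is automatically stable under products, pure subobjects, and, via the $\y$-compatibility, directed homotopy colimits. For the converse, suppose $\mathcal{D} \subseteq \T$ is closed under all three operations. Then $\y(\mathcal{D})$ is closed in $\Mod{\Tc}$ under direct products and direct limits; closure under pure submodules follows from part (1), since any pure-exact subsequence of $\y(X)$ for $X \in \mathcal{D}$ can be recovered as a pure subobject in $\T$ by realising its splitting after a coherent ultrapower and then using closure of $\mathcal{D}$ under directed hocolims and pure subobjects. By the classical Crawley--Boevey characterisation of definable subcategories of $\Mod{\Tc}$, the closure of $\y(\mathcal{D})$ under isomorphisms is definable, and pulling back along $\y$ shows $\mathcal{D}$ is itself definable in $\T$.

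The main obstacle is the forward direction of part (1): lifting the splitting of the classical reduced product $\y(\delta)_\mathcal{F}$ to a genuine splitting of the coherent ultrapower of $\delta$ in $\T$. Since $\y$ is not faithful in general, as phantom maps can obstruct naive descent, this step depends on the coherence of the ultrapower construction within the derivator, exploiting strongness to transfer the diagrammatic splitting data from $\Mod{\Tc}$ back into $\T$. Once this lifting is established, the remaining ingredients, namely the transport of products, direct limits, pure subobjects, and the classical module-theoretic purity theorems, slot into place via the $\y$-compatibility recorded at the outset.
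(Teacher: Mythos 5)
The overall strategy—reduce to $\Mod{\Tc}$ via the restricted Yoneda functor $\y$ and use the compatibility $\y\circ\hocolimf{I}\cong\varinjlim_I\circ\y$ of Remark~\ref{Rem: hocolim to dir lim}—is the right one and matches the paper. But both parts of your argument contain gaps at precisely the points you flag as needing work, and in each case the paper resolves them by a different, cleaner mechanism that you do not identify.

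For part~(1), you correctly isolate the obstacle: lifting the splitting of $(\y\delta)^S/\mathcal{F}$ in $\Mod{\Tc}$ to a splitting of the coherent ultrapower of $\delta$ in $\T$. Your proposed resolution---assemble ``a compatible family of splittings of the underlying products of $\delta$'' and extend this along the diagram to a hocolim-level section---would require each finite-stage product to split compatibly, which there is no reason to expect: the individual products $X^P\to Y^P\to Z^P$ need not split at all, let alone compatibly. The paper's argument avoids this entirely. One observes that $\y X$ is fp-injective (\cite[Lem.~1.6]{MR1728877}), so $\y\delta$ is automatically pure-exact and the Prest criterion (\cite[Thm.~16.1.16]{MR2530988}) applies; moreover, the ultrapower can be chosen so that all three terms of $(\y\delta)^S/\mathcal{F}$ are \emph{pure-injective} in $\Mod{\Tc}$. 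Then the three terms of the coherent ultrapower of $\delta$ are pure-injective objects of $\T$, and by Proposition~\ref{prop: pure-triangle}(3) the map $\Hom{\T}(-,E)\to\Hom{\Mod{\Tc}}(\y-,\y E)$ is an isomorphism for pure-injective $E$. This is exactly what makes $\y$ reflect the splitting—no coherent descent argument is needed, and the phantom-map problem you worry about simply does not arise on pure-injectives.

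For part~(2), the direction $(2)\Rightarrow(1)$ in your sketch is not sound as stated. It is not true that $\y(\mathcal{D})$ is closed under pure submodules in $\Mod{\Tc}$ (most pure submodules of $\y X$ are not of the form $\y Y$), and ``the closure of $\y(\mathcal{D})$ under isomorphisms'' is not a definable subcategory of $\Mod{\Tc}$. Nor is it clear why ``pulling back along $\y$'' would recover $\mathcal{D}$ rather than some larger class. The paper's actual argument proceeds differently: it shows $\Def{\T}{\mathcal{D}}\subseteq\mathcal{D}$ by first treating pure-injective objects $E\in\Def{\T}{\mathcal{D}}$ (every such $\y E$ is a split subobject of some reduced product $\prod\y D_s/\mathcal{F}$ with $D_s\in\mathcal{D}$, by \cite[Cor.~4.10]{exactly}; transporting back via Remark~\ref{Rem: hocolim to dir lim} gives a pure epi $\Redu{D}\to E$, so $E\in\mathcal{D}$), and then observing that an arbitrary $X\in\Def{\T}{\mathcal{D}}$ is a pure subobject of a pure-injective in $\Def{\T}{\mathcal{D}}$. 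This two-step ``pure-injectives first, then pass to pure subobjects'' structure is essential and is missing from your argument.
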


Using the interaction between purity and homotopy colimits, we are able to show that the natural class of (left nondegenerate) t-structures with Grothendieck hearts considered via Strategy 1 (those cogenerated by pure-injective cosilting objects) coincides with the class considered via Strategy 2 (homotopically smashing t-structures).  That is, we introduce the notion of partial cosilting t-structures and prove the following theorem (which specialises to the case of nondegenerate and cosilting t-structures).

\begin{theorem*}[{Theorem \ref{Thm: groth hearts}}]
Let $\T$ be a compactly generated triangulated category and suppose that $\T$ is the underlying category of a strong and stable derivator.  Let ${\bf t} = (\mathcal{U}, \mathcal{V})$ be a left nondegenerate t-structure on $\T$.  Then the following statements are equivalent: \begin{enumerate}
\item ${\bf t} $ is a partial cosilting t-structure with a pure-injective partial cosilting object $C$.
\item $\mathcal{V}$ is definable.
\item ${\bf t} $ is homotopically smashing.
\item ${\bf t} $ is smashing and the heart $\mathcal{G}$ is a Grothendieck category.
\end{enumerate}
\end{theorem*}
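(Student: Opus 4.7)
The plan is to prove the cycle $(1) \Rightarrow (2) \Rightarrow (3) \Rightarrow (4) \Rightarrow (1)$, using the previous theorem's characterization of definable subcategories as the principal technical lever. For $(1) \Rightarrow (2)$, I would express $\mathcal{V}$ as a Hom-orthogonal involving the partial cosilting object $C$ (vanishing of $\Hom{\T}(-, C[i])$ for positive $i$, together with a membership condition built from $C$). Since $C$ is pure-injective, each functor $\Hom{\T}(-, C[i])$ sends pure triangles to exact sequences and directed homotopy colimits to limits of abelian groups, while contravariant Hom automatically sends coproducts to products. Thus $\mathcal{V}$ is closed under direct products, directed homotopy colimits, and pure subobjects, and is therefore definable by the previous theorem. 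The step $(2) \Rightarrow (3)$ is then immediate, since definable subcategories are closed under directed homotopy colimits, which is the defining property of a homotopically smashing t-structure.

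For $(3) \Rightarrow (4)$, smashing follows because arbitrary coproducts can be realized as directed homotopy colimits of finite coproducts, so $\mathcal{V}$ is automatically closed under coproducts. The Grothendieck property of $\mathcal{G}$ is the substantive part and would follow the method of \cite{SSV}: the strong and stable derivator lifts filtered diagrams in $\mathcal{G}$ to coherent diagrams in $\T$, and homotopic smashing ensures that the cohomological functor $H^0$ commutes with the resulting directed homotopy colimits, yielding exactness of filtered colimits in $\mathcal{G}$ (the AB5 axiom). Compact generation of $\T$, combined with left nondegeneracy of $\mathbf{t}$, then provides a generator for $\mathcal{G}$.

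For $(4) \Rightarrow (1)$, I would choose an injective cogenerator $E$ of $\mathcal{G}$ and lift it to an object $C \in \mathcal{V}$ via the t-structure. Smashing together with the fact that $E$ cogenerates $\mathcal{G}$ should identify $\mathcal{V}$ with the appropriate Hom-orthogonal to positive shifts of $C$, exhibiting $C$ as a partial cosilting object in the required sense. Pure-injectivity of $C$ is then extracted by transporting injectivity of $E$ across the correspondence between directed homotopy colimits in $\T$ and direct limits in $\Mod{\Tc}$ recorded in Remark \ref{Rem: hocolim to dir lim}: a pure triangle in $\T$ yields a pure short exact sequence in $\Mod{\Tc}$ whose image under $\Hom{\T}(-, C)$ can be related, via $H^0$, to applying $\Hom{\mathcal{G}}(-, E)$ to the corresponding sequence in $\mathcal{G}$, which is exact by injectivity of $E$.

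The main obstacle is the two-way exchange between Strategies 1 and 2 in $(3) \Rightarrow (4)$ and $(4) \Rightarrow (1)$: one must extract the AB5 property of $\mathcal{G}$ from directed homotopy colimits in $\T$, and conversely deduce pure-injectivity of a lift of an injective cogenerator of $\mathcal{G}$. Both depend crucially on the compatibility between directed homotopy colimits in $\T$ and direct limits in $\Mod{\Tc}$ that forms the conceptual core of the paper.
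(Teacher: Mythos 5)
Your proposal follows the same cycle $(1)\Rightarrow(2)\Rightarrow(3)\Rightarrow(4)\Rightarrow(1)$ as the paper, and it relies on the same ingredients: the characterisation of definable subcategories (Theorem \ref{Thm: def cats}) for $(1)\Rightarrow(2)$ and $(2)\Rightarrow(3)$, \cite[Thm.~A]{SSV} plus a generating set built from compacts for $(3)\Rightarrow(4)$, and the lift of an injective cogenerator of $\mathcal{G}$ as in \cite[Thm.~3.6]{AMV} for $(4)\Rightarrow(1)$. This matches the paper's Lemmas \ref{Lem: pure inj oth def}, \ref{Lemma: homo smash implies Grothendieck} and \ref{Lem: weak AMV}.

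There is one slip worth fixing in the $(1)\Rightarrow(2)$ step. You justify closure of $\mathcal{V}$ under direct products by noting that contravariant $\Hom{\T}(-,C[i])$ sends coproducts to products. That observation shows that if all $X_i\in\mathcal{V}$ then $\coprod X_i\in\mathcal{V}$ --- closure under \emph{coproducts}, i.e.\ the smashing condition --- not closure under \emph{products}, which is what Theorem \ref{Thm: def cats} requires. Since $\Hom{\T}(-,C[i])$ does not in general turn products in the source into coproducts, this line of reasoning does not give what you need. The correct (and elementary) argument, which is the one implicit in Lemma \ref{Lem: pure inj oth def}, is that $\mathcal{V}$ is the coaisle of a t-structure: the inclusion $\mathcal{V}\hookrightarrow\T$ has a left adjoint $\tau_{\mathcal{V}}$, hence $\mathcal{V}$ is closed under all limits that exist in $\T$, in particular under products. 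The remaining closure conditions you state for this step (pure subobjects via pure-injectivity of $C[i]$, directed homotopy colimits via $\y$-fullness of pure-injectives and Remark \ref{Rem: hocolim to dir lim}) are correct. One further small remark: left nondegeneracy is not needed for the construction of generators in $(3)\Rightarrow(4)$ --- it enters only in $(4)\Rightarrow(1)$, where it is used to identify $\mathcal{V}$ with $^{\perp_{>0}}C$.
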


In order to place this work in context, let us give a brief summary of the preceding results concerning Grothendieck hearts.  In \cite{MR3336001, MR3448805} the authors consider t-structures in the derived category $\D{\mathcal{H}}$ of a Grothendieck category $\mathcal{H}$ induced by torsion pairs in $\mathcal{H}$ i.e.~Happel-Reiten-{Smal\o} (HRS) t-structures \cite{HRS}. They show that the heart of a HRS t-structure ${\bf t}$ is Grothendieck if and only if the torsion-free class in $\mathcal{H}$ is closed under directed colimits.  When $\mathcal{H}$ is a module category, these are exactly the torsion-free classes of the form $\Cogen{C}$ for cosilting modules $C$ (see \cite[Cor.~3.9]{abundance}).

The question of when the heart of a t-structure is a Grothendieck category has also been considered in the context of silting theory.  The t-structure induced by a large tilting module $T$ has a Grothendieck heart exactly when $T$ is pure-projective \cite[Thm.~7.5]{Bazz} and the t-structure induced by a large cotilting module always has a Grothendieck heart \cite{MR3239134}.  A more general version of these results can be found in \cite[Thm.~3.6, Thm.~3.7]{AMV} in the context of compactly generated triangulated categories: the authors show that a nondegenerate cosmashing (respectively smashing) t-structure has a Grothendieck heart if and only if it is (co)generated by a pure-projective (respectively pure-injective) silting (respectively cosilting) object.  The result related to cosilting can also be found in \cite[Prop.~2]{NSZ} in the case where the t-structure is assumed to be smashing and cosmashing.  Related to the silting t-structures is the case of compactly generated t-structures; these have been shown to have Grothendieck hearts in various settings, for example the homotopy category of a combinatorial stable model category \cite[Cor.~D]{SSV} \cite[Thm.~0.2]{Bon}.

Finally, in both \cite{SSV} and \cite{HA}, the authors consider t-structures such that the coaisle is closed under homotopy colimits, we shall refer to such t-structures as homotopically smashing.  In \cite[Thm.~B]{SSV}, it is shown that for any (strong and stable) derivator this assumption is enough to ensure the heart has exact directed colimits.  When, in addition, the t-structure is on the homotopy category of a stable combinatorial model category, the authors prove that the heart is a Grothendieck category.  In a similar vein, Lurie considers homotopically smashing t-structures with accessible aisles in the context of presentable stable $\infty$-categories \cite[Rem.~1.3.5.23]{HA} and observes that also this implies that the heart is a Grothendieck category.

The question of when hearts satisfy various finiteness conditions has also been considered in the literature. For example in \cite{MR3613439} HRS t-structures with locally coherent hearts are characterised and in \cite{MR2577659} locally noetherian $1$-cotilting t-structures are shown to be induced by $\Sigma$-pure-injective $1$-cotilting modules. In the final section we consider the case where the heart $\mathcal{G}$ has an internal notion of purity and we investigate how the purity in $\T$ interacts with the purity in $\mathcal{G}$.  As an application, we generalise the above results by characterising finiteness conditions on $\mathcal{G}$ in terms of purity assumptions on the corresponding partial cosilting object.   

\begin{theorem*}[{Proposition \ref{Prop: loc noeth}, Proposition \ref{Prop: ele cogen loc coh}, Theorem \ref{Thm: loc coh ele cogen}}]
Let $\T$ be a compactly generated triangulated category and suppose that $\T$ is the underlying category of a strong and stable derivator. Let ${\bf t} = (\mathcal{U}, \mathcal{V})$ be a partial cosilting t-structure on $\T$ with partial cosilting object $C$.  Then the following statements hold: \begin{enumerate}
\item If $C$ is $\Sigma$-pure-injective, then the heart $\mathcal{G}$ is locally noetherian.
\item If $C$ is an elementary cogenerator, then the heart $\mathcal{G}$ is locally coherent.
\end{enumerate} If $\Prod{C}$ is contained in $\mathcal{G}$, then the converse statements hold. 
\end{theorem*}

Note that Example \ref{ex: counter-example} (communicated to the author by Michal Hrbek) shows that the converse statements need not hold when $\Prod{C}$ is not contained in $\mathcal{G}$. This statement differs from the original version of this article.

We end this introduction with a summary of the content of the paper.  In Section \ref{Sec:der}, we introduce the basic definitions and notation related to the theory of derivators, as well as the definition of the coherent reduced products and coherent ultraproducts (see Section \ref{Sec: Red Prod}).  The construction and proof that coherent reduced products exist is contained in Appendix \ref{App: Proof}.  In Section \ref{Sec: Purity} we consider purity in strong and stable derivators whose underlying category is compactly generated; we prove that pure triangles can be detected using coherent ultraproducts and that definable subcategories can be characterised via closure conditions.  Section \ref{Sec: GrothHearts} is concerned with t-structures whose hearts are Grothendieck categories.  We introduce partial cosilting t-structures and homotopically smashing t-structures and in Theorem \ref{Thm: groth hearts} we show that the left nondegenerate t-structures with these properties coincide.  We end the section with an example of a t-structure satisfying the equivalent statements of Theorem \ref{Thm: groth hearts}.  The final section is dedicated to understanding how purity in the triangulated category relates to purity in the heart.  We use this to characterise finiteness conditions on the heart in terms of properties of the cosilting object.

\begin{acknowledgements}
The author would like to thank Moritz Groth and Gustavo Jasso for many interesting and helpful conversations about derivators and other higher categorical structures.  She would like to thank Lidia Angeleri H\"ugel, Frederik Marks and Jorge Vit\'oria for discussions regarding the definition of partial cosilting, which also led to the contents of Example \ref{Ex: recollement}.  Particular thanks are extended to Prof.~Angeleri H\"ugel for her ongoing support of this project.  

Thank you also to Michal Hrbek for pointing out a mistake in Proposition \ref{Prop: loc noeth} in the original (published) version of this article.  The current version contains the corrected proof and the counter-example to the original statement.
\end{acknowledgements}

\section{Derivators.}\label{Sec:der}

The main concept we use from the theory of derivators is that of homotopy limits and colimits.  Their definition is a generalisation of limits and colimits in a category $\mathcal{C}$. That is, for a small category $A$, the functors $\lim_A$ and $\colim_A$ arise as right and left adjoints to the constant diagram functor $\Delta_A \colon \mathcal{C} \rightarrow \mathcal{C}^A$.  The diagram categories $\mathcal{C}^A$ are the values of the contravariant 2-functor $y_\mathcal{C}$ from the 2-category $\Cat$ of small categories to the 2-category $\CAT$ of all categories defined on objects by $A \mapsto \mathcal{C}^A$.  Accordingly, we define a \textbf{prederivator} to be a 2-functor $\Der \colon \Cat^\op \rightarrow \CAT$, the values of which are referred to as \textbf{coherent diagram categories}.   Moreover, the functors $\Delta_A$ are the values $y_\mathcal{C}(\pi_A)$ where $\pi_A$ is the unique functor from $A$ to the category $\base$ with a single object and only the identity morphism.  A \textbf{derivator} is then defined to be a prederivator $\Der \colon \Cat^\op \rightarrow \CAT$ satisfying the axioms (Der1)-(Der4) (see Section \ref{Sec: axioms}).  Crucially, the axiom (Der3) implies that the functors $\Der(\pi_A)$ always have a right adjoint $\holimf{A}$ and a left adjoint $\hocolimf{A}$.

\subsection{A brief introduction to derivators.} 

In this section we give an overview of the definitions related to derivators that we need in later sections.  In order to make the definitions more concrete, we use the derivator associated to the unbounded derived category of a ring (described below in Example \ref{Ex: derived category}) to illustrate each one.  The axioms (Der1)-(Der4) and an explanation of shifted derivators are contained in Appendix \ref{App: axioms}.

\subsubsection{Basic terminology.}

Throughout the paper we use the following basic terminology and notation for a prederivator $\Der \colon \Cat^\op \rightarrow \CAT$: \begin{itemize}
\item  Let $\base$ denote the category with a single object and its identity morphism.  This is a terminal object in $\Cat$ and, for each small category $A$, we denote the unique arrow $A \rightarrow \base$ by $\pi_A$ (or $\pi$ if it is unambiguous). The category $\Der(\base)$ is referred to as the \textbf{underlying category of $\Der$}.

\item The category $\Der(A)$ for each object $A$ in $\Cat$ is called the \textbf{coherent diagram category} and the objects of $\Der(A)$ are called \textbf{coherent diagrams of shape $A$}. The objects of the category $\Der(\base)^A$ are referred to as the \textbf{incoherent diagrams of shape $A$}.

\item Let $\mathbf{2}$ denote the poset $\{0 < 1\}$ considered as a category.  The category $\Der(\base)^{\mathbf{2}}$ consists of the morphisms in $\Der(\base)$ and is referred to as the \textbf{category of incoherent morphisms}.  The category $\Der(\mathbf{2})$ is referred to as the category of \textbf{coherent morphisms}.

\item The functor $\Der(u)$ for each functor $u \colon A \rightarrow B$ in $\Cat$ is called the \textbf{restriction functor} and is denoted by $u^*$.  Similarly, the natural transformation $\Der(\alpha)$ is denoted by $\alpha^*$ for each natural transformation $\alpha \colon u \Rightarrow v$ in $\Cat$.
\end{itemize} 

\begin{example}\label{Ex: derived category}
Let $R$ be a ring and let $\mathcal{A}$ denote the category $\Mod{R}$ of right $R$ modules.  There is a prederivator $\Der_R \colon \Cat^\op \rightarrow \CAT$ such that: \begin{enumerate}
\item For each small category $A$, the category $\Der_R(A)$ is the unbounded derived category $\D{\mathcal{A}^A}$.

\item For each functor $u \colon A \to B$, the functor $u^* \colon \D{\mathcal{A}^B}\rightarrow \D{\mathcal{A}^A}$ is induced from the exact functor $\mathcal{A}^B \rightarrow \mathcal{A}^A$.  
\end{enumerate}

\noindent This is a 2-functor since every natural transformation $\alpha \colon u \Rightarrow v$ induces a natural transformation $\alpha^* \colon \Der_R(u) \Rightarrow \Der_R(v)$. The underlying category $\Der_R(\base)$ of $\Der_R$ is equivalent to $\DModR$.
\end{example}

For each functor $u \colon A \rightarrow B$, if the restriction functor $u^* \colon \Der(B) \rightarrow \Der(A)$ has a right adjoint, then it is denoted $u_* \colon \Der(A) \rightarrow \Der(B)$.  Similarly, if it has a left adjoint, then it is denoted $u_! \colon \Der(A) \rightarrow \Der(B)$.  These are referred to as right and left \textbf{Kan extensions}.

\subsubsection{Underlying diagram functors.}\label{Sec: dia}

Next we will describe some distinguished restriction functors that will be used frequently in the subsequent sections: \begin{itemize}
\item Let $a$ be an object in a small category $A$ and let $a \colon \base \rightarrow A$ denote the unique functor mapping the object in $\base$ to $a$.  Then the restriction functor $a^* \colon \Der(A) \rightarrow \Der(\base)$ is called the \textbf{evaluation functor at $a$}.  For an object $X$ in $\Der(A)$, the image of $X$ under $a^*$ is called the \textbf{value of $X$ at $a$} and is denoted by $X_a$.  

\item For every morphism $f \colon a \rightarrow b$ in $A$, let $f \colon a\Rightarrow b$ denote the natural transformation from $a$ to $b$ with the unique component given by $f$.  For each object $X$ in $\Der(A)$ we denote the component of $f^* \colon a^* \Rightarrow b^*$ at $X$ by $X_f \colon X_a \to X_b$; this is called the \textbf{value of $X$ at $f$}.
\end{itemize}

We define the \textbf{underlying diagram functor} $\dia{A} \colon \Der(A) \rightarrow \Der(\base)^A$ for each small category $A$ in the following way: \begin{itemize}
\item For each object $X$ in $\Der(A)$ we assign the object $\dia{A}(X) \colon A \rightarrow \Der(\base)$ of $\Der(\base)^A$ such that $a \mapsto X_a$ and $f \mapsto X_f$.
\item For each morphism $g \colon X \rightarrow Y$ in $\Der(A)$ we assign the morphism $\dia{A}(g) \colon \dia{A}(X) \rightarrow \dia{A}(Y)$ in $\Der(\base)^A$ given by $\dia{A}(g)_a := g_a \colon X_a \rightarrow Y_a$ for each object $a$ in $A$.
\end{itemize}

\begin{example}\label{Ex: dia R} Let $X$ be an object of $\Der_R(A)$ for a small category $A$.  We may consider an object $X'$ in $\Ch{\mathcal{A}^A} \simeq \Ch{\mathcal{A}}^A$ that maps to $X$ under the localisation functor.  Consider $X'$ as an $A$-shaped diagram in $\Ch{\mathcal{A}}^A$, we may postcompose with the localisation functor to obtain an object of $\Der_R(\base)^A$.  The assignment is well-defined and extends to the functor $\dia{A}\colon \Der_R(A) \rightarrow \Der_R(\base)^A$.

In general, the category $\Der_R(\base)^A$ of incoherent diagrams is not equivalent to the category $\Der_R(A)$ of coherent diagrams.  For example, if $\kk$ is a field then $\Der_\kk(\base)$ is abelian and hence $\Der_\kk(\base)^A$ is abelian for any $A$.  In contrast, $\Der_\kk(\mathbf{2})$ is the derived category of $\kk$-representations of the quiver $\bullet \rightarrow \bullet$, which is clearly not abelian.
\end{example}

\subsubsection{Homotopy limits and homotopy colimits.}\label{Sec: holim}

For every small category $A$ consider the unique functor $\pi = \pi_A \colon A \rightarrow \base$.  We denote the left adjoint $\pi_!$ of $\pi^*$ by $\mathrm{hocolim}_{A} \colon \Der(A) \rightarrow \Der(\base)$ and refer to it as the \textbf{homotopy colimit functor}.  Similarly, we denote the right adjoint $\pi_*$ of $\pi^*$ by $\mathrm{holim}_{A} \colon \Der(A) \rightarrow \Der(\base)$ and refer to it as the \textbf{homotopy limit functor}.  For an object $X$ in $\Der(A)$, we refer to $\hocolim{A}{X}$ as the \textbf{homotopy colimit of $X$} and to $\holim{A}{X}$ as the \textbf{homotopy limit of $X$}.

\begin{example}\label{Ex: lim is lim}
Consider a directed category $I$ and the unique functor $\pi \colon I \rightarrow \base$. Following \cite[Prop.~6.6]{MR3239134}, we can describe the left Kan extension $(\pi_I)_!$ with respect to $\Der_R$ explicitly. The colimit functor $\varinjlim \colon \Ch{R}^I \rightarrow \Ch{R}$ and the constant diagram functor $\Delta_I \colon \Ch{R} \rightarrow \Ch{R}^I$ form an adjoint pair of exact functors.  Hence they induce a pair of adjoint functors $\varinjlim \colon \Der_R(I) \rightarrow \Der_R(\base)$ and $\pi^* \colon \Der_R(\base) \rightarrow \Der_R(I)$.  Left adjoint functors are unique up to equivalence and so we conclude that $\varinjlim_I \cong \hocolimf{I}$.    

As products are exact in $\Mod{R}$, similar reasoning may be applied to the direct product functor on $\Ch{R}$ to obtain an example of a homotopy limit functor.
\end{example}

\subsubsection{Strong and stable derivators.}

For small categories $A$ and $B$, we define the \textbf{partial underlying diagram functor} \[\dia{B,A} \colon \Der(A\times B) \rightarrow \Der(A)^B\] to be the underlying diagram functor $\dia{B}^{\Der^A} \colon \Der^A(B) \rightarrow \Der^A(\base)^{B}$ with respect to the shifted derivator $\Der^A$ (which is defined in Section \ref{Sec: shifted}).  A derivator is called \textbf{strong} if the partial underlying diagram functor $\dia{F,A}$ is full and essentially surjective for every small category $A$ and every finite free category $F$. 

\begin{example}
The derivator $\Der_R$ is strong and so the functor $\dia{\mathbf{2}} \colon \Der_R(\mathbf{2}) \rightarrow \Der_R(\base)^{\mathbf{2}}$, described in Example \ref{Ex: dia R}, is full and essentially surjective. We may therefore replace any incoherent morphism with the underlying diagram of a coherent morphism.  
\end{example}
Another important property of $\Der_R$ is that, for every small category $A$, the category $\Der_R(A)$ is a triangulated category.  This is a consequence of $\Der_R$ being \textbf{stable} as well as strong.  In this article we do not give the definition of a stable derivator; instead, we refer the reader to \cite{Der}.  In \cite[Sec.~4.2]{Der} both a canonical endofunctor $\Sigma_A^\Der \colon \Der(A) \to \Der(A)$ and a class of distinguished triangles $\triangle_A^\Der$ is defined for each strong stable derivator $\Der$ and each small category $A$.  It turns out that this defines a canonical triangulated structure on $\Der(A)$.  

Another particularly important property of $\Der$ is that the functors $u^*$, $u_*$, $u_!$ preserve the canonical triangulated for each $u \colon A \to B$.  We call an additive functor $F \colon \T \rightarrow \mathcal{T}'$ between triangulated categories \textbf{exact} if there exists a natural isomorphism $\eta \colon F \circ \Sigma \rightarrow \Sigma \circ F$ and for every triangle $X \overset{f}{\rightarrow} Y \overset{g}{\rightarrow} Z \overset{h}{\rightarrow} \Sigma X$ in $\T$ we have that \[FX \overset{Ff}{\longrightarrow} FY \overset{Fg}{\longrightarrow} FZ \overset{\eta_X \circ Fh}{\longrightarrow} \Sigma(FX)\] is a triangle in $\mathcal{T}'$.  

We summarise the above discussion in the following proposition.  The first statement can be found in \cite[Thm.~4.16]{Der} and the second can be found in \cite[Sec.~10]{Groth}.

\begin{proposition} \label{Prop: strong and stable}
Let $\Der$ be a strong and stable derivator.  Then the following statements hold. \begin{enumerate}
\item For any small category $A$, the category $\Der(A)$ has a canonical triangulated structure given by the suspension functor $\Sigma_A^\Der$ and class of distinguished triangles $\triangle_A^\Der$.
\item For any functor $u \colon A \rightarrow B$ in $\Cat$, the functors $u^*$, $u_*$ and $u_!$ are exact functors with respect to the canonical triangulated structure.
\end{enumerate}
\end{proposition}

\subsection{Coherent reduced products.}\label{Sec: Red Prod}

In Section \ref{Sec: Purity} we give intrinsic characterisations of both pure-exact triangles and definable subcategories in the underlying category $\Der(\base)$ of a derivator $\Der$ (assuming that $\Der(\base)$ is compactly generated).  These characterisations mimic those given for a locally finitely presented category $\mathcal{C}$ in \cite[Thm.~16.1.16]{MR2530988} and \cite[Cor.~4.6]{exactly}.  A key tool in both of these cases is the reduced product with respect to a proper filter (see Construction \ref{Con: reduced product}).  In this section we show that, for any derivator $\Der$ and any proper filter, there is a coherent diagram whose underlying diagram is isomorphic to the direct system described in Construction \ref{Con: reduced product}.

Let $S$ be a non-empty set and let $\Pow{S}$ denote the power set of $S$.  Then a \textbf{(proper) filter on $S$} is a non-empty collection $\mathcal{F} \subset \Pow{S}$ of subsets of $S$ (not containing $\emptyset$) such that if $P, Q \in \mathcal{F}$ then $P\cap Q \in \mathcal{F}$ and if $P \in \mathcal{F}$ and $P \subseteq Q$ then $Q \in \mathcal{F}$.  Moreover, a proper filter $\mathcal{F}$ is called an \textbf{ultrafilter} if, for every $P \in \Pow{S}$, either $P \in \mathcal{F}$ or $S\setminus P \in \mathcal{F}$.

\begin{construction}\label{Con: reduced product}
Let $\mathcal{C}$ be a complete category with directed colimits and let $X = \{X_s\}_{s\in S}$ be a set of objects in $\mathcal{C}$.  Given a proper filter $\mathcal{F}$ on $S$, we may define a directed system in $\mathcal{C}$ consisting of objects $\{ \prod_{p\in P} X_p \mid P \in \mathcal{F}\}$ and morphisms $\{ \phi_{PQ} \mid Q \subseteq P\}$ where $\phi_{PQ} \colon \prod_{p \in P} X_p \rightarrow \prod_{q\in Q} X_q$ denotes the canonical projection.  The \textbf{reduced product of $X$} (with respect to $\mathcal{F}$) is the directed colimit \[\prod_{s\in S} X_s/\mathcal{F} := \varinjlim_{P \in \mathcal{F}} \left(\prod_{p\in P} X_p\right).\]  If $\mathcal{F}$ is an ultrafilter, then $\prod_{s\in S} X_s/\mathcal{F}$ is called the \textbf{ultraproduct of $X$} (with respect to $\mathcal{F}$).  If $\mathcal{F}$ is an ultrafilter and $Y^I$ is the $I$-indexed product of copies of $Y$, then $Y^I/\mathcal{F}$ is called the \textbf{ultrapower of $Y$} (with respect to $\mathcal{F}$).
\end{construction}

If $\mathcal{F}$ is a proper filter on a set $S$, then $\mathcal{F}$ is a poset with the relation $P \leq Q$ if and only if $Q \subset P$.  We consider $\mathcal{F}$ as a category and denote the morphisms corresponding to $Q \subseteq P$ by $f_{PQ} \colon P \rightarrow Q$.  We also identify the set $S$ with the discrete category with objects $S$.  Note that we have an equivalence of categories $\Der(S) \simeq \Der(\base)^S$ by (Der1).  

\begin{proposition}\label{Prop: reduced product diagram}
Let $\Der$ be a derivator and let $\mathcal{F}$ be a proper filter on a set $S$.  Then there exists a functor \[ \Redf \colon \Der(S) \rightarrow \Der(\mathcal{F})\] such that, for each diagram $X$ in $\Der(S)$, the following statements hold: \begin{enumerate}
\item The value of $\Red{X}$ at $P$ is isomorphic to $\prod_{p\in P} X_p$ for each $P \in \mathcal{F}$.
\item The value of $\Red{X}$ at $f_{PQ}$ is isomorphic to the canonical projection $\phi_{PQ}$ for each $Q \subseteq P$ in $\mathcal{F}$.
\end{enumerate}  
\end{proposition}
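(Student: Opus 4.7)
The plan is to realise $\Redf$ as a right Kan extension along a projection out of an auxiliary ``Grothendieck construction'' category that assembles the finite products $\prod_{p\in P}X_p$ coherently over $\mathcal{F}$.

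Explicitly, I would define $\mathcal{E}$ to be the subcategory of the product $\mathcal{F}\times S$ with objects $\{(P,s):s\in P\}$ and morphisms $(P,s)\to(Q,t)$ given by those pairs $(f_{PQ},\id_s)$ with $Q\subseteq P$ and $s=t$ (equivalently: a single morphism $(P,s)\to(Q,s)$ whenever $Q\subseteq P$ and $s\in Q$). Writing $p\colon \mathcal{E}\to\mathcal{F}$ and $q\colon\mathcal{E}\to S$ for the two projections, I would set \[\Redf:=p_*\circ q^*\colon\Der(S)\longrightarrow\Der(\mathcal{F}),\] where $p_*$ exists by the derivator axiom (Der3). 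Functoriality is automatic.

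To prove (1), fix $P\in\mathcal{F}$ and apply the pointwise formula from (Der4): there is a canonical isomorphism $(\Red{X})_P\cong\holim{P/p}{r^*q^*X}$, where $P/p$ is the slice with objects $(E,\alpha\colon P\to p(E))$ and $r\colon P/p\to\mathcal{E}$ is the forgetful functor. A direct unpacking identifies $P/p$ with the category whose objects are pairs $(Q,s)$ with $s\in Q\subseteq P$, $Q\in\mathcal{F}$, and whose morphisms $(Q,s)\to(Q',s')$ exist when $s=s'$ and $Q'\subseteq Q$. I would then verify that the inclusion $k\colon P\hookrightarrow P/p$, $s\mapsto(P,s)$, is homotopy initial: for any object $b=(Q,s_0)\in P/p$, the comma category $k/b$ contains the single object $s_0$ (witnessed by the unique morphism $(P,s_0)\to(Q,s_0)$ in $\mathcal{E}$) and is therefore contractible. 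Applying the limit-reduction for homotopy initial functors yields $\holim{P/p}{r^*q^*X}\cong\holim{P}{k^*r^*q^*X}$; since $P$ is discrete and $q\circ r\circ k$ is the inclusion $P\hookrightarrow S$, this further simplifies to $\prod_{s\in P}X_s$ by (Der1).

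For (2), given $Q\subseteq P$, the morphism $f_{PQ}$ induces a functor $Q/p\to P/p$ by precomposing structure maps with $f_{PQ}$. Naturality of (Der4) in the point identifies the morphism $\Red{X}_{f_{PQ}}$ with the map on homotopy limits induced by this functor. Under the identifications from (1), the restriction of $Q/p\to P/p$ to the discrete subcategories $Q\hookrightarrow P/p$ and $P\hookrightarrow P/p$ is the inclusion $Q\hookrightarrow P$, so the induced map on products is precisely the canonical projection $\phi_{PQ}$. I expect the main obstacle to be the combinatorial verification that $k$ is homotopy initial (and its naturality for (2)); this ultimately exploits the fact that $p\colon\mathcal{E}\to\mathcal{F}$ is a Grothendieck fibration, so that the strict fibre over $P$ can stand in for the slice $P/p$ in the homotopy-theoretic computation.
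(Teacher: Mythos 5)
Your construction is correct and takes a genuinely different route from the paper. The paper first embeds $S$ into the full power-set poset $P(S)$ via $l_S\colon s\mapsto\{s\}$, forms $\tilde X=(l_S)_*X$ in $\Der(P(S))$, and only afterwards restricts along $\mathcal{F}\hookrightarrow P(S)$; your construction instead works directly over $\mathcal{F}$ by introducing the ``element-of'' category $\mathcal{E}\subset\mathcal{F}\times S$ and setting $\Redf=p_*q^*$. For the values, the paper's choice makes the slice $(P/l_S)$ literally the discrete set $P$, so Step~1 is immediate from (Der4) and \cite[Prop.~1.7]{Der}; your slice $P/p$ is larger, and you reduce it to the discrete fibre $\mathcal{E}_P\cong P$ via the derivator version of Quillen's Theorem~A (the contractibility of $k/b$, which indeed holds as you compute, and which is the general phenomenon that fibres of a Grothendieck fibration are homotopy initial in the corresponding slices). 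This trades the paper's auxiliary category $P(S)$ for a short cofinality check. For the structure maps, your argument is looser than the paper's: the square comparing $k_Q\colon Q\hookrightarrow Q/p$ with $k_P\colon P\hookrightarrow P/p$ over $Q/p\to P/p$ does \emph{not} commute on the nose, only up to the canonical natural transformation $(P,s)\to(Q,s)$, and one must chase this 2-cell through the identification $\holim_{P/p}\cong\prod_{s\in P}X_s$ to see that the induced map really is $\phi_{PQ}$ rather than something twisted. This is precisely the work the paper's Steps~2--3 do via Lemma~\ref{Lem: Moritz} (limiting cones) and the cosieve/homotopy-exactness argument; you correctly flag it as the remaining obstacle, and it can be carried out, but as written your part~(2) is a sketch rather than a proof. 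On balance, your route avoids the detour through $P(S)$ and Lemma~\ref{Lem: Moritz} at the cost of a cofinality verification and a more delicate naturality argument for the transition maps.
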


Since the proof of Proposition \ref{Prop: reduced product diagram} is reasonably long and technical, we will postpone it until Appendix \ref{App: Proof}.  We refer to $\Red{X}$ as the \textbf{coherent reduced product diagram} of $X$ and we define the \textbf{coherent reduced product} of $X$ to be $\Redu{X} : = \hocolim{\mathcal{F}}{\Red{X}}$.  If $\mathcal{F}$ is an ultrafilter then we will also use the terms \textbf{coherent ultraproduct} and \textbf{coherent ultrapower} where appropriate.

\begin{example}
If $\mathcal{C}$ is a bicomplete category, then the natural 2-functor $y_\mathcal{C} \colon \Cat^\op \to \CAT$ that assigns to a small category $A$ the usual diagram category $\mathcal{C}^A$ is a derivator.  In this case, the coherent reduced product conincides with the classical reduced product described in Construction \ref{Con: reduced product}.
\end{example}

\begin{example}
Let $\Der_R$ be the derivator described in Example \ref{Ex: derived category}.  Let $\mathcal{F}$ be a filter on a set $S$ and let $X$ be in $\Der_R(S)$.  Using the same notation for $X$ when considering it as an object of $\Ch{R}^S$, it follows from Example \ref{Ex: lim is lim} that $\Redu{X} \cong \prod_{s\in S} X_s /\mathcal{F}$, where the reduced product on the right is taken in $\Ch{R}$ and then considered as an object in $\Der_R(\base)$.
\end{example}

\begin{corollary}
If $\Der$ is a strong and stable derivator, then the functors $\Redf$ and $\Redu{(-)}$ are exact functors.
\end{corollary}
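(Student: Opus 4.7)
The plan is to invoke Proposition \ref{Prop: strong and stable} to deduce exactness of $\Redf$ from its construction, and then to derive exactness of $\Redu{(-)}$ as a corollary of the first part. Since the paper defines $\Redu{(-)} = \hocolim{\mathcal{F}}{-} \circ \Redf = (\pi_\mathcal{F})_! \circ \Redf$, once $\Redf$ is known to be exact, the second assertion follows immediately: by Proposition \ref{Prop: strong and stable}(2) the left Kan extension $(\pi_\mathcal{F})_!$ is exact, and a composition of exact functors is exact.

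So the real content is showing that $\Redf$ is exact. First I would look at the construction of $\Redf$ given in Appendix \ref{App: Proof}. From the description in Proposition \ref{Prop: reduced product diagram} of the values of $\Red{X}$ at objects and at morphisms $f_{PQ}$ of $\mathcal{F}$, the natural way to build $\Redf$ (and the way I would expect the appendix to proceed) is via the Grothendieck construction on the functor $\mathcal{F}^\op \to \Cat$ sending $P \in \mathcal{F}$ to the discrete category $P \subseteq S$: denoting the resulting category by $E$, with the obvious projections $p \colon E \to \mathcal{F}$ and $q \colon E \to S$, one has $\Redf \cong p_* \circ q^*$, since $p_*$ computes the fibrewise product (the fibres of $p$ being discrete, hence giving products as homotopy limits) and the transition maps in the homotopy Kan extension are precisely the projections $\phi_{PQ}$ induced by the inclusions $Q \subseteq P$.

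Given this presentation, $\Redf$ is a composition of a restriction functor and a right Kan extension, and by Proposition \ref{Prop: strong and stable}(2) both $q^*$ and $p_*$ are exact in a strong and stable derivator. Hence $\Redf = p_* \circ q^*$ is exact, as required. The coherent reduced product $\Redu{(-)} = (\pi_\mathcal{F})_! \circ \Redf$ is then a composition of three functors each of which is exact by Proposition \ref{Prop: strong and stable}(2), so it too is exact.

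The main obstacle is really just identifying $\Redf$ with the expression $p_* \circ q^*$; this is not a calculation but a matter of reading off the correct construction from the appendix. Once one has that identification, exactness is automatic from the general fact that Kan extensions and restriction functors in a strong and stable derivator are exact. No triangle-chasing or further verification is needed, since the compatibility between Kan extensions and the triangulated structure has already been bundled into Proposition \ref{Prop: strong and stable}.
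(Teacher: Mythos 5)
Your overall strategy matches the paper's: the corollary is a direct consequence of the fact that, by construction, $\Redf$ is a composite of restriction functors and Kan extensions, each of which is exact by Proposition \ref{Prop: strong and stable}(2), and $\Redu{(-)} = \hocolimf{\mathcal{F}} \circ \Redf = (\pi_\mathcal{F})_! \circ \Redf$ adds one more exact functor to the composite. The one place your account diverges from the paper is in the specific decomposition of $\Redf$: the appendix does not use the Grothendieck construction of the functor $P \mapsto P$ on $\mathcal{F}^\op$. Instead it extends first and restricts second, setting $\Redf := u^* \circ (l_S)_*$, where $l_S \colon S \to P(S)$ sends $s \mapsto \{s\}$ into the poset $P(S)$ of all nonempty subsets of $S$, $(l_S)_*$ is the right Kan extension along $l_S$, and $u \colon \mathcal{F} \hookrightarrow P(S)$ is the inclusion of the filter. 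Your proposed formula $p_* \circ q^*$ goes the other way (restrict, then right Kan extend), and while it is plausibly isomorphic to the paper's functor, that identification would itself require an argument you do not supply. For the purposes of this corollary this is immaterial, since either decomposition exhibits $\Redf$ as a composite of a restriction and a right Kan extension, and the exactness conclusion is identical; you flagged yourself that the precise decomposition would have to be read off from the appendix, and your fallback reasoning is sound.
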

\begin{proof}
This is immediate from the proof of Proposition \ref{Prop: reduced product diagram}, as well as Proposition \ref{Prop: strong and stable}.
\end{proof}

\begin{remark}\label{Rem: shift red prod}
For any small category $A$, we may define the shifted derivator $\Der^A \colon \Cat^\op \rightarrow \CAT$ (see Appendix \ref{Sec: shifted}).  So, for any proper filter $\mathcal{F}$ on a set $S$ and any $X$ in $\Der(A)^S$, we define the coherent reduced product of $X$ by considering it as an object of $\Der^A(S)$.  

Moreover, for each object $a$ in $A$, the value of the coherent reduced product of $X$ at $a$ is given by the coherent reduced product of the value of $X$ at $a$.  That is,  by applying \cite[Prop.~2.5]{Der}, we have natural isomorphisms \[\xymatrix{ \Der^A(S) \ar@{}[drr]|-{\cong} \ar[rr]^{\Redf} \ar[d]_{(a\times \id_S)^*} & & \Der^A(\mathcal{F})  \ar@{}[drr]|-{\cong}\ar[rr]^{\hocolimf{\mathcal{F}}} \ar[d]^{(a\times \id_\mathcal{F})^*} & & \Der^A(\base) \ar[d]^{a^*} \\
\Der(S) \ar[rr]_{\Redf} & & \Der(\mathcal{F}) \ar[rr]_{\hocolimf{\mathcal{F}}} & & \Der(\base) 
}\] where the coherent reduced product and homotopy colimit on the top row take place with respect to $\Der^A$ and the coherent reduced product and homotopy colimit on the bottom row take place with respect to $\Der$.

For the sake of clarity, we note that this means that for each $X$ in $\Der^A(S)$ we have isomorphisms \[ (\Redf^{\Der^A}(X))_a^{\Der^\mathcal{F}} \cong \Redf^\Der(X_a^{\Der^S}) \:\: \text{ and } \: \: (\Redu{X}^{\Der^A})_a^\Der \cong \Redu{(X_a^{\Der^S})}^\Der\] where the superscripts indicate which derivator each evaluation, reduced product and homotopy colimit is taken with respect to.  Note that we consider $X$ as an object $\{X_s\}_{s\in S}$ of $\Der(A)^S$ and that $X_a^{\Der^S}$ corresponds to the object $\{(X_s)_a^\Der\}_{s\in S}$ in $\Der(\base)^S$.
\end{remark}

\section{Purity in a compactly generated derivator.}\label{Sec: Purity}

In this section we use the construction of coherent reduced products in Section \ref{Sec: Red Prod} to characterise purity in the underlying category of a compactly generated derivator.

\subsection{Purity in compactly generated triangulated categories.}
We focus on strong and stable derivators $\Der$ for which $\Der(\base)$ is a compactly generated triangulated category.  Let $\T$ be a triangulated category with arbitrary coproducts.  An object $X$ in $\T$ is called \textbf{compact} if the Hom-functor $\Hom{\T}(X,-) \colon \T \to \Ab$ commutes with arbitrary coproducts.  Then $\T$ is \textbf{compactly generated} if the full subcategory $\Tc$ of compact objects in $\T$ is skeletally small and $\Tc$ \textbf{generates} $\T$ i.e.~for every non-zero object $Y$ in $\T$ there exists some $X$ in $\Tc$ such that $\Hom{\T}(X, Y) \neq 0$.

Next we summarise some of the basic notions of purity in a compactly generated category $\T$.  Consider the category $\Mod{\Tc}$ of contravariant additive functors from $\Tc$ to the category $\Ab$ of abelian groups.  We denote the full subcategory of finitely presented functors by $\mod{\Tc}$.  

Let $\y \colon \T \to \Mod{\Tc}$ denote the \textbf{restricted Yoneda functor} which is defined to be \[\y X := \Hom{\T}(-, X)|_{\Tc} \: \:  \text{ and }\: \: \y f := \Hom{\T}(-,f)|_{\Tc}\] for objects $X$ and morphisms $f$ in $\T$.  The functor $\y$ is not always fully faithful but it enables us to consider triangles in $\T$ in terms of exact sequences in $\Mod{\Tc}$:

\begin{itemize} \item A triangle $\delta \colon X \overset{f}{\rightarrow} Y \overset{g}{\rightarrow} Z \rightarrow \Sigma X$ is called a \textbf{pure triangle} if the sequence \[ \y \delta \colon 0 \longrightarrow \y X \overset{\y f}{\longrightarrow} \y Y \overset{\y g}{\longrightarrow} \y Z \longrightarrow 0 \] is exact in $\Mod{\Tc}$.  In this case we refer to $f$ as a \textbf{pure monomorphism} and to $g$ as a \textbf{pure epimorphism}.  
\item An object $E$ is called \textbf{pure-injective} if every pure monomorphism of the form $E \rightarrow X$ is split.  An object $P$ is called \textbf{pure-projective} if every pure epimorphism of the form $X \rightarrow P$ is split.
\end{itemize}

\noindent In fact, the following proposition shows that there is a strong relationship between the injective objects in $\Mod{\Tc}$ and pure-injective objects in $\T$. 

\begin{proposition}[{\cite[Thm.~1.8]{MR1728877}}]\label{prop: pure-triangle}
Let $\T$ be a compactly generated triangulated category.  The following statements are equivalent for an object $E$ in $\T$:
\begin{enumerate}
\item $E$ is pure-injective.
\item $\y E$ is an injective object of $\Mod{\Tc}$.
\item The map $\Hom{\T}(X, E) \rightarrow \Hom{\Mod{\Tc}}(\y X, \y E)$ induced by the functor $\y$ is an isomorphism for all objects $X$ in $\T$.
\end{enumerate}
Similarly, the following statements are equivalent for an object $P$ in $\T$:
\begin{enumerate}
\item $P$ is pure-projective.
\item $\y P$ is a projective object of $\Mod{\Tc}$.
\item The map $\Hom{\T}(P, X) \rightarrow \Hom{\Mod{\Tc}}(\y P, \y X)$ induced by the functor $\y$ is an isomorphism for all objects $X$ in $\T$.
\end{enumerate}
\end{proposition}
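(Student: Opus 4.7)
My plan is to prove the pure-injective equivalences; the pure-projective case is formally dual (swap pure monos with pure epis, injectives with projectives, and reverse all arrows). The key construction is the canonical \emph{pure-projective presentation} of an object $X$ in $\T$: the sum map
\[P \;:=\; \bigoplus_{K \in \Tc,\, f \colon K \to X} K \;\longrightarrow\; X\]
is a pure epimorphism because the representables $\y K$ with $K \in \Tc$ generate $\Mod{\Tc}$. Completing to a triangle $X' \to P \to X \to X'[1]$ and iterating on $X'$ yields a sequence of pure triangles whose image under $\y$ splices to a projective resolution $\cdots \to \y P_1 \to \y P_0 \to \y X \to 0$ in $\Mod{\Tc}$. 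I will also repeatedly invoke the Hom-identification $\Hom{\T}(Q, M) \cong \Hom{\Mod{\Tc}}(\y Q, \y M)$ for every coproduct $Q$ of compact objects and every $M$ in $\T$, which follows from Yoneda together with compactness of each summand.

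For $(1) \Rightarrow (3)$, I apply $\Hom{\T}(-, E)$ to the pure triangles of the presentation: since $E$ is pure-injective, maps from the left-hand term extend along the middle term, so each pure triangle yields a short exact sequence of abelian groups. Splicing the first two steps identifies $\Hom{\T}(X, E)$ with $\ker(\Hom{\T}(P_0, E) \to \Hom{\T}(P_1, E))$, which, via the Hom-identification and left-exactness of $\Hom{\Mod{\Tc}}(-, \y E)$ applied to the beginning of the projective resolution, agrees with $\Hom{\Mod{\Tc}}(\y X, \y E)$; the comparison map of (3) realises this identification. For $(2) \Rightarrow (3)$, the very same diagram works, with injectivity of $\y E$ replacing pure-injectivity of $E$ to supply exactness on the $\Mod{\Tc}$ side. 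The implication $(2) \Rightarrow (1)$ then follows immediately: a pure mono $f \colon E \to X$ induces a mono $\y f$ that splits by injectivity of $\y E$, and the retraction $\y X \to \y E$ lifts, using $(2)\Rightarrow(3)$, to a retraction of $f$ in $\T$.

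The most delicate step is $(3) \Rightarrow (2)$: I must deduce injectivity of $\y E$ in $\Mod{\Tc}$ from a bijection that only involves $\y$-images. Given a short exact sequence $0 \to F \to G \to H \to 0$ in $\Mod{\Tc}$ with a test map $\alpha \colon F \to \y E$, the plan is to choose surjections from coproducts of representables onto $G$ and $H$, realise the relevant data as the $\y$-image of a diagram of pure triangles in $\T$, and use (3) together with the Hom-identification to transport the extension problem back into $\T$, where it is solved by a short diagram chase. This is the main obstacle because (3) constrains only $\Hom$-sets between objects in the essential image of $\y$, whereas injectivity of $\y E$ concerns extensions from arbitrary subobjects in $\Mod{\Tc}$; the bridge is provided by the fact that every projective in $\Mod{\Tc}$ is a coproduct of representables and hence of the form $\y Q$ with $Q$ in $\T$, so the injectivity test for $\y E$ ultimately reduces to $\Hom$-computations that (3) controls.
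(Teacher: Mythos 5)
The paper does not prove this proposition itself; it cites \cite[Thm.~1.8]{MR1728877}, where the argument goes through Brown representability. Your implication $(1)\Rightarrow(3)$ is fine: pure-injectivity of $E$ is precisely what makes $\Hom{\T}(-,E)$ carry the pure triangles $X_{n+1}\to P_n\to X_n$ to short exact sequences, so both rows of your comparison diagram are exact, and the $\Hom$-identification at the coproducts of compacts $P_n$ forces the comparison map to be an isomorphism.

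The gap is at $(2)\Rightarrow(3)$, which you then also use for $(2)\Rightarrow(1)$. Injectivity of $\y E$ supplies exactness only on the $\Mod{\Tc}$ row; under hypothesis $(2)$ alone, the $\T$-row $0\to\Hom{\T}(X,E)\to\Hom{\T}(P_0,E)\to\Hom{\T}(P_1,E)\to\cdots$ is merely a chain complex. Its failure to be exact at $\Hom{\T}(X,E)$ is exactly the image of $w^*\colon\Hom{\T}(X'[1],E)\to\Hom{\T}(X,E)$, where $w\colon X\to X'[1]$ is the (phantom) boundary map of the pure triangle, and this image is the kernel of the comparison map at $X$. Asserting that this kernel vanishes for every phantom map into $E$ is asserting that $E$ is pure-injective --- that is, it is assumption $(1)$. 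Since you derive $(2)\Rightarrow(1)$ from $(2)\Rightarrow(3)$, the argument is circular; once $(2)\Rightarrow(3)$ is withdrawn, the surviving implications $(1)\Rightarrow(3)\Rightarrow(2)$ do not close up. The standard repair is Brown representability: if $\y E$ is injective then $\Hom{\Mod{\Tc}}(\y-,\y E)$ is cohomological and sends coproducts to products, hence is representable by some $E'$; your comparison map, read via Yoneda, gives a morphism $E\to E'$ that becomes an isomorphism after $\y$ (it is already an isomorphism on compacts), and $\y$ reflects isomorphisms in a compactly generated $\T$, so $E\cong E'$. Equivalently: once $(2)$ holds, both sides of the comparison are cohomological and send coproducts to products, so the class of $X$ on which the comparison is an isomorphism is a localizing subcategory containing $\Tc$, hence all of $\T$ --- a five-lemma along triangles, not a comparison of projective resolutions. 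Finally, your $(3)\Rightarrow(2)$ sketch can be completed, but the ``short diagram chase'' is the whole content: after reducing to $G=\y Q$ with $Q$ a coproduct of compacts and taking a projective cover $\y Q'\twoheadrightarrow F$, one lifts to $g\colon Q'\to Q$ and $h\colon Q'\to E$, completes $g$ to a triangle with cone $K$, and must invoke $(3)$ at $K$ (which is not a coproduct of compacts) to see that $h$ annihilates the cocone map, so that $h$ extends across $g$. Spell this out, since it is exactly where $(3)$ is used beyond the automatic $\Hom$-identification.
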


\subsection{Pure triangles in terms of coherent reduced products.}
We will call a strong and stable derivator $\Der$ \textbf{compactly generated} if $\Der(\base)$ is a compactly generated triangulated category.  In this section we give an intrinsic characterisation of pure triangles (and hence of pure injective objects) in the underlying category $\Der(\base)$ of a compactly generated derivator $\Der$.  

\begin{lemma}\label{Lem: hcg shifts}
Let $\Der$ be a compactly generated derivator.  For any small category $A$, the category $\Der(A)$ of coherent diagrams is compactly generated.
\end{lemma}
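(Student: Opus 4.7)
The plan is to produce an explicit set of compact generators of $\Der(A)$ from a given set of compact generators of $\Der(\base)$, using the Kan extensions along the inclusions $a \colon \base \to A$ for $a \in A$.

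First I would fix a (skeletally small) set $\mathcal{S}$ of compact generators of $\Der(\base)$ and, for each object $a$ of $A$, form the left Kan extension $a_! X$ for $X \in \mathcal{S}$; this functor $a_!$ exists by (Der3). The candidate generating set for $\Der(A)$ is
\[
\mathcal{S}_A := \{\, a_! X \mid a \in A,\ X \in \mathcal{S}\,\},
\]
which is a set because $A$ is small and $\mathcal{S}$ is a set. Coproducts in $\Der(A)$ exist (e.g.\ as homotopy colimits over a discrete category, using that $\Der^A$ is again a derivator).

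The key observation is that the evaluation functor $a^* \colon \Der(A) \to \Der(\base)$ has both a left adjoint $a_!$ and a right adjoint $a_*$, hence preserves arbitrary coproducts. Combined with the adjunction $a_! \dashv a^*$ and the compactness of $X$ in $\Der(\base)$, we get for any family $\{Y_i\}_{i \in I}$ in $\Der(A)$:
\[
\Hom{\Der(A)}\bigl(a_! X, \textstyle\coprod_i Y_i\bigr) \cong \Hom{\Der(\base)}\bigl(X, \coprod_i (Y_i)_a\bigr) \cong \coprod_i \Hom{\Der(A)}(a_! X, Y_i),
\]
so each $a_! X$ is compact in $\Der(A)$. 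For the generating property, suppose $Y \in \Der(A)$ satisfies $\Hom{\Der(A)}(a_! X, Y) = 0$ for every $a \in A$ and $X \in \mathcal{S}$. By the same adjunction, $\Hom{\Der(\base)}(X, Y_a) = 0$ for every compact $X$, which forces $Y_a = 0$ for every $a$. The joint conservativity of the evaluations $\{a^*\}_{a \in A}$ (axiom (Der2)) then implies $Y \cong 0$: the unique map $0 \to Y$ is sent by each $a^*$ to the isomorphism $0 \to Y_a$, hence is itself an isomorphism.

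The main obstacle is essentially notational rather than mathematical: one must be careful that coproducts in $\Der(A)$ are computed pointwise (which follows from $a^*$ being both a left and a right adjoint) and that the zero object is detected pointwise (which follows from (Der2) applied to $0 \to Y$). Once these are in place, the proof is just the classical argument transporting compact generators along an adjunction $a_! \dashv a^*$ whose right adjoint preserves coproducts, run uniformly over $a \in A$.
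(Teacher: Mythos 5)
Your proof is correct and takes essentially the same approach as the paper: both use the set $\{a_!C \mid a \in A,\ C \text{ compact in } \Der(\base)\}$, establish compactness via the adjunction $a_! \dashv a^*$ together with $a^*$ preserving coproducts (it has a right adjoint $a_*$), and deduce generation from (Der2) after reducing pointwise to the compact generation of $\Der(\base)$.
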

\begin{proof}
It suffices to show that $\Der(A)$ has a set of compact generators.  We prove that the set \[\mathcal{Y} := \{Y \mid Y \cong a_!C \text{ for some $a$ in } A \text{ and $C$ in } \Der(\base)^\c\}\] is such a set.  

To see that $\mathcal{Y}$ is a generating set, let $Z$ be an object of $\Der(A)$ such that $\Hom{\Der(A)}(Y, Z) = 0$ for all $Y\in Y$.  Then we have that $\Hom{\Der(\base)}(C, Z_a) \cong \Hom{\Der(A)}(a_!C, Z) =0$ for all $a$ in $A$ and $C$ in $\Der(\base)^\c$.  Since $\Der(\base)$ is compactly generated, we have that $Z_a \cong 0$ for all $a$ in $A$.  It follows from (Der2) that $Z$ is a zero object in $\Der(A)$.  

Next we show that the objects in $\mathcal{Y}$ are compact.  For an object $a$ in $A$ and $C$ in $\Der(\base)^\c$, we have $  \Hom{\Der(\base)}(C, \left(\bigoplus_{s\in S} X_s\right)_a)  \cong \Hom{\Der(\base)}(C, \bigoplus_{s\in S} (X_s)_a) \cong  \bigoplus_{s\in S}\Hom{\Der(\base)}(C,(X_s)_a)
$ because $a^*$ is a left adjoint.  Thus $\Hom{\Der(A)}(a_!C, \bigoplus_{s\in S} X_s) \cong\bigoplus_{s\in S}\Hom{\Der(A)}(a_!C, X_s)$ as required.
\end{proof}

\begin{corollary}
Let $\Der$ be a compactly generated derivator.  For any small category $A$, the shifted derivator $\Der^A$ is compactly generated.
\end{corollary}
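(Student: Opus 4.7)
The plan is to observe that this corollary is essentially a direct application of Lemma \ref{Lem: hcg shifts}, combined with the standard fact that shifting preserves strength and stability. Recall that the shifted derivator $\Der^A$ is defined by $\Der^A(B) := \Der(A \times B)$, so in particular its underlying category is $\Der^A(\base) = \Der(A \times \base) \cong \Der(A)$.

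First I would verify (or rather cite from the appendix on shifted derivators and the standard references, e.g.\ \cite{Der}) that $\Der^A$ inherits the properties of being a strong and stable derivator from $\Der$. This is routine: the axioms (Der1)--(Der4) for $\Der^A$ reduce to axioms for $\Der$ applied to product categories, strength is preserved because finite free categories can be absorbed into the partial underlying diagram functors for $\Der$, and stability is preserved because the relevant homotopy exact squares remain homotopy exact after taking a product with $A$.

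Second, I would apply Lemma \ref{Lem: hcg shifts} directly: since $\Der$ is compactly generated and $A$ is a small category, the category $\Der(A)$ is compactly generated. Under the identification $\Der^A(\base) \simeq \Der(A)$, this says exactly that the underlying category of $\Der^A$ is compactly generated, which is by definition what it means for $\Der^A$ to be a compactly generated derivator.

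There is no real obstacle here; the only subtlety is the preservation of strength and stability under shifting, but both are well-documented in the derivator literature and follow from the 2-categorical formalism rather than requiring new input. The substantive content of the corollary is really Lemma \ref{Lem: hcg shifts}, and this statement simply repackages that lemma in the language of shifted derivators for later use.
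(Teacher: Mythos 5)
Your proof is correct and follows essentially the same route as the paper: one first records that $\Der^A$ is strong and stable (the paper cites \cite[Prop.~4.3]{Der}, reproduced in the appendix on shifted derivators), and then applies Lemma \ref{Lem: hcg shifts} together with the identification $\Der^A(\base) \simeq \Der(A)$ to conclude that the underlying category of $\Der^A$ is compactly generated.
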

\begin{proof}
By Proposition \ref{Prop: strong and stable}, the derivator $\Der^A$ is strong and stable and so the statement is immediate from Lemma \ref{Lem: hcg shifts}.
\end{proof}

Let $\Der \colon \Cat^\op \rightarrow \CAT$ be a derivator.  An object $X$ in $\Der(\base)$ is called \textbf{homotopically finitely presented} if the canonical morphism \[  \varinjlim_{i\in I}\Hom{\Der(\base)}(X, Y_i) \rightarrow \Hom{\Der(\base)}(X, \hocolim{I}{Y})\] is an isomorphism for every small directed category $I$ and every object $Y$ in $\Der(I)$.  

\begin{proposition}[{\cite[Prop.~5.4]{SSV}}]
Let $\Der$ be a strong and stable derivator.  Then an object $C$ in $\Der(\base)$ is compact if and only if it is homotopically finitely presented.
\end{proposition}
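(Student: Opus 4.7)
The plan is to prove the two implications separately.  The easy direction rewrites a set-indexed coproduct as a directed homotopy colimit of finite biproducts; the hard direction passes through the restricted Yoneda functor $\y$ to reduce to a statement about finitely presented functors on $\Tc$.

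For the implication that homotopically finitely presented implies compact, let $\{X_s\}_{s\in S}$ be a set of objects in $\Der(\base)$.  By a standard cofinality argument, the set-indexed coproduct is a directed homotopy colimit
\[
\textstyle\bigoplus_{s\in S}X_s \cong \hocolimf{F\in\mathcal{P}_\mathrm{fin}(S)}\bigl(\bigoplus_{s\in F}X_s\bigr)
\]
indexed by the filtered poset of finite subsets of $S$ with canonical inclusions as transition maps.  Applying the hfp hypothesis and using that $\Hom{\Der(\base)}(C,-)$ preserves finite biproducts then yields $\Hom{\Der(\base)}(C,\bigoplus_s X_s)\cong \bigoplus_s\Hom{\Der(\base)}(C,X_s)$, which is compactness.

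For the converse, if $C$ is compact then $C\in\Tc$, so $\y C=\Hom{\Tc}(-,C)$ is a representable, hence finitely presented, object of $\Mod{\Tc}$.  Yoneda's lemma supplies a natural isomorphism $\Hom{\Der(\base)}(C,X)\cong\Hom{\Mod{\Tc}}(\y C,\y X)$ for every $X$ in $\Der(\base)$.  Granting the intermediate claim that $\y$ sends directed homotopy colimits to direct limits, that is, $\y\hocolim{I}{Y}\cong\varinjlim_{i\in I}\y Y_i$ for any directed $I$ and any $Y\in\Der(I)$, the finite presentability of $\y C$ then delivers
\[
\Hom{\Der(\base)}(C,\hocolim{I}{Y}) \cong \Hom{\Mod{\Tc}}(\y C,\textstyle\varinjlim_i \y Y_i) \cong \varinjlim_i\Hom{\Der(\base)}(C,Y_i),
\]
which is the hfp property.

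The main obstacle is the preservation statement for $\y$, which cannot be proved by simply invoking compactness of each $\Hom$-functor on $\Tc$, since that would assume the very property we are trying to establish.  A direct route is to produce a Milnor-type triangle
\[
\bigoplus_{i\in I}Y_i \longrightarrow \bigoplus_{i\in I}Y_i \longrightarrow \hocolim{I}{Y} \longrightarrow \bigoplus_{i\in I}Y_i[1]
\]
in $\Der(\base)$, either from a cofinal sequential approximation (when $I$ admits one) or via a general coend/coequaliser description of directed homotopy colimits available in any strong and stable derivator.  Applying the cohomological functor $\y$, which preserves arbitrary coproducts since each object of $\Tc$ is compact, yields a long exact sequence in $\Mod{\Tc}$ whose cokernel is identified with $\varinjlim_i \y Y_i$ by the analogous classical presentation of directed colimits in the Grothendieck category $\Mod{\Tc}$.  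Strongness of the derivator is what ensures that the required coherent Milnor diagram can be constructed from its incoherent description, while stability provides the ambient triangulated structure in which the Milnor argument takes place.
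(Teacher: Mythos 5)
The easy direction (homotopically finitely presented implies compact) is essentially fine: writing $\bigoplus_{s\in S}X_s$ as $\hocolimf{\mathcal{P}_{\mathrm{fin}}(S)}$ of the finite sub-coproducts is a genuine derivator fact, most cleanly seen by factoring $\pi_{S}\colon S\to\base$ through the inclusion $l\colon S\to\mathcal{P}_{\mathrm{fin}}(S)$, $s\mapsto\{s\}$, and using that $(l)_!(X)_F\cong\bigoplus_{s\in F}X_s$ together with $\pi_{S,!}=(\pi_{\mathcal{P}_{\mathrm{fin}}(S)})_!\circ l_!$.

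The hard direction has a genuine gap. You correctly diagnose the circularity in invoking $\y\hocolim{I}{Y}\cong\varinjlim_i\y Y_i$ directly, but the repair you propose does not close it. The triangle
\[
\bigoplus_{i\in I}Y_i \longrightarrow \bigoplus_{i\in I}Y_i \longrightarrow \hocolim{I}{Y} \longrightarrow
\]
with the map $\mathrm{id}-\mathrm{shift}$ is the Milnor telescope formula, and it literally presupposes a successor structure on $I$, i.e.\ $I\cong\mathbb{N}$ (or an $\mathbb{N}$-cofinal diagram). You acknowledge this with the caveat ``when $I$ admits one,'' but arbitrary directed categories can have uncountable cofinality, so the sequential reduction cannot be assumed. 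Your fallback, a ``general coend/coequaliser description of directed homotopy colimits,'' is not a theorem you can cite: the coequaliser presentation $\bigoplus_{\alpha\colon i\to j}Y_i\to\bigoplus_i Y_i$ (note the different index set from what you wrote) computes the $1$-categorical colimit, and for a homotopy colimit over a general $I$ the bar construction contributes in all simplicial degrees; showing that these contributions vanish for filtered $I$ in a stable derivator is itself a nontrivial claim that would have to be proved, not asserted. Moreover, your proof of both directions passes through $\Tc$ and $\Mod{\Tc}$, so you are implicitly assuming $\Der(\base)$ is compactly generated, a hypothesis absent from the statement.

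A route that avoids the Milnor triangle entirely, and matches the ingredients the paper has already assembled (Lemma \ref{Lem: hcg shifts}), is dévissage on the coherent diagram category: for fixed compact $C$ and directed $I$, both $Y\mapsto\Hom{\Der(\base)}(C,\hocolim{I}{Y})$ and $Y\mapsto\varinjlim_i\Hom{\Der(\base)}(C,Y_i)$ are cohomological, coproduct-preserving functors $\Der(I)\to\Ab$, and the canonical comparison map is a natural transformation between them. Evaluating on the compact generators $i_!D$ of $\Der(I)$ and using the pointwise formula for $j^*i_!$ together with the filteredness of $I$ (so that $\varinjlim_j\Hom{I}(i,j)$ is a point), one checks both functors give $\Hom{\Der(\base)}(C,D)$, whence the comparison is an isomorphism everywhere. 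This is cleaner than the Milnor approach and makes transparent exactly where compact generation of $\Der(\base)$ enters.
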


\begin{remark}\label{Rem: hocolim to dir lim}
The preceding result can be rephrased in the following way: Let $\T \simeq\Der(\base)$ be the underlying category of a compactly generated derivator $\Der$.  Then, for any small directed category $I$ and any $X$ in $\Der(I)$, we have that \[\varinjlim_{i\in I} \y X_i \cong \y \hocolim{I}{X}.\]

Combining this with the construction described in Section \ref{Sec: Red Prod}, we have that any reduced product of representable functors in $\Mod{\Tc}$ is representable.  That is, for any proper filter $\mathcal{F}$ on a set $S$ and any $X$ in $\Der(S) \simeq \T^S$, we have that \[ \prod_{s\in S} \y X_s / \mathcal{F}  \cong \y \Redu{X}.\] 
\end{remark}

\begin{remark}
Reduced products and ultraproducts are ubiquitous in model theory, see for example \cite[Chap.~4]{MR1059055}. In \cite{MR2199207}, the authors introduce a language $\mathcal{L}_\T$ for a compactly generated triangulated category $\T$ such that the models of $\mathcal{L}_\T$ satisfying certain axioms coincide with the objects of $\Mod{\Tc}$. Moreover, the reduced product in the sense of \cite[Prop.~4.1.6]{MR1059055} coincides with the reduced product in $\Mod{\Tc}$.  We may consider the objects of $\T$ as models of $\mathcal{L}_\T$ via the functor $\y$.  Remark \ref{Rem: hocolim to dir lim} says that the collection of objects of $\T$ (considered as models of $\mathcal{L}_\T$) is closed under taking reduced products.
\end{remark}

\begin{proposition}\label{Prop: pure tri}
Let $\T$ be the underlying category of a compactly generated derivator $\Der$.  Let $X \rightarrow Y \rightarrow Z\rightarrow \Sigma X$ be a triangle in $\T$.  Then the following statements are equivalent:\begin{enumerate}
\item The sequence $X \rightarrow Y \rightarrow Z\rightarrow \Sigma X$ is a pure triangle.
\item There exists an ultrafilter $\mathcal{F}$ on a set $S$ such that the coherent ultrapower \[ \Redu{(X^S)} \rightarrow \Redu{(Y^S)} \rightarrow \Redu{(Z^S)} \rightarrow \Sigma\Redu{(X^S)} \] is a split triangle.
\end{enumerate}
\end{proposition}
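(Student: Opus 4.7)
The strategy is to translate the question through the restricted Yoneda functor $\y \colon \T \to \Mod{\Tc}$ and to invoke the classical ultrapower-splitting theorem for pure-exact sequences in locally coherent Grothendieck categories. By Remark \ref{Rem: hocolim to dir lim}, $\y$ sends the coherent ultrapower of $\delta$ in $\T$ to the ordinary ultrapower $(\y\delta)^S/\mathcal{F}$ in $\Mod{\Tc}$. Since, by definition, $\delta$ is a pure triangle if and only if $\y\delta$ is a pure-exact short exact sequence in $\Mod{\Tc}$, the problem reduces to analysing when the ultrapower of a (pure-exact) sequence splits in $\Mod{\Tc}$, and when such a splitting can be lifted back to $\T$.

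For $(2) \Rightarrow (1)$, suppose the coherent ultrapower of $\delta$ splits; applying $\y$, the ultrapower $(\y\delta)^S/\mathcal{F}$ is a split sequence in $\Mod{\Tc}$. Given $F \in \mod{\Tc}$ and a morphism $\varphi \colon F \to \y Z$, I would compose with the canonical diagonal $\y Z \to (\y Z)^S/\mathcal{F}$, lift through the splitting to $F \to (\y Y)^S/\mathcal{F}$, and use the finite presentability of $F$ to factor this through $(\y Y)^P$ for some $P \in \mathcal{F}$. After refining $P$ inside $\mathcal{F}$ so that the projected morphism agrees with the diagonal image of $\varphi$ after $\y g$, projecting to any single coordinate yields the required lift of $\varphi$ over $\y g$. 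Thus $\y\delta$ is pure-exact and $\delta$ is a pure triangle.

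For $(1) \Rightarrow (2)$, assume $\delta$ is pure, so $\y\delta$ is pure-exact in the locally coherent Grothendieck category $\Mod{\Tc}$ (locally coherent because $\mod{\Tc}$ is abelian in the compactly generated setting). Adapting the classical saturation argument (cf.~\cite[Thm.~4.2.18]{MR2530988}) from $\Mod{R}$ to the functor category $\Mod{\Tc}$, I would choose a regular ultrafilter $\mathcal{F}$ on a set $S$ of sufficiently large cardinality so that, simultaneously, $(\y\delta)^S/\mathcal{F}$ splits in $\Mod{\Tc}$ and $Z^S/\mathcal{F}$ is pure-injective in $\T$ (equivalently, by Proposition \ref{prop: pure-triangle}, $\y Z^S/\mathcal{F}$ is injective in $\Mod{\Tc}$). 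The pure-injectivity then supplies, via Proposition \ref{prop: pure-triangle}, the identification $\Hom{\T}(Z^S/\mathcal{F}, Y^S/\mathcal{F}) \cong \Hom{\Mod{\Tc}}(\y Z^S/\mathcal{F}, \y Y^S/\mathcal{F})$, so the section in $\Mod{\Tc}$ lifts to a section in $\T$ and the coherent ultrapower triangle splits.

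The main obstacle lies in $(1) \Rightarrow (2)$: executing the classical model-theoretic saturation argument directly in the functor category $\Mod{\Tc}$, and simultaneously arranging the ultrafilter $\mathcal{F}$ so that $\y Z^S/\mathcal{F}$ is injective (not merely pure-injective) in $\Mod{\Tc}$, as required to apply Proposition \ref{prop: pure-triangle}. The language of pp-formulas over $\Tc$ should make the first task formally analogous to the classical arguments in $\Mod{R}$; the second demands additional saturation, since the injectivity in $\Mod{\Tc}$ corresponding to pure-injectivity in $\T$ under $\y$ is strictly stronger than the pure-injectivity that a generic ultrapower of a module produces.
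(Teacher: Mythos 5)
Your overall strategy is the same as the paper's: push the triangle through $\y$ using Remark \ref{Rem: hocolim to dir lim}, apply the ultrapower-splitting criterion for pure-exactness in the locally coherent category $\Mod{\Tc}$, and lift the splitting back to $\T$ via pure-injectivity of the ultrapower terms. However, there is one genuine gap in your proposal, and one minor slip.

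The gap is in your opening paragraph, where you assert ``by definition, $\delta$ is a pure triangle if and only if $\y\delta$ is a \emph{pure-exact} short exact sequence in $\Mod{\Tc}$.'' The actual definition (as stated in the paper) only asks $\y\delta$ to be \emph{exact}. The equivalence with pure-exactness is true but is not a definition --- it requires the observation that $\y X$ is fp-injective in $\Mod{\Tc}$ (\cite[Lem.~1.6]{MR1728877}), so that any exact sequence starting at $\y X$ is automatically pure-exact. Without this step you cannot invoke the ultrapower-splitting theorem for $(1)\Rightarrow(2)$, since that theorem takes a pure-exact, not merely exact, sequence as input. The paper isolates exactly this step; your proposal silently folds it into the definition. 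Separately, in $(1)\Rightarrow(2)$ you say you would arrange $Z^S/\mathcal{F}$ to be pure-injective. To lift a section $\y Z^S/\mathcal{F}\to\y Y^S/\mathcal{F}$ through $\y$ via Proposition \ref{prop: pure-triangle}(3), you need the \emph{target} $Y^S/\mathcal{F}$ to be pure-injective (equivalently, $\y Y^S/\mathcal{F}$ injective). This is harmless since a sufficiently good ultrafilter makes all three ultrapowers pure-injective simultaneously, which is the content of the paper's remark that ``the terms in the split exact sequence can be taken to be (pure-)injective,'' but as written it points at the wrong object.

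Your $(2)\Rightarrow(1)$ direction is an unpacked, hands-on version of one implication of the same theorem the paper cites: using that $(\y Y)^S/\mathcal{F}=\varinjlim_{P\in\mathcal{F}}(\y Y)^P$ and that finitely presented $F$ factor through a finite stage, you recover that $\y g$ is an epimorphism, which suffices since exactness of $\y X\to\y Y\to\y Z$ at $\y Y$ holds for any triangle. That argument is correct and is a perfectly valid alternative to citing \cite[Thm.~16.1.16]{MR2530988}, just more work. For $(1)\Rightarrow(2)$, by contrast, you flag the saturation argument in $\Mod{\Tc}$ as the ``main obstacle,'' but the paper sidesteps it entirely by citing that theorem in its generality for locally finitely presented categories; re-proving it from scratch in the functor-category language is unnecessary.
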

\begin{proof}
By definition, the statement $(1)$ is equivalent to the sequence $0 \rightarrow \y X \rightarrow \y Y \rightarrow \y Z\rightarrow 0$ being an exact sequence in $\Mod{\Tc}$.  As $\y X$ is fp-injective \cite[Lem.~1.6]{MR1728877}, every exact sequence in $\Mod{\Tc}$ starting with $\y X$ is pure-exact (see Section \ref{Sec: Loc coh} for the definition of fp-injective).  By \cite[Thm.~16.1.16]{MR2530988}, this is equivalent to there existing an ultrafilter $\mathcal{F}$ on a set $S$ such that $0 \rightarrow (\y X)^S/\mathcal{F} \rightarrow (\y Y)^S/\mathcal{F} \rightarrow (\y Z)^S/\mathcal{F}\rightarrow 0$ is a split exact sequence in $\Mod{\Tc}$. Moreover, the terms in the split exact sequence can be taken to be (pure-)injective and so, by Remark \ref{Rem: hocolim to dir lim}, this is equivalent to $(2)$.
\end{proof}

\begin{remark}
The characterisation of pure-exact sequences in locally finitely presented categories in terms of ultrapowers is vital in the preceding proof.  This result follows from a standard result in model theory -- a more detailed account of this approach is given in \cite{MR2791358}.  
\end{remark}

\begin{corollary}\label{Cor: shifted pure tri}
Let $A$ be a small category.  If $X \rightarrow Y \rightarrow Z\rightarrow \Sigma X$ is a pure triangle in $\Der(A)$ then $X_a \rightarrow Y_a \rightarrow Z_a\rightarrow \Sigma X_a$ is a pure triangle in $\T$ for each $a \in A$.
\end{corollary}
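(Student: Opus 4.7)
The plan is to reduce to the ultrapower characterisation of pure triangles given in Proposition \ref{Prop: pure tri}. The corollary to Lemma \ref{Lem: hcg shifts} asserts that the shifted derivator $\Der^A$ is itself a compactly generated (strong and stable) derivator, with underlying category $\Der^A(\base) = \Der(A)$. We may therefore apply Proposition \ref{Prop: pure tri} inside $\Der(A)$: the hypothesis produces an ultrafilter $\mathcal{F}$ on a set $S$ such that the coherent ultrapower
\[ \Redu{(X^S)} \longrightarrow \Redu{(Y^S)} \longrightarrow \Redu{(Z^S)} \longrightarrow \Redu{(X^S)}[1] \]
(computed with respect to $\Der^A$) is a split triangle in $\Der(A)$.

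Next, I would apply the evaluation functor $a^* \colon \Der(A) \to \Der(\base)$. By Proposition \ref{Prop: strong and stable} the functor $a^*$ is exact and, being additive, sends split triangles to split triangles. The crucial compatibility is Remark \ref{Rem: shift red prod}, which gives canonical isomorphisms
\[ (\Redu{(X^S)}^{\Der^A})_a \;\cong\; \Redu{(X_a^S)}^{\Der}, \]
and similarly for $Y$ and $Z$. Applying $a^*$ to the split triangle above therefore produces a split triangle of the form
\[ \Redu{(X_a^S)} \longrightarrow \Redu{(Y_a^S)} \longrightarrow \Redu{(Z_a^S)} \longrightarrow \Redu{(X_a^S)}[1] \]
in $\T = \Der(\base)$, which is precisely the coherent ultrapower of the evaluated triangle with respect to the same ultrafilter $\mathcal{F}$.

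Finally, invoking the converse direction of Proposition \ref{Prop: pure tri} (now applied in $\Der$ itself), the existence of an ultrafilter witnessing a split ultrapower is equivalent to the triangle $X_a \to Y_a \to Z_a \to X_a[1]$ being a pure triangle in $\T$.

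I do not expect any genuine obstacle here; the proof is essentially a bookkeeping argument assembling three previously established ingredients (Proposition \ref{Prop: pure tri}, exactness of $a^*$ from Proposition \ref{Prop: strong and stable}, and the compatibility of evaluation with coherent ultraproducts from Remark \ref{Rem: shift red prod}). The only point requiring mild care is ensuring that the ultrapower in the shifted derivator is interpreted consistently with the ultrapower in $\Der$ after evaluating, which is exactly what Remark \ref{Rem: shift red prod} supplies.
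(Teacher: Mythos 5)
Your argument is correct and is exactly the paper's proof, just unpacked in more detail: the paper's one-line proof cites the same three ingredients (Proposition \ref{Prop: pure tri}, exactness of $a^*$ from Proposition \ref{Prop: strong and stable}, and Remark \ref{Rem: shift red prod}), and implicitly uses that $\Der^A$ is compactly generated (established right after Lemma \ref{Lem: hcg shifts}) to apply Proposition \ref{Prop: pure tri} in $\Der(A)$.
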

\begin{proof}
Note that, by Proposition \ref{Prop: strong and stable}, the evaluation functor $a^* \colon \Der(A) \rightarrow \Der(\base)$ is an exact functor.  Combining this with Remark \ref{Rem: shift red prod}, the statement is an immediate consequence of Proposition \ref{Prop: pure tri}.
\end{proof}

\begin{remark}
A similar approach was taken in \cite{MR1899046} without requiring the presence of a derivator by using the notion of a homology colimit instead of homotopy colimit; pure triangles are characterised as homology colimits of split triangles. 
\end{remark}

\subsection{Definable subcategories in terms of coherent reduced products.}

A full subcategory $\mathcal{D}$ of a compactly generated triangulated category $\T$ is called \textbf{definable} if it is of the form \[ \mathcal{D} = \{ X \in \T \mid \Hom{\Mod{\Tc}}(F_i, \y X) = 0 \text{ for all } i \in I\}\] where $\{F_i \}_{i\in I}$ is a family of functors in $\mod{\Tc}$.  For a class of objects $\mathcal{E}$ in $\T$ we denote by $\Def{\T}{\mathcal{E}}$ the \textbf{smallest definable subcategory containing $\mathcal{E}$}.  The subcategory $\Def{\T}{\mathcal{E}}$ always exists and is given by \[\Def{\T}{\mathcal{E}} = \{ X \in \T \mid \Hom{\Mod{\Tc}}(F, \y X) = 0 \text{ for all } F \in \mathcal{Y}_\mathcal{E}\}\] where $\mathcal{Y}_\mathcal{E} = \{ F \in \mod{\Tc} \mid \Hom{\Mod{\Tc}}(F, \y M) = 0 \text{ for all } M \in \mathcal{E}\}$.  If $\mathcal{E} = \{ M \}$ then we denote the definable subcategory generated by $\mathcal{E}$ by $\Def{\T}{M}$.

The following lemma summarises the close relationship between the definable subcategories of $\T$ and the definable subcategories of $\Mod{\Tc}$ (see \cite{exactly} for the definition of definable subcategories of $\Mod{\Tc}$).  

\begin{lemma}[{\cite[Cor.~4.4]{AMV}}]\label{Lem: defT=defM}
Let $\T$ be a compactly generated triangulated category and let $\mathcal{E}$ be a class of objects in $\T$. Then $ \Def{\T}{\mathcal{E}} = \{ M \in \T \mid \y M \in \Def{\Mod{\Tc}}{\y\mathcal{E}}\}$.
\end{lemma}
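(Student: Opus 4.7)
The plan is to translate both sides of the claimed equality into a Hom-vanishing condition against the same Serre subcategory of $\mod{\Tc}$. The setting is favourable: since $\T$ is compactly generated, $\Mod{\Tc}$ is a locally coherent Grothendieck category with $\mod{\Tc}$ as its subcategory of coherent (equivalently, finitely presented) objects \cite{MR1728877}. In this environment the definable subcategories of $\Mod{\Tc}$ are controlled by Serre subcategories of $\mod{\Tc}$, and that correspondence will form the bridge between $\T$ and $\Mod{\Tc}$.

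First, I would check that $\mathcal{Y}_\mathcal{E} = \{F \in \mod{\Tc} \mid \Hom{\Mod{\Tc}}(F,\y M)=0 \text{ for all } M \in \mathcal{E}\}$ is a Serre subcategory of $\mod{\Tc}$. Closure under subobjects and quotients is immediate from the left exactness of $\Hom{\Mod{\Tc}}(-, \y M)$, while closure under extensions uses the fact that each $\y M$ is fp-injective \cite[Lem.~1.6]{MR1728877}, so that $\Ext{\Mod{\Tc}}(F, \y M) = 0$ for every $F \in \mod{\Tc}$. The main step is then to identify $\Def{\Mod{\Tc}}{\y\mathcal{E}}$ with the perpendicular class $\mathcal{Y}_\mathcal{E}^\perp := \{N \in \Mod{\Tc} \mid \Hom{\Mod{\Tc}}(F, N) = 0 \text{ for all } F \in \mathcal{Y}_\mathcal{E}\}$. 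One inclusion is easy: $\mathcal{Y}_\mathcal{E}^\perp$ is closed under products, direct limits and (pure) subobjects and contains $\y\mathcal{E}$ by construction, so it is a definable subcategory of $\Mod{\Tc}$ containing $\y\mathcal{E}$. The reverse inclusion is where the locally coherent structure does the real work: for a locally coherent Grothendieck category, the assignment $\mathcal{S} \mapsto \mathcal{S}^\perp$ gives a bijection between Serre subcategories of $\mod{\Tc}$ and definable subcategories of $\Mod{\Tc}$, with inverse $\mathcal{D} \mapsto \{F \in \mod{\Tc} \mid \Hom{\Mod{\Tc}}(F, N) = 0 \text{ for all } N \in \mathcal{D}\}$. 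Applied to $\mathcal{Y}_\mathcal{E}$, which is by definition the largest Serre subcategory whose perpendicular contains $\y\mathcal{E}$, this identifies $\mathcal{Y}_\mathcal{E}^\perp$ with the smallest definable subcategory containing $\y\mathcal{E}$.

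Once this identification is available, the lemma unwinds mechanically: $M \in \Def{\T}{\mathcal{E}}$ iff $\Hom{\Mod{\Tc}}(F, \y M) = 0$ for every $F \in \mathcal{Y}_\mathcal{E}$, iff $\y M \in \mathcal{Y}_\mathcal{E}^\perp$, iff $\y M \in \Def{\Mod{\Tc}}{\y\mathcal{E}}$. The main obstacle is thus the Serre--definable bijection in the preceding paragraph; this is the genuine external input and rests on classical results of Herzog and Krause on purity in Grothendieck categories. Everything else is bookkeeping around the definitions.
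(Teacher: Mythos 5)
The main issue is the bijection you invoke in the middle step. You claim that, for a locally coherent Grothendieck category, the assignment $\mathcal{S} \mapsto \mathcal{S}^\perp$ gives a bijection between Serre subcategories of $\mod{\Tc}$ and \emph{all} definable subcategories of $\Mod{\Tc}$. This is not what Herzog and Krause prove, and it is false in general: already in $\Mod{\mathbb{Z}}$ the definable subcategory $\{M \mid 2M = 0\}$ is not a torsion-free class, so it is not of the form $\mathcal{S}^\perp$ for any Serre subcategory $\mathcal{S}$ of $\mod{\mathbb{Z}}$. The Herzog--Krause correspondence is between Serre subcategories of $\fp{\mathcal{L}}$ and hereditary torsion pairs of finite type in $\mathcal{L}$; these torsion-free classes are definable, but they form a strictly smaller collection than all definable subcategories. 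In the triangulated setting this is exactly the content of Krause's result restated as Proposition \ref{prop: def-torsion} in the paper: Serre subcategories of $\mod{\Tc}$ correspond to definable subcategories of $\T$, not to definable subcategories of $\Mod{\Tc}$.

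Because of this, the inclusion $\mathcal{Y}_\mathcal{E}^\perp \subseteq \Def{\Mod{\Tc}}{\y\mathcal{E}}$ does not follow from what you wrote. To repair the argument you would need to argue that $\Def{\Mod{\Tc}}{\y\mathcal{E}}$ actually lies in the image of the restricted bijection, i.e.\ that it is a hereditary torsion-free class of finite type. The natural route is: (i) observe that the fp-injective objects of $\Mod{\Tc}$ form a definable subcategory, so $\Def{\Mod{\Tc}}{\y\mathcal{E}}$ is contained in the fp-injectives because every $\y M$ is fp-injective; (ii) show that a definable subcategory of $\Mod{\Tc}$ consisting of fp-injectives is a hereditary torsion-free class of finite type. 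Step (ii) in particular is a genuine input (it amounts to the compatibility between the definable and torsion-theoretic pictures for fp-injective functors over a triangulated category) and cannot be waved away as ``bookkeeping.'' Note that the paper itself does not prove this lemma: it simply cites \cite[Cor.~4.4]{AMV}, so there is no in-paper argument to compare against, but the proposal as written has a real gap at the point where the bijection is invoked.
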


\noindent Let $\mathcal{X}$ be a class of objects in $\Der(\base)$.  We say that $\mathcal{X}$ is \textbf{closed under directed homotopy colimits} if, for all directed categories $I$ and all objects $X$ in $\Der(I)$ such that $X_i \in \mathcal{X}$ for every $i$ in $I$, we have $\hocolim{I}{X}\in \mathcal{X}$.  The following theorem is a triangulated version of \cite[Cor.~4.6]{exactly}:

\begin{theorem}\label{Thm: def cats}
Let $\T$ be the underlying category of a compactly generated derivator $\Der$.  Then, for a full subcategory $\mathcal{D}$ of $\T$, the following statements are equivalent:\begin{enumerate}
\item $\mathcal{D}$ is definable;
\item $\mathcal{D}$ is closed under products, pure subobjects and directed homotopy colimits;
\item $\mathcal{D}$ is closed under pure subobjects and coherent reduced products.
\end{enumerate}
\end{theorem}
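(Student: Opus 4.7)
The plan is to transport the problem between $\T$ and $\Mod{\Tc}$ via the restricted Yoneda functor $\y$ and then invoke the classical characterisation of definable subcategories in a locally finitely presented Grothendieck category (e.g.\ \cite[Thm.~16.1.16]{MR2530988} or \cite[Cor.~4.6]{exactly}). Two ingredients make the transfer tight: Lemma \ref{Lem: defT=defM}, which identifies $\Def{\T}{\mathcal{D}}$ with $\y^{-1}(\Def{\Mod{\Tc}}{\y \mathcal{D}})$, and Remark \ref{Rem: hocolim to dir lim}, which shows that $\y$ converts directed homotopy colimits to direct limits and coherent reduced products to reduced products in $\Mod{\Tc}$.

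For (1) $\Rightarrow$ (2), I would write $\mathcal{D} = \y^{-1}(\mathcal{D}')$ for a definable $\mathcal{D}' \subseteq \Mod{\Tc}$; then $\mathcal{D}'$ is closed under products, pure subobjects, and direct limits, and these closure properties pull back along $\y$ because $\y$ preserves products, sends pure triangles to pure exact sequences by the very definition of a pure triangle, and sends directed homotopy colimits to direct limits by Remark \ref{Rem: hocolim to dir lim}. For (2) $\Rightarrow$ (3), I would invoke Proposition \ref{Prop: reduced product diagram}: a coherent reduced product $\Redu{X} = \hocolim{\mathcal{F}}{\Red{X}}$ is a directed homotopy colimit (the filter $\mathcal{F}$ is directed under reverse inclusion) whose values $\prod_{p \in P} X_p$ are products of objects from $\{X_s\}_{s \in S}$. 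Closure under products and directed homotopy colimits therefore yields closure under coherent reduced products.

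The main direction is (3) $\Rightarrow$ (1), where the task is to show $\Def{\T}{\mathcal{D}} \subseteq \mathcal{D}$. Given $M \in \Def{\T}{\mathcal{D}}$, Lemma \ref{Lem: defT=defM} places $\y M \in \Def{\Mod{\Tc}}{\y \mathcal{D}}$, and the module-theoretic characterisation delivers a pure monomorphism $\y M \hookrightarrow \prod_{s\in S} \y X_s / \mathcal{F}$ with $X_s \in \mathcal{D}$; Remark \ref{Rem: hocolim to dir lim} identifies the target as $\y \Redu{X}$, and $\Redu{X} \in \mathcal{D}$ by hypothesis. The principal obstacle is then to transport this pure embedding back to $\T$: since $\y$ is not full in general, one cannot lift a pure monomorphism from $\Mod{\Tc}$ to $\T$ directly. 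The plan is to route through pure-injective envelopes. Passing to $PE(M)$ and $PE(\Redu{X})$, both of whose $\y$-images are injective in $\Mod{\Tc}$ by Proposition \ref{prop: pure-triangle}, the induced pure mono $\y PE(M) \hookrightarrow \y PE(\Redu{X})$ splits in $\Mod{\Tc}$; the full faithfulness of $\y$ on morphisms into pure-injective targets (Proposition \ref{prop: pure-triangle}(3)), combined with idempotent completeness of $\T$, then realises $PE(M)$ as a direct summand of $PE(\Redu{X})$ in $\T$. A further iteration of coherent ultrapowers (or an \emph{a posteriori} verification that definable subcategories of $\T$ are closed under pure-injective envelopes) places $PE(\Redu{X})$ in $\mathcal{D}$, yielding $PE(M) \in \mathcal{D}$ and hence $M \in \mathcal{D}$ via the pure monomorphism $M \hookrightarrow PE(M)$.
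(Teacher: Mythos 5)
Your proposal is correct and follows essentially the same path as the paper: push everything into $\Mod{\Tc}$ via $\y$, invoke the module-category characterisation of definable subcategories, and recover membership in $\mathcal{D}$ by passing through pure-injective objects. The one structural difference is in $(3)\Rightarrow(1)$: the paper first handles a pure-injective $E\in\Def{\T}{\mathcal{D}}$ directly (producing a pure epimorphism $\Redu{D}\to E$ from a coherent reduced product in $\mathcal{D}$ and concluding $E\in\mathcal{D}$), and then reduces the general case to this one by noting every $X\in\Def{\T}{\mathcal{D}}$ is a pure subobject of its pure-injective envelope, which again lies in $\Def{\T}{\mathcal{D}}$; you instead start from an arbitrary $M$ and compare $PE(M)$ with $PE(\Redu{X})$ at once. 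These are the same argument in different packaging, and both run into the same final ingredient that you explicitly flag: one must place a pure-injective object sitting above the coherent reduced product into $\mathcal{D}$. Your suggested route via a further iteration of coherent ultrapowers is the right way to close it — a suitable coherent ultrapower of $\Redu{X}$ is pure-injective (the same classical fact already exploited in the proof of Proposition \ref{Prop: pure tri}), lies in $\mathcal{D}$ by the closure hypothesis, and has $PE(\Redu{X})$ as a direct summand. One small inaccuracy of phrasing in your $(1)\Rightarrow(2)$ step: that $\y$ turns a pure triangle into a \emph{pure}-exact sequence in $\Mod{\Tc}$ is not ``the very definition''; the definition only gives short exactness, and purity of the resulting sequence follows from the fact that the image of $\y$ consists of fp-injective objects, as the paper records in the proof of Proposition \ref{Prop: pure tri}.
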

\begin{proof}
First note that if $\mathcal{D}$ is a definable subcategory, then $\mathcal{D}$ is closed under products and pure subobjects (this is immediate from \cite[Thm.~A]{MR1899046}).  

$(1) \Rightarrow (2)$:  Let $I$ be a small directed category and let $X$ be an object in $\Der(I)$ with $X_i$ contained in $\mathcal{D}$ for all objects $i$ in $I$.  By Lemma \ref{Lem: defT=defM}, we have that $\hocolim{I}{X}$ is contained in $\mathcal{D}$ if and only if $\y \hocolim{I}{X}$ is contained in $\Def{\Mod{\Tc}}{\y\mathcal{D}}$.  But the latter holds because $\y\hocolim{I}{X} \cong \varinjlim_{i\in I} \y X_i$ and $\Def{\Mod{\Tc}}{\y\mathcal{D}}$ is closed under directed colimits.

$(2) \Rightarrow (3)$: Follows immediately from the definition of coherent reduced products.

$(3) \Rightarrow (1)$:  We will show that $\Def{\T}{\mathcal{D}} \subseteq \mathcal{D}$ and hence that $\mathcal{D} = \Def{\T}{\mathcal{D}}$.  Let $X$ be in $\Def{\T}{\mathcal{D}}$ so that $\y X$ is contained in $\Def{\Mod{\Tc}}{\y\mathcal{D}}$.  By \cite[Cor.~4.10]{exactly}, there exists a proper filter $\mathcal{F}$ on a set $S$ such that there exists a pure monomorphism $\y X \rightarrow \prod_{s\in S}\y D_s / \mathcal{F}$ for some set of objects $\{ D_s\}_{s\in S}$ in $\mathcal{D}$.  In fact, by \cite[Cor.~21.3]{MR2791358} and \cite[Prop.~4.8]{exactly}, we may choose $\prod_{s\in S}\y D_s / \mathcal{F}$ to be pure-injective.  By Remark \ref{Rem: hocolim to dir lim}, we have $\prod_{s\in S}\y D_s / \mathcal{F} \cong \y \Redu{D}$ where $D$ is the object in $\Der(S)$ corresponding to $\{ D_s\}_{s\in S}$.  Note that, by \cite[Lem.~1.6]{MR1728877}, the object $\y\Redu{D}$ is absolutely pure in $\Mod{\Tc}$ (see Section \ref{Sec: Loc coh}).  Since $\y\Redu{D}$ is both absolutely pure and pure-injective, it is injective and so $\Redu{D}$ is pure-injective in $\T$.  The monomorphism $\y X \rightarrow \y\Redu{D}$ is therefore the image of a pure monomorphism $X \to \Redu{D}$ and so, by assumption, we have that $X \in \mathcal{D}$.
\end{proof}

\noindent The following is a triangulated version of \cite[Cor.~4.10]{exactly} and a generalisation of \cite[Thm.~7.5]{MR1899046}:

\begin{corollary}
Let $\mathcal{S}$ be a set of objects in $\T$.  Then $\Def{\T}{\mathcal{S}}$ consists of the collection of pure subobjects of coherent reduced products of objects in $\mathcal{S}$. 
\end{corollary}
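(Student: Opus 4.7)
The plan is to apply Theorem \ref{Thm: def cats} to the class $\mathcal{D}'$ of pure subobjects of coherent reduced products of objects in $\mathcal{S}$, and to essentially reread the proof of implication $(3) \Rightarrow (1)$ in that theorem.

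For the inclusion $\mathcal{D}' \subseteq \Def{\T}{\mathcal{S}}$, I would observe that $\Def{\T}{\mathcal{S}}$ contains $\mathcal{S}$ and, being definable, is closed under coherent reduced products and pure subobjects by Theorem \ref{Thm: def cats}. Therefore any pure subobject of a coherent reduced product of objects in $\mathcal{S}$ already lies in $\Def{\T}{\mathcal{S}}$.

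For the reverse inclusion, let $X \in \Def{\T}{\mathcal{S}}$. Following the proof of $(3) \Rightarrow (1)$ in Theorem \ref{Thm: def cats}, $X$ admits a pure monomorphism into a pure-injective object $E$ in $\Def{\T}{\mathcal{S}}$ (the pure-injective hull belongs to the definable subcategory generated by $X$). Applying the pure-injective step of that proof verbatim, with the role of $\mathcal{D}$ there played by our $\mathcal{S}$, we use Lemma \ref{Lem: defT=defM} together with \cite[Cor.~4.10]{exactly} to produce a pure monomorphism $\y E \to \prod_{s \in S}\y S_s/\mathcal{F}$ for some collection $\{S_s\} \subseteq \mathcal{S}$ and some proper filter $\mathcal{F}$. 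By Remark \ref{Rem: hocolim to dir lim} this is a pure monomorphism $\y E \to \y \Redu{S}$, which splits because $\y E$ is injective, and the splitting then lifts to $\T$ by the pure-injectivity of $E$ (Proposition \ref{prop: pure-triangle}(3)). Thus $E$ is a retract of $\Redu{S}$, and in particular a pure subobject.

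It remains to note that the composition of the pure monomorphisms $X \to E$ and $E \to \Redu{S}$ is itself a pure monomorphism, so that $X$ belongs to $\mathcal{D}'$. This is the only point that is not already contained in the earlier arguments, and it is the mildest obstacle: applying $\y$ to both maps yields short exact sequences in $\Mod{\Tc}$, and an extension-closure argument using that $\y X$ is fp-injective \cite[Lem.~1.6]{MR1728877} shows the composite cokernel again lies in the image of $\y$, identifying the composed map with the first morphism of a pure triangle.
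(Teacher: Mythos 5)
Your argument is correct, but it takes a genuinely different route from the paper's. The paper proves the corollary by showing that the class $\mathcal{D}'$ of pure subobjects of coherent reduced products of objects in $\mathcal{S}$ satisfies condition (3) of Theorem \ref{Thm: def cats} --- that $\mathcal{D}'$ is itself closed under pure subobjects and coherent reduced products --- by invoking \cite[Prop.~4.8]{exactly} together with exactness of products and direct limits in $\Mod{\Tc}$. Then $\mathcal{D}'$ is definable, contains $\mathcal{S}$, and sits inside $\Def{\T}{\mathcal{S}}$ by the closure properties of definable classes, so the two coincide. You instead prove the reverse inclusion $\Def{\T}{\mathcal{S}} \subseteq \mathcal{D}'$ constructively, by re-running the $(3)\Rightarrow(1)$ step of Theorem \ref{Thm: def cats} with $\mathcal{S}$ in the role of $\mathcal{D}$: embed $X$ purely into a pure-injective $E \in \Def{\T}{\mathcal{S}}$, realise $E$ as a retract of a coherent ultrapower via \cite[Cor.~4.10]{exactly} and Remark \ref{Rem: hocolim to dir lim}, and compose. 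The paper's argument is shorter because it delegates everything to a closure check; yours is more explicit about how the pure embedding into a coherent reduced product is actually produced. One small point: the final ``extension-closure'' paragraph is more involved than needed. A composite of pure monomorphisms is automatically a pure monomorphism, simply because $\y$ preserves composition, a composite of monomorphisms in $\Mod{\Tc}$ is a monomorphism, and $\y$ commutes with the shift, so the cone triangle of the composite is seen to be pure directly from the long exact sequence.
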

\begin{proof}
It follows from \cite[Prop.~4.8]{exactly} (and the fact that products and directed colimits are exact in $\Mod{\Tc}$) that the collection of pure subobjects of reduced products of objects in $\mathcal{S}$ is closed under pure subobjects and coherent reduced products.
\end{proof}

\begin{corollary}\label{Cor: def subcat lift}
Let $\mathcal{V}$ be a definable subcategory of $\T$ and let $A$ be a small category.  Then \[ \mathcal{V}_A := \{ X \mid X_a \in \mathcal{V} \text{ for all objects } a \text{ in } A\}\] is a definable subcategory of $\Der(A)$.
\end{corollary}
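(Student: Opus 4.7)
The plan is to apply Theorem \ref{Thm: def cats} to the shifted derivator $\Der^A$, which is compactly generated by the corollary to Lemma \ref{Lem: hcg shifts}. It will therefore suffice to verify that $\mathcal{V}_A$ is closed in $\Der(A) = \Der^A(\base)$ under products, pure subobjects and directed homotopy colimits. I will handle each of the three closure conditions by combining the corresponding closure property of $\mathcal{V}$ (guaranteed because $\mathcal{V}$ is already definable) with the observation that every evaluation functor $a^* \colon \Der(A) \to \Der(\base)$ commutes with the operation in question.

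For products, the functor $a^*$ has a left adjoint $a_!$, hence preserves products, and so $(\prod_i X_i)_a \cong \prod_i (X_i)_a \in \mathcal{V}$ whenever each $X_i \in \mathcal{V}_A$. For pure subobjects, I would take a pure triangle $X \to Y \to Z \to X[1]$ in $\Der(A)$ with $Y, Z \in \mathcal{V}_A$ and invoke Corollary \ref{Cor: shifted pure tri} to conclude that each evaluated triangle $X_a \to Y_a \to Z_a \to X_a[1]$ is pure in $\T$; then closure of $\mathcal{V}$ under pure subobjects gives $X_a \in \mathcal{V}$, so $X \in \mathcal{V}_A$.

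The step requiring slightly more care is closure under directed homotopy colimits. Given a small directed $I$ and an object $X \in \Der^A(I)$ with every value in $\mathcal{V}_A$, I need $(\hocolim{I}{X})_a \in \mathcal{V}$ for each $a \in A$. The relevant commutation is a consequence of the base change property of derivators applied to the pullback square
\[
\xymatrix{
\base \times I \ar[r]^{a \times \id_I} \ar[d]_{\pi_I} & A \times I \ar[d]^{\id_A \times \pi_I} \\
\base \ar[r]_{a} & A
}
\]
which is identical in form to the one used in Remark \ref{Rem: shift red prod}. This yields a natural isomorphism $(\hocolim{I}{X})_a \cong \hocolim{I}{(a \times \id_I)^* X}$, and the restricted diagram has all values $(X_i)_a$ lying in $\mathcal{V}$. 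Since $\mathcal{V}$ is closed under directed homotopy colimits (by Theorem \ref{Thm: def cats}), the homotopy colimit lies in $\mathcal{V}$, completing the check.

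No real obstacle is anticipated: once the base change identification is in hand --- a special case of it is already invoked in Remark \ref{Rem: shift red prod} for filter categories, and the general statement is \cite[Prop.~2.5]{Der} --- each of the three closure conditions reduces pointwise to the corresponding closure property of $\mathcal{V}$.
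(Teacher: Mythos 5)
Your proof is correct and follows essentially the same strategy as the paper: apply Theorem \ref{Thm: def cats} to the compactly generated shifted derivator $\Der^A$ and verify the closure conditions pointwise via compatibility of evaluation functors. The only cosmetic difference is that the paper invokes closure conditions (3) (pure subobjects and coherent reduced products, with Remark \ref{Rem: shift red prod} supplying the pointwise identification of coherent reduced products) whereas you check conditions (2) separately for products, pure subobjects and directed homotopy colimits; both reductions rest on the same base change isomorphism and Corollary \ref{Cor: shifted pure tri}.
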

\begin{proof}
By Remark \ref{Rem: shift red prod} and Corollary \ref{Cor: shifted pure tri}, the class $\mathcal{V}_A$ satisfies the closure conditions $(3)$ in Theorem \ref{Thm: def cats}.
\end{proof}

\section{Smashing t-structures with Grothendieck hearts.}\label{Sec: GrothHearts}

In this section we consider two kinds of smashing t-structure with Grothendieck hearts: the homotopically smashing t-structures \cite{SSV} and the pure-injective cosilting t-structures \cite{AMV}.  We follow the style of the definitions given in \cite{AMV}; in particular, a t-structure is a torsion pair in $\T$.

Let $\T$ be a triangulated category.  A \textbf{t-structure} \cite{BBD} on $\T$ is a pair ${\bf t} = (\mathcal{U}, \mathcal{V})$ of full subcategories satisfying the following conditions: \begin{description}
\item[(t1)] $\Hom{\T}(U, V) = 0$ for all $U \in \mathcal{U}$ and $V \in \mathcal{V}$;
\item[(t2)] $\Sigma \mathcal{U} \subseteq \mathcal{U}$ and $\mathcal{V} \subseteq \Sigma\mathcal{V}$;
\item[(t3)] For each object $X$ in $\T$, there exists a triangle $U \rightarrow X \rightarrow V \rightarrow \Sigma U$ where $U \in \mathcal{U}$ and $V \in \mathcal{V}$.
\end{description}

The \textbf{heart} of ${\bf t}$ is defined to be $\mathcal{G} := \Sigma^{-1}\mathcal{U}\cap\mathcal{V}$.  By \cite{MR976638}, the inclusion $\mathcal{U} \rightarrow \T$ (respectively $\mathcal{V} \rightarrow \T$) has a right adjoint $\tau_\mathcal{U} \colon \T \rightarrow \mathcal{U}$ (respectively has a left adjoint $\tau_\mathcal{V} \colon \T \rightarrow \mathcal{V}$).  These are called the \textbf{truncation functors} and the triangles in $\bf{(t3)}$ are given by: \[ \tau_\mathcal{U}(X) \rightarrow X \rightarrow \tau_\mathcal{V}(X) \rightarrow \Sigma( \tau_\mathcal{U}(X)).  \]  The associated cohomological functor to the heart $H_{{\bf t}}^0 \colon \T \rightarrow \mathcal{G}$ is defined to be \[H_{{\bf t}}^0 := \tau_\mathcal{V}\circ\Sigma^{-1}\circ\tau_\mathcal{U}\circ\Sigma = \Sigma^{-1}\circ\tau_\mathcal{U}\circ\Sigma\circ\tau_{\mathcal{V}}.\]  

We say that ${\bf t} = (\mathcal{U}, \mathcal{V})$ is \textbf{left nondegenerate} (respectively \textbf{right nondegenerate}) if $\bigcap_{i\in\mathbb{Z}} \Sigma^i\mathcal{U} = \{0\}$ (respective if $\bigcap_{i\in\mathbb{Z}} \Sigma^i\mathcal{V} = \{0\}$).  If ${\bf t} $ is both right and left nondegenerate then we say that ${\bf t} $ is \textbf{nondegenerate}.

We say that ${\bf t}$ is \textbf{smashing} if the class $\mathcal{V}$ is closed under coproducts.  For any smashing t-structure, the associated cohomological functor $H^0_{{\bf t}} \colon \T \rightarrow \mathcal{G}$ preserves coproducts in $\T$ (see \cite[Lem.~3.3]{AMV}).

\subsection{Homotopically smashing t-structures.}\label{Sec: homo smash}

The notion of a homotopically smashing t-structure was introduced in \cite{SSV}.  Let $\Der$ be a strong and stable derivator and let ${\bf t} = (\mathcal{U}, \mathcal{V})$ be a t-structure on $\Der(\base)$.  Then ${\bf t} $ is called \textbf{homotopically smashing} with respect to $\Der$ if $\mathcal{V}$ is closed under directed homotopy colimits.

\begin{remark}
Similar t-structures have been considered in the context of presentable stable $\infty$-categories.  See \cite[Rem.~1.3.5.23]{HA}.
\end{remark}

\begin{theorem}[{\cite[Thm.~A]{SSV}}] 
Let $\Der$ be a strong and stable derivator and let ${\bf t} = (\mathcal{U}, \mathcal{V})$ be a t-structure on $\Der(\base)$.  If ${\bf t} $ is homotopically smashing, then the heart $\mathcal{G}$ has exact directed colimits.
\end{theorem}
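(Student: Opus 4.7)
The plan is to realise directed colimits in $\mathcal{G}$ as $H^0_{\bf t}$ applied to directed homotopy colimits in $\T$, verify that the resulting functor is left adjoint to the constant-diagram functor (hence automatically right exact), and then obtain left exactness from the long exact sequence for $H^*_{\bf t}$ induced by a hocolim of triangles. Given a small directed category $I$ and a diagram $X \colon I \to \mathcal{G}$, the first step is to coherently lift $X$ to some $\tilde X \in \Der(I)$ with $\tilde X_i \in \mathcal{G}$ pointwise. Since $\mathcal{G} \subseteq \mathcal{V}$ and $\mathcal{V}$ is closed under directed homotopy colimits by hypothesis, $W := \hocolim{I}{\tilde X}$ lies in $\mathcal{V}$, and $H^0_{\bf t}(W) \in \mathcal{G}$ then represents $\varinjlim_I X$ as witnessed by the chain of natural isomorphisms
\[ \Hom{\mathcal{G}}(H^0_{\bf t}(W), Y) \cong \Hom{\T}(W, Y) \cong \Hom{\Der(I)}(\tilde X, \pi^* Y) \cong \varprojlim_i \Hom{\mathcal{G}}(X_i, Y) \]
for $Y \in \mathcal{G}$; the first uses the truncation triangle for $W$, whose third term lies in $\mathcal{V}[-1]$ and so is right-orthogonal together with its shift to $\mathcal{G}$, the second is the adjunction $\hocolimf{I} \dashv \pi^*$, and the third is a derivator-theoretic identification for maps into a constant diagram.

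For exactness, take a pointwise short exact sequence $0 \to X' \to X \to X'' \to 0$ in $\mathcal{G}^I$. Using stability of $\Der$, compatible coherent lifts may be chosen to fit into a triangle $\tilde X' \to \tilde X \to \tilde X'' \to \tilde X'[1]$ in $\Der(I)$ with the first three terms pointwise in $\mathcal{G}$, and applying the exact functor $\hocolimf{I}$ yields a triangle $W' \to W \to W'' \to W'[1]$ in $\T$ with $W', W, W'' \in \mathcal{V}$. The associated long exact sequence for $H^*_{\bf t}$ gives in particular
\[ H^{-1}_{\bf t}(W'') \to H^0_{\bf t}(W') \to H^0_{\bf t}(W) \to H^0_{\bf t}(W''), \]
and since $W''[-1] \in \mathcal{V}[-1]$ lies in the strictly positive part of the t-structure, $H^{-1}_{\bf t}(W'') = 0$ and hence $H^0_{\bf t}(W') \to H^0_{\bf t}(W)$ is injective. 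Surjectivity of $H^0_{\bf t}(W) \to H^0_{\bf t}(W'')$ is automatic because $\varinjlim_I$ is a left adjoint and $X_i \to X''_i$ is pointwise surjective in $\mathcal{G}$, and middle exactness comes directly from the long exact sequence, yielding the required short exact sequence of direct limits.

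I expect the principal obstacle to be the coherent lifting step: the strength axiom of $\Der$ directly supplies lifts only for finite free indexing categories, whereas the directed $I$ is not finite and free in general, so a substantive derivator-theoretic construction (in the spirit of the one developed in Appendix A for coherent reduced products) is required. A secondary technical point is the justification of $\Hom{\Der(I)}(\tilde X, \pi^* Y) \cong \varprojlim_i \Hom{\mathcal{G}}(X_i, Y)$, which needs genuine use of the derivator axioms rather than being a categorical formality.
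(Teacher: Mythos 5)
This theorem is cited from \cite[Thm.~A]{SSV}; the paper gives no proof of its own, so the comparison is against the argument in that reference. Your outline is along the right lines, but there is a variance error that creates a genuine gap. You correctly note that the cone $V'$ of $H^0_{\bf t}(W) \to W$ lies in $\mathcal{V}[-1]$ and is \emph{right}-orthogonal to $\mathcal{G}$; however, $\Hom{\T}(\mathcal{G}, V') = 0 = \Hom{\T}(\mathcal{G}, V'[-1])$ yields $\Hom{\T}(Y, H^0_{\bf t}(W)) \cong \Hom{\T}(Y, W)$, exhibiting $H^0_{\bf t}|_\mathcal{V}$ as the \emph{right} adjoint of the inclusion $\mathcal{G} \hookrightarrow \mathcal{V}$. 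The first isomorphism you actually write, $\Hom{\mathcal{G}}(H^0_{\bf t}(W), Y) \cong \Hom{\T}(W, Y)$, has the opposite variance and would require $\Hom{\T}(\mathcal{V}[-1], \mathcal{G}) = 0$, which fails: in $\D{\mathbb{Z}}$ with the standard t-structure, $W = \mathbb{Z}/2[-1] \in \mathcal{V}$ has $H^0_{\bf t}(W) = 0$, yet $\Hom{\T}(W, \mathbb{Z}) \cong \Ext{\mathbb{Z}}(\mathbb{Z}/2, \mathbb{Z}) \neq 0$. As written, the universal property is not established, and the exactness paragraph inherits the gap since it presupposes that $\varinjlim$ exists as a left adjoint.

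The missing ingredient is that the aisle $\mathcal{U}$, and hence $\mathcal{U}[-1]$, is \emph{automatically} closed under directed homotopy colimits in any strong stable derivator: aisles are closed under coproducts, cones and positive shifts, and directed hocolims are built from these. (This is one of the key structural results in \cite{SSV}, independent of the homotopically smashing hypothesis.) Granting it, $W = \hocolim{I}{\tilde X}$ lies in $\mathcal{U}[-1] \cap \mathcal{V} = \mathcal{G}$ already, so $H^0_{\bf t}(W) = W$, the problematic isomorphism is an identity, and the adjunction you need is simply $\hocolimf{I} \dashv \pi^*$. The exactness step also becomes cleaner: the triangle $W' \to W \to W'' \to W'[1]$ has all of $W'$, $W$, $W''$ in $\mathcal{G}$, and a triangle whose first three terms lie in the heart is precisely a short exact sequence in $\mathcal{G}$, so no long-exact-sequence bookkeeping is required. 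The two issues you flag at the end --- coherently lifting $X \in \mathcal{G}^I$ to $\Der(I)$, and the identification $\Hom{\Der(I)}(\tilde X, \pi^* Y) \cong \varprojlim_i \Hom{\T}(\tilde X_i, Y)$ --- are indeed where the substantive derivator-theoretic work in \cite{SSV} lies; the latter in particular is not a formality and uses essentially that $\tilde X_i$ and $Y$ lie in the heart so that the negative Hom-contributions in the relevant descent/$\lim$-spectral sequence vanish.
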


In fact, it is shown in \cite{SSV} that a homotopically smashing t-structure on the homotopy category of a combinatorial model category has a Grothendieck heart.  Next we show that if the derivator in question is compactly generated, then we do not need these additional assumptions to obtain the generators.

\begin{lemma}\label{Lemma: homo smash implies Grothendieck}
Let $\Der$ be a compactly generated derivator and let ${\bf t} = (\mathcal{U}, \mathcal{V})$ be a t-structure on $\T := \Der(\base)$. If ${\bf t} $ is homotopically smashing then the heart $\mathcal{G}$ is a Grothendieck category.
\end{lemma}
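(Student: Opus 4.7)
The plan is to verify the classical criteria for $\mathcal{G}$ to be Grothendieck: abelian, AB5, cocomplete and with a generator. That $\mathcal{G}$ is abelian is standard, and the cited \cite[Thm.~A]{SSV} supplies AB5 directly from the homotopically smashing hypothesis. For coproducts, I would first promote homotopically smashing to smashing: every coproduct $\bigsqcup_{s \in S} X_s$ in $\Der(\base)$ is the directed homotopy colimit, indexed over finite subsets $P \subseteq S$, of the finite coproducts $\bigoplus_{p \in P} X_p$, each of which lies in $\mathcal{V}$ because triangulated subcategories are closed under finite direct sums. Closure of $\mathcal{V}$ under directed hocolims then yields closure under arbitrary coproducts, and since aisles are always closed under coproducts (from $\mathcal{U} = {}^\perp \mathcal{V}$, using that $\Hom{\T}(\bigsqcup U_i, V) = \prod \Hom{\T}(U_i, V)$), the heart $\mathcal{G} = \mathcal{U}[-1] \cap \mathcal{V}$ inherits all coproducts from $\T$ and is thus cocomplete.

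The remaining task is to produce a generating set. My candidate is $\mathcal{S} = \{H^0_{{\bf t}}(C) : C \in \Tc\}$, which is a set because $\Tc$ is skeletally small (and closed under shifts). The key observation is that for $C \in \mathcal{U}[-1]$ the shifted truncation $\tau_{\mathcal{V}[-1]}(C)$ vanishes, hence $\tau_\mathcal{V}(C) \in \mathcal{U}[-1] \cap \mathcal{V} = \mathcal{G}$ and in fact $\tau_\mathcal{V}(C) = H^0_{{\bf t}}(C)$; the adjunction $\tau_\mathcal{V} \dashv \iota_\mathcal{V}$ together with $X \in \mathcal{V}$ then yields the clean identity
\[ \Hom{\T}(C, X) \cong \Hom{\mathcal{G}}\bigl(H^0_{{\bf t}}(C), X\bigr) \]
for every $X \in \mathcal{G}$ and every $C \in \Tc \cap \mathcal{U}[-1]$. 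Generation therefore reduces to showing that $\Tc \cap \mathcal{U}[-1]$ detects every non-zero object of $\mathcal{G}$: given $0 \neq X \in \mathcal{G}$, compact generation of $\T$ supplies some $D \in \Tc$ with $\Hom{\T}(D, X) \neq 0$, and one must extract from $D$ a compact witness already lying in $\mathcal{U}[-1]$.

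The main obstacle is this extraction. A bare shift $D[k]$ will not land in $\mathcal{U}[-1]$ unless $D$ is bounded above in the $t$-structure, which is not part of the hypothesis. My plan is to analyse the truncation triangle $H^0_{{\bf t}}(D) \to \tau_\mathcal{V}(D) \to \tau_{\mathcal{V}[-1]}(D) \to H^0_{{\bf t}}(D)[1]$ and the long exact sequence it induces after $\Hom{\T}(-, X)$: the non-zero morphism $\tau_\mathcal{V}(D) \to X$ either descends to a non-zero map out of $H^0_{{\bf t}}(D)$, which (after approximating $H^0_{{\bf t}}(D)$ in $\mathcal{G}$ by images of compact objects in $\mathcal{U}[-1]$) yields the required generator, or else it factors through $\tau_{\mathcal{V}[-1]}(D) \cong \tau_\mathcal{V}(D[1])[-1]$, whereupon the analysis repeats with $D[1]$ in place of $D$. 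Combining this iteration with compact generation, closure of $\mathcal{V}$ under directed hocolims, and the orthogonality $\Hom{\T}(\mathcal{U}, \mathcal{V}) = 0$ should force the process to terminate; carrying out this book-keeping carefully is the delicate technical core of the lemma.
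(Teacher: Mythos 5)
Your argument for abelianity, AB5 (via \cite[Thm.~A]{SSV}), and cocompleteness (promoting homotopically smashing to smashing by writing a coproduct as a directed homotopy colimit of its finite subsums, and noting aisles are always closed under coproducts) is sound and consistent with the paper. The generating-set step, however, contains a genuine gap that you yourself flag. You want to replace a compact $D$ with $\Hom{\T}(D,X)\neq 0$ by a compact lying in $\mathcal{U}[-1]$, but there is no reason for $\Tc\cap\mathcal{U}[-1]$ to be large enough: the t-structure is not assumed to interact with $\Tc$ at all (it is not a compactly generated t-structure, nor even nondegenerate), so that intersection could be too small to detect $\mathcal{G}$. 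Moreover $\mathcal{U}[-1]$ is closed under \emph{positive} shifts, so your proposed iteration $D\mapsto D[1]$ moves in the wrong direction and has no termination mechanism; what you would need is a bound on how far down one must shift a given compact to land in $\mathcal{U}[-1]$, which is not available under the hypotheses.

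The paper avoids the obstacle entirely by working in $\Mod{\Tc}$ instead of hunting for compacts inside the aisle. Since $\{\y C : C\in\Tc\}$ generates $\Mod{\Tc}$, for each $X\in\mathcal{G}$ there is an epimorphism $\gamma\colon\bigoplus_{i\in I}\y C_i\to\y X$ with $C_i$ compact. As $\bigoplus_i C_i$ is pure-projective and $\y(\bigoplus_i C_i)\cong\bigoplus_i\y C_i$, Proposition~\ref{prop: pure-triangle} gives $\gamma=\y g$ for some morphism $g\colon\bigoplus_i C_i\to X$ in $\T$, which is then a pure epimorphism. Because $\mathcal{G}$ has exact direct limits and $H^0_{\bf t}$ preserves coproducts, $H^0_{\bf t}$ sends pure epimorphisms to epimorphisms in $\mathcal{G}$ (\cite[Cor.~2.5]{MR1728877}); hence $H^0_{\bf t}(g)\colon\bigoplus_i H^0_{\bf t}(C_i)\to X$ is an epimorphism, so $\{H^0_{\bf t}(C):C\in\Tc\}$ generates $\mathcal{G}$ --- with no constraint whatsoever on where the $C_i$ sit relative to the t-structure. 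This pure-projective lifting step through $\Mod{\Tc}$ is the missing ingredient in your outline.
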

\begin{proof}
Since $\Der(\base)$ has arbitrary coproducts, it follows that $\mathcal{G}$ has arbitrary coproducts.  Also, follows from the previous theorem that $\mathcal{G}$ has exact directed colimits and so it remains to show that $\mathcal{G}$ has a set of generators.  Consider the set \[\mathcal{C} :=\{ H^0_{{\bf t}}(C) \mid C \text{ in } \Tc\}\] of objects in $\mathcal{G}$.  We will show that $\mathcal{C}$ generates $\mathcal{G}$.  Let $X$ be an object in $\mathcal{G}$ and consider $\y X$ in $\Mod{\Tc}$.  Since $\{\y C \mid C \text{ in } \Tc\}$ is a generatoring set for $\Mod{\Tc}$, there exists an epimorphism $\gamma\colon \bigoplus_{i\in I} \y C_i \rightarrow \y X \rightarrow 0$ for some set $\{C_i\}_{i\in I}$ of compact objects.  But then $\bigoplus_{i\in I} \y C_i \cong \y \left( \bigoplus_{i\in I}  C_i\right)$ and $\bigoplus_{i\in I}  C_i$ is pure-projective so $\gamma = \y g$ for some pure epimorphism $g \colon \bigoplus_{i\in I}  C_i \rightarrow X$ in $\Der(\base)$.  Since $\mathcal{G}$ has exact directed colimits and $H^0_{{\bf t}}$ preserves coproducts, we have that $H^0_{{\bf t}}$ sends pure epimorphisms to epimorphisms in $\mathcal{G}$ by \cite[Cor.~2.5]{MR1728877}.  Therefore we have that $H^0_{{\bf t}}(g) \colon \bigoplus_{i\in I} H^0_{{\bf t}}(C_i) \rightarrow X$ is an epimorphism and we have shown that $\mathcal{C}$ generates $\mathcal{G}$.
\end{proof}

\subsection{Partial cosilting t-structures.}\label{Sec: cosilt}

The cosilting t-structures can be described as particular perpendicular classes of an object. Given an object $M$ in $\T$ and a subset $I$ of $\mathbb{Z}$, we define perpendicular classes as follows: \[^{\perp_I}M := \{Y \in \T \mid \Hom{\T}(Y, \Sigma^iM) = 0 \text{ for all } i \in I\}\] and  \[M{^{\perp_I}} := \{Y \in \T \mid \Hom{\T}(M,\Sigma^iY) = 0 \text{ for all } i \in I\}.\] In what follows we will represent the set $\{ i \in \mathbb{Z} \mid i < 0\}$ by the symbol $<0$; similarly for $\leq0$, $\geq 0$ and $>0$.  Also, if $I = \{i\}$ we will simply write $\perp_i$.   This notation also applies to objects in an abelian category where Hom-spaces should be replaced by Ext-groups in the obvious way. 

An object $C$ in $\T$ is called \textbf{cosilting} if $({^{\perp_{\leq0}}}C, {^{\perp_{>0}}}C)$ defines a t-structure (which implies, in particular, that $C$ lies in ${^{\perp_{>0}}}C$).  A t-structure of the form $({^{\perp_{\leq0}}}C, {^{\perp_{>0}}}C)$ will be referred to as a \textbf{cosilting t-structure}.  We define an object $C$ in $\T$ to be \textbf{partial cosilting} if ${^{\perp_{>0}}}C$ is a coaisle and $C$ lies in ${^{\perp_{>0}}}C$; the corresponding t-structure ${\bf t} = (\mathcal{U}, {^{\perp_{>0}}}C)$ is called a \textbf{partial cosilting t-structure}.  Given a (partial) cosilting object $C$ we will denote the heart of the corresponding t-structure by $\heart{C}$.  A (partial) cosilting object $C$ is called \textbf{(partial) cotilting} if $\Prod{C} \subseteq \heart{C}$.   We say that two partial cosilting objects are \textbf{equivalent} if they give rise to the same t-structure.

In \cite[Thm.~3.6]{AMV} the authors show that the cosilting t-structures with pure-injective cosilting object parametrise the nondegenerate smashing t-structures with Grothendieck hearts.  In Theorem \ref{Thm: groth hearts}, we will make use of the following modification:

\begin{lemma}\label{Lem: weak AMV}
Let $\T$ be a compactly generated triangulated category and let ${\bf t} = (\mathcal{U}, \mathcal{V})$ be a left nondegenerate t-structure on $\T$.  If ${\bf t} $ is smashing and the heart is a Grothendieck category, then ${\bf t}$ is partial cosilting for a pure-injective partial cosilting object $C$.
\end{lemma}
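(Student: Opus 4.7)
The plan is to construct the partial cosilting object $C$ by Brown representability, adapting the approach of \cite[Thm.~3.6]{AMV} to the left nondegenerate setting. Since $\mathcal{G}$ is Grothendieck, fix an injective cogenerator $E$ of $\mathcal{G}$. Because ${\bf t}$ is smashing, the cohomological functor $H^0_{\bf t} \colon \T \to \mathcal{G}$ preserves coproducts, so the composite $F := \Hom{\mathcal{G}}(H^0_{\bf t}(-), E) \colon \T^\op \to \Ab$ is cohomological and sends coproducts to products. Since $\T$ is compactly generated, Brown representability yields an object $C \in \T$ with $\Hom{\T}(-, C) \cong \Hom{\mathcal{G}}(H^0_{\bf t}(-), E)$.

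I would first dispatch the easier properties of $C$. For pure-injectivity, the coproduct-preserving cohomological functor $H^0_{\bf t}$ factors through the restricted Yoneda embedding as $H^0_{\bf t} = \bar{H} \circ \y$ for a unique exact, coproduct-preserving $\bar{H}\colon \Mod{\Tc} \to \mathcal{G}$ (by the universal property recorded in \cite{MR1728877}). Hence $\y C \cong \Hom{\mathcal{G}}(\bar{H}(-), E)$ is the composition of an exact functor with the exact contravariant functor $\Hom{\mathcal{G}}(-, E)$; thus $\y C$ is exact on $\Mod{\Tc}$, so injective, and Proposition \ref{prop: pure-triangle} gives that $C$ is pure-injective. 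The inclusion $\mathcal{V} \subseteq {^{\perp_{>0}}}C$ follows from $\Hom{\T}(V, C[n]) \cong \Hom{\mathcal{G}}(H^{-n}_{\bf t}(V), E) = 0$ for $V \in \mathcal{V}$ and $n > 0$, using that the negative cohomology functors vanish on $\mathcal{V}$. The same calculation applied to $U \in \mathcal{U}$ (using $H^0_{\bf t}(U) = 0$) yields $C \in \mathcal{U}^\perp = \mathcal{V}$, and in particular $C \in {^{\perp_{>0}}}C$.

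The main obstacle is to establish the reverse inclusion ${^{\perp_{>0}}}C \subseteq \mathcal{V}$, where left nondegeneracy enters. For $X \in {^{\perp_{>0}}}C$, the truncation triangle together with $\tau_\mathcal{V}(X) \in \mathcal{V} \subseteq {^{\perp_{>0}}}C$ places $U := \tau_\mathcal{U}(X)$ inside $\mathcal{U} \cap {^{\perp_{>0}}}C$; the cohomology of $U$ vanishes in every degree, since $U \in \mathcal{U}$ kills nonnegative degrees and the isomorphism $\Hom{\mathcal{G}}(H^{-n}_{\bf t}(U), E) \cong \Hom{\T}(U, C[n]) = 0$ for $n > 0$ together with $E$ cogenerating kills the negative ones. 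I would then show by induction on $k \geq 0$ that $U \in \mathcal{U}[k]$: assuming $U[-k] \in \mathcal{U}$, shift the truncation triangle for $U[-k-1]$ by $[1]$ to obtain $\tau_\mathcal{U}(U[-k-1])[1] \to U[-k] \to \tau_\mathcal{V}(U[-k-1])[1]$, which forces $\tau_\mathcal{V}(U[-k-1])[1] \in \mathcal{U} \cap \mathcal{V}[1] = \mathcal{G}[1]$; now the formula $H^0_{\bf t}(Y) = \tau_\mathcal{V}(\tau_\mathcal{U}(Y[1])[-1])$ applied to $Y = U[-k-1]$, combined with $\tau_\mathcal{U}(U[-k]) = U[-k]$, identifies $\tau_\mathcal{V}(U[-k-1])$ with $H^{-k-1}_{\bf t}(U) = 0$, giving $U[-k-1] \in \mathcal{U}$. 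By left nondegeneracy $U \in \bigcap_{k \geq 0} \mathcal{U}[k] = 0$, so $X \cong \tau_\mathcal{V}(X) \in \mathcal{V}$, which shows ${^{\perp_{>0}}}C = \mathcal{V}$ and thus that ${\bf t}$ is the partial cosilting t-structure associated with the pure-injective object $C$.
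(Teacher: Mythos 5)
Your proposal is correct and follows essentially the same route as the paper: both construct $C$ by applying Brown representability to $\Hom{\mathcal{G}}(H^0_{\bf t}(-), E)$ for an injective cogenerator $E$, obtain pure-injectivity by factoring $H^0_{\bf t}$ through $\Mod{\Tc}$, and use left nondegeneracy to identify $\mathcal{V}$ with ${^{\perp_{>0}}}C$. The difference is only in exposition: the paper cites the proof of \cite[Thm.~3.6]{AMV} for the construction and asserts the nondegeneracy characterisation of $\mathcal{V}$ in one line, while you spell out both (your induction argument that $U \in \bigcap_{k\geq 0}\mathcal{U}[k]$ is exactly the content hidden in that one-line assertion), so this is an expanded version of the paper's proof rather than an alternative.
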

\begin{proof}
Let $E$ be an injective cogenerator of $\mathcal{G}$.  By the proof of {\cite[Thm.~3.6]{AMV}}, the functor $\Hom{\mathcal{G}}(H^0_{{\bf t}}(-), E)$ is naturally isomorphic to the functor $\Hom{\T}(-,C)$ for a pure-injective object $C$.  Since ${\bf t}$ is left nondegenerate, we have that $\mathcal{V} = \{ X \in \T \mid H^0_{\bf t}(\Sigma^iX) = 0 \text{ for all } p <0\}$ and so $\mathcal{V} = {^{\perp_{>0}}C}$.  To see that $C \in {^{\perp_{>0}}C}$, let $U \in \mathcal{U}$.  Then $\Hom{\T}(U,C) \cong \Hom{\mathcal{G}}(H^0_{{\bf t}}(U), E)$.  But $H^0_{{\bf t}}(U) = 0$ and so $C \in \mathcal{U}^{\perp_0} = {^{\perp_{>0}}C}$.
\end{proof}

The next lemma follows immediately from \cite[Lem.~4.8]{AMV}; we include a different proof that makes use of the closure properties given in Theorem \ref{Thm: def cats}.

\begin{lemma}\label{Lem: pure inj oth def}
Let $\Der$ be a compactly generated derivator and let $C$ be a pure-injective object in $\mathcal{T} := \Der(\base)$.  If $\mathcal{V} := {^{\perp_{>0}}}C$ is closed under products, then $\mathcal{V}$ is a definable subcategory.
\end{lemma}
\begin{proof} 

We show that $\mathcal{V}$ has the closure properties given in Theorem \ref{Thm: def cats}.  By assumption $\mathcal{V}$ is closed under products.

Next we show that $\mathcal{V}$ is closed under pure subobjects.  So suppose $Y \in \mathcal{V}$ and consider a pure monomorphism $X \rightarrow Y$.  As $\Sigma^jC$ is pure-injective for all $j >0$, we have an induced epimorphism $ \Hom{\T}(Y, \Sigma^jC) \rightarrow \Hom{\T}(X, \Sigma^jC) \rightarrow 0$ in $\Ab$.  Then, since we have $\Hom{\T}(Y, \Sigma^jC) = 0$, it follows that $X \in {^{\perp_{>0}}}C = \mathcal{V}$.

Finally we must show that $\mathcal{V}$ is closed under directed homotopy colimits.  Suppose $I$ is a small directed category and let $X$ be an object in $\Der(I)$ with $X_i \in \mathcal{V}$ for all $i \in I$.  Then $\hocolim{I}{X} \in \mathcal{V}$ if and only if $\Hom{\T}( \hocolim{I}{X}, \Sigma^jC) = 0$ for each $j > 0$.  Note that  \begin{align*} \Hom{\T}( \hocolim{I}{X}, \Sigma^jC) & \cong \Hom{\Mod{\Tc}}(\y (\hocolim{I}{X}), \y (\Sigma^jC)) \\  &\cong \Hom{\Mod{\Tc}}(\varinjlim_{i\in I}\y (X_i), \y( \Sigma^jC)). \end{align*} 

Since we have $ 0 = \Hom{\T}(X_i, \Sigma^jC) \cong \Hom{\Mod{\Tc}}(\y( X_i), \y (\Sigma^jC))$ for all $i \in I$, we also have $\Hom{\Mod{\Tc}}(\coprod_{i\in I}\y( X_i), \y (\Sigma^jC)) =0$.  Applying the functor $\Hom{\Mod{\Tc}}(-, \y (\Sigma^jC))$ the exact sequence \[\coprod_{i\in I} \y (X_i) \rightarrow \varinjlim_{i\in I} \y (X_i) \rightarrow 0\] we conclude that \[ \Hom{\T}( \hocolim{I}{X}, \Sigma^jC) \cong\Hom{\Mod{\Tc}}(\varinjlim_{i\in I}\y (X_i), \y (\Sigma^jC)) = 0\] as desired. 
\end{proof}

\subsection{Smashing t-structures with Grothendieck hearts.}

Collecting together Lemmas \ref{Lemma: homo smash implies Grothendieck}, \ref{Lem: weak AMV}, \ref{Lem: pure inj oth def} with \cite[Thm.~3.6]{AMV}, we have proved the following theorem:

\begin{theorem}\label{Thm: groth hearts}
Let $\T$ be the underlying category of a compactly generated derivator $\Der$ and consider a (left) nondegenerate t-structure ${\bf t} = (\mathcal{U}, \mathcal{V})$ on $\T$.  Then the following statements are equivalent: \begin{enumerate}
\item ${\bf t} $ is a (partial) cosilting t-structure with a pure-injective (partial) cosilting object $C$.
\item $\mathcal{V}$ is definable.
\item ${\bf t} $ is homotopically smashing.
\item ${\bf t} $ is smashing and the heart $\mathcal{G}$ is a Grothendieck category.
\end{enumerate}
\end{theorem}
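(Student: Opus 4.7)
The plan is to prove the cycle of implications $(3) \Rightarrow (4) \Rightarrow (1) \Rightarrow (2) \Rightarrow (3)$ by assembling the three lemmas preceding the theorem together with Theorem \ref{Thm: def cats} and (for $(4)\Rightarrow(1)$) the adaptation of \cite[Thm.~3.6]{AMV} already carried out in Lemma \ref{Lem: weak AMV}. Essentially all of the technical work has been done already; the task is to verify that each hypothesis lines up with the next lemma.

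For $(3) \Rightarrow (4)$, I would first observe that a coproduct indexed by a set $S$ is a directed homotopy colimit over the directed set of finite subsets of $S$ (equivalently, finite coproducts followed by the directed colimit), so any homotopically smashing t-structure is in particular smashing. Lemma \ref{Lemma: homo smash implies Grothendieck} then gives that $\mathcal{G}$ is a Grothendieck category. The step $(4) \Rightarrow (1)$ is precisely the content of Lemma \ref{Lem: weak AMV}, since left nondegeneracy is part of our standing hypothesis.

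For $(1) \Rightarrow (2)$, we invoke Lemma \ref{Lem: pure inj oth def}: we have a pure-injective object $C$ with $\mathcal{V} = {^{\perp_{>0}}}C$ the coaisle of a t-structure, so we only need $\mathcal{V}$ closed under products. But any coaisle of a t-structure is automatically closed under products, since the truncation functor $\tau_{\mathcal{V}}$ is a left adjoint only when regarded going the other way; more directly, $\mathcal{V} = \mathcal{U}^{\perp_0}$ as $\Hom{\T}(-,-)$ sends products in the second variable to products, so closure under products follows from (t1). Hence Lemma \ref{Lem: pure inj oth def} applies and $\mathcal{V}$ is definable. For $(2) \Rightarrow (3)$, Theorem \ref{Thm: def cats} tells us that any definable subcategory is closed under directed homotopy colimits, which is exactly the definition of homotopically smashing.

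There is no real obstacle here beyond bookkeeping: the four implications use, respectively, the observation that coproducts are directed homotopy colimits, Lemma \ref{Lem: weak AMV}, the fact that coaisles are closed under products combined with Lemma \ref{Lem: pure inj oth def}, and Theorem \ref{Thm: def cats}. The slight subtlety worth flagging explicitly in the write-up is the reduction of coproducts to directed homotopy colimits in Step $(3)\Rightarrow(4)$, since the definition of homotopically smashing a priori only concerns diagrams on directed categories rather than discrete ones; this is handled by the standard device of filtering an $S$-indexed coproduct by finite subcoproducts.
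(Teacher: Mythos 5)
Your proposal is correct and follows essentially the same route as the paper, which assembles Lemmas \ref{Lemma: homo smash implies Grothendieck}, \ref{Lem: weak AMV}, \ref{Lem: pure inj oth def} and Theorem \ref{Thm: def cats} (together with \cite[Thm.~3.6]{AMV}) into the cycle of implications you describe. The one genuine omission concerns the parenthetical (nondegenerate, cosilting) reading of the statement: your step $(4)\Rightarrow(1)$ appeals only to Lemma \ref{Lem: weak AMV}, which produces a \emph{partial} cosilting object, whereas in the nondegenerate case the theorem asserts that ${\bf t}$ is genuinely cosilting, i.e.\ that $\mathcal{U} = {^{\perp_{\leq 0}}}C$. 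This is precisely where the paper additionally cites \cite[Thm.~3.6]{AMV}. One can also close the gap directly using right nondegeneracy: given $X \in {^{\perp_{\leq 0}}}C$, apply $\Hom{\T}(-,C[j])$ for $j\leq 0$ to the truncation triangle $\tau_{\mathcal{U}}(X)\to X\to\tau_{\mathcal{V}}(X)\to \tau_{\mathcal{U}}(X)[1]$ and use $\tau_{\mathcal{U}}(X)\in\mathcal{U}$ together with $C[j-1]\in\mathcal{V}$ to deduce $\tau_{\mathcal{V}}(X)\in{^{\perp_{\mathbb{Z}}}}C = \bigcap_{n}\mathcal{V}[n]=0$, hence $X\cong\tau_{\mathcal{U}}(X)\in\mathcal{U}$.

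Two smaller remarks. Your aside that $\tau_{\mathcal{V}}$ ``is a left adjoint only when regarded going the other way'' is a bit garbled: $\tau_{\mathcal{V}}$ is indeed left adjoint to the inclusion $\mathcal{V}\hookrightarrow\T$, so the inclusion is a right adjoint and preserves products, and that adjoint argument would have worked on its own; in any case the $\mathcal{U}^{\perp_0}$ argument you give in its place is correct. Your explicit justification that homotopically smashing implies smashing, by filtering a coproduct through its finite subcoproducts, is sound (one needs $\mathcal{V}$ closed under finite coproducts, which holds since a coaisle contains $0$ and is extension-closed) and is a useful clarification left implicit in the paper.
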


\subsection{Example via glued t-structures.}\label{Ex: recollement}
Let $\mathcal{H}$ be a locally noetherian Grothendieck category such that $\D{\mathcal{H}}$ is compactly generated.  Krause proves in \cite{MR2157133} that there exists a recollement 
\[\xymatrix@C=0.5cm{\mathrm{K_{ac}}(\Inj{\mathcal{H}}) \ar[rrr]^{I} &&& \mathrm{K}(\Inj{\mathcal{H}}) \ar[rrr]^{Q}  \ar @/_1.5pc/[lll]_{I_\lambda}  \ar @/^1.5pc/[lll]_{I_\rho} &&& \D{\mathcal{H}} \ar @/_1.5pc/[lll]_{Q_\lambda} \ar @/^1.5pc/[lll]_{Q_\rho} } \] where $\mathrm{K}(\Inj{\mathcal{H}})$ is the homotopy category of the injective objects in $\mathcal{H}$, $I$ is the inclusion of the full subcategory $\mathrm{K_{ac}}(\Inj{\mathcal{H}})$ of acyclic complexes in $\mathrm{K}(\mathcal{H})$ that are contained in $\mathrm{K}(\Inj{\mathcal{H}})$ and $Q$ is the composition of the inclusion $\mathrm{K}(\Inj{\mathcal{H}}) \rightarrow \mathrm{K}(\mathcal{H})$ with the canonical localisation $\mathrm{K}(\mathcal{H}) \rightarrow \D{\mathcal{H}}$.  

We consider a cosilting object $C$ in $\D{\mathcal{H}}$, e.g.~the injective cogenerator of $\mathcal{H}$, and show that it induces a t-structure on $\mathrm{K}(\Inj{\mathcal{H}})$ that satisfies the conditions of Theorem \ref{Thm: groth hearts}.

\begin{lemma} Let $C$ be a cosilting object in $\D{\mathcal{H}}$.  Then $\mathrm{K_{ac}}(\Inj{\mathcal{H}}) = {^{\perp_\mathbb{Z}}Q_\rho(C)}$.  
\end{lemma}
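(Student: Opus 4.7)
The plan is to leverage the recollement in order to translate the asserted identity into a statement purely about the cosilting t-structure on $\D{\mathcal{H}}$ itself. First I would recall that $I$ is fully faithful with essential image $\ker Q$, so that $X \in \mathrm{K_{ac}}(\Inj{\mathcal{H}})$ if and only if $Q(X) = 0$. Then, since $Q_\rho$ is an exact functor between triangulated categories, it commutes with shifts, and together with the adjunction $(Q, Q_\rho)$ this yields natural isomorphisms
\[
\Hom{\mathrm{K}(\Inj{\mathcal{H}})}(X, Q_\rho(C)[i]) \cong \Hom{\mathrm{K}(\Inj{\mathcal{H}})}(X, Q_\rho(C[i])) \cong \Hom{\D{\mathcal{H}}}(Q(X), C[i])
\]
for every $i \in \mathbb{Z}$. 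Hence $X$ lies in ${^{\perp_\mathbb{Z}}Q_\rho(C)}$ if and only if $Q(X)$ lies in ${^{\perp_\mathbb{Z}}C}$, computed in $\D{\mathcal{H}}$.

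With this translation in hand, the forward inclusion $\mathrm{K_{ac}}(\Inj{\mathcal{H}}) \subseteq {^{\perp_\mathbb{Z}}Q_\rho(C)}$ is immediate: if $Q(X) = 0$ then every such Hom-group vanishes. For the reverse inclusion I must show that ${^{\perp_\mathbb{Z}}C} = 0$ in $\D{\mathcal{H}}$. A short direct calculation with the cosilting t-structure $(\mathcal{U}, \mathcal{V}) = ({^{\perp_{\leq 0}}}C, {^{\perp_{>0}}}C)$ gives $\mathcal{U}[k] = {^{\perp_{\leq k}}}C$ and $\mathcal{V}[k] = {^{\perp_{>k}}}C$, so that both $\bigcap_{k\in\mathbb{Z}}\mathcal{U}[k]$ and $\bigcap_{k\in\mathbb{Z}}\mathcal{V}[k]$ coincide with ${^{\perp_\mathbb{Z}}C}$. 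Thus the vanishing I need is exactly the (left or right) nondegeneracy of the cosilting t-structure associated to $C$.

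The hard part will therefore be verifying this nondegeneracy. In the motivating case where $C$ is the injective cogenerator of $\mathcal{H}$, I would argue directly via the identification $\Hom{\D{\mathcal{H}}}(X, C[i]) \cong \Hom{\mathcal{H}}(H^{-i}(X), C)$ (valid because $C$ is injective in $\mathcal{H}$): if this vanishes for every $i$, each cohomology object of $X$ must vanish because $C$ cogenerates $\mathcal{H}$, forcing $X \cong 0$. For an arbitrary cosilting object $C$ in $\D{\mathcal{H}}$ with $\mathcal{H}$ a locally noetherian Grothendieck category, I would instead appeal to the known nondegeneracy of cosilting t-structures on derived categories of Grothendieck categories, as recorded for instance in \cite{MR3239134} and \cite{AMV}.
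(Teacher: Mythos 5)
Your proposal follows essentially the same route as the paper: translate through the adjunction $(Q, Q_\rho)$ to reduce the claim to ${}^{\perp_\mathbb{Z}}C = 0$ in $\D{\mathcal{H}}$, and combine with $\ker Q = \im I = \mathrm{K_{ac}}(\Inj{\mathcal{H}})$. The paper records this step tersely as ``$C$ cogenerates $\D{\mathcal{H}}$,'' while you correctly unfold it into the statement that the cosilting t-structure is nondegenerate.

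Where the proposal goes astray is in labelling that nondegeneracy ``the hard part'' and, for a general cosilting object, deferring to external references. In fact ${}^{\perp_\mathbb{Z}}C = 0$ is immediate from the definition of cosilting: if $X \in {}^{\perp_\mathbb{Z}}C$ then in particular $\Hom{\D{\mathcal{H}}}(X, C[i]) = 0$ for all $i \leq 0$, so $X \in {}^{\perp_{\leq 0}}C = \mathcal{U}$, and $\Hom{\D{\mathcal{H}}}(X, C[i]) = 0$ for all $i > 0$, so $X \in {}^{\perp_{>0}}C = \mathcal{V}$; axiom \textbf{(t1)} then gives $\Hom{\D{\mathcal{H}}}(X,X) = 0$, hence $X \cong 0$. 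No case distinction between the injective-cogenerator case and the general case is needed, and no appeal to the literature is necessary --- cosilting t-structures are nondegenerate for free. Your direct argument for the injective cogenerator (passing through $H^{-i}$) is fine but is strictly a special case; replacing it and the citation by the one-line observation above gives a self-contained proof matching the paper's.
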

\begin{proof}
As $C$ cogenerates $\D{\mathcal{H}}$, we have that $X \in {^{\perp_\mathbb{Z}}Q_\rho(C)}$ if and only if $\Hom{\D{\mathcal{H}}}(Q(X), \Sigma^iC) = 0$ for all $i \in \mathbb{Z}$ if and only if $Q(X) = 0$ if and only if $X \in \ker{Q} = \im{I} = \mathrm{K_{ac}}(\Inj{\mathcal{H}})$.
\end{proof}

\begin{proposition}
Let $C$ be a cosilting object in $\D{\mathcal{H}}$ and consider the t-structure $(\mathcal{U}, \mathcal{V})$ obtained by gluing the trivial t-structure $(0, \mathrm{K_{ac}}(\Inj{\mathcal{H}}))$ on $\mathrm{K_{ac}}(\Inj{\mathcal{H}})$ with the cosilting t-structure $({^{\perp_{\leq0}}}C, {^{\perp_{>0}}}C)$ on $\D{\mathcal{H}}$.  Then $(\mathcal{U}, \mathcal{V})$ is a partial cosilting t-structure with partial cosilting object $Q_\rho(C)$. 
\end{proposition}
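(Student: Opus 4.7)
The plan is to identify the glued coaisle $\mathcal{V}$ with ${}^{\perp_{>0}}Q_\rho(C)$ by using the adjunction $Q \dashv Q_\rho$, and then to check that $Q_\rho(C)$ itself lies in $\mathcal{V}$. Since the partial cosilting t-structure with object $Q_\rho(C)$ is uniquely determined by ${\bf t}=(\mathcal{U}, {}^{\perp_{>0}}Q_\rho(C))$ once these two facts are in place, this suffices.

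First, by the general Beilinson--Bernstein--Deligne gluing procedure applied to the Krause recollement and to the pair of t-structures given in the statement, the coaisle $\mathcal{V}$ takes the form
\[
\mathcal{V} = \{X \in \mathrm{K}(\mathrm{Inj}(\mathcal{H})) \mid I_\rho(X) \in \mathrm{K_{ac}}(\mathrm{Inj}(\mathcal{H})),\ Q(X) \in {}^{\perp_{>0}}C\}.
\]
The first condition is vacuous because $I_\rho$ always lands in $\mathrm{K_{ac}}(\mathrm{Inj}(\mathcal{H}))$, so $\mathcal{V} = \{X \mid Q(X) \in {}^{\perp_{>0}}C\}$.

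Second, for every $X$ in $\mathrm{K}(\mathrm{Inj}(\mathcal{H}))$ and every $i > 0$, the adjunction $Q \dashv Q_\rho$ yields a natural isomorphism
\[
\Hom{\mathrm{K}(\mathrm{Inj}(\mathcal{H}))}(X, Q_\rho(C)[i]) \cong \Hom{\D{\mathcal{H}}}(Q(X), C[i]).
\]
Consequently $X \in {}^{\perp_{>0}}Q_\rho(C)$ if and only if $Q(X) \in {}^{\perp_{>0}}C$, i.e.\ if and only if $X \in \mathcal{V}$. This identifies $\mathcal{V}$ with ${}^{\perp_{>0}}Q_\rho(C)$ and, combined with the fact that $\mathcal{V}$ is by construction the coaisle of a t-structure, already shows that ${}^{\perp_{>0}}Q_\rho(C)$ is a coaisle.

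Finally, I would verify $Q_\rho(C) \in \mathcal{V}$. Since $Q_\rho$ is a fully faithful right adjoint in the recollement, the counit gives an isomorphism $Q \circ Q_\rho \cong \mathrm{id}_{\D{\mathcal{H}}}$, so $Q(Q_\rho(C)) \cong C$. Because $C$ is cosilting, $C \in {}^{\perp_{>0}}C$, whence $Q_\rho(C) \in \mathcal{V}$. Putting the three steps together shows that $Q_\rho(C)$ satisfies the defining conditions of a partial cosilting object and that ${\bf t}=(\mathcal{U},\mathcal{V})$ is exactly its associated partial cosilting t-structure. I do not anticipate a serious obstacle here: all the work is absorbed into the standard BBD gluing description and the recollement identity $Q \circ Q_\rho \cong \mathrm{id}$; the only care needed is to check that the triviality of the t-structure on $\mathrm{K_{ac}}(\mathrm{Inj}(\mathcal{H}))$ really does eliminate the $I_\rho$-condition from the gluing formula.
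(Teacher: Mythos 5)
Your proof is correct and takes a genuinely more direct route than the paper. The paper's argument unwinds the glued coaisle via its characterization in terms of triangles: $Y \in \mathcal{V}$ if and only if there is a triangle $X \to Y \to Z \to X[1]$ with $X$ acyclic and $Z \cong Q_\rho(M)$ for $M \in {}^{\perp_{>0}}C$; it then checks the two outer terms lie in ${}^{\perp_{>0}}Q_\rho(C)$ (using the preceding lemma $\mathrm{K_{ac}}(\Inj{\mathcal{H}}) = {}^{\perp_\mathbb{Z}}Q_\rho(C)$ for $X$ and full faithfulness of $Q_\rho$ for $Z$), and for the converse produces such a triangle from the (co)units. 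You instead use the functor-level BBD description $\mathcal{V} = \{X : I_\rho(X) \in \mathcal{V}_1,\ Q(X) \in \mathcal{V}_2\}$, observe that the $I_\rho$-condition is vacuous because $\mathcal{V}_1$ is the whole of $\mathrm{K_{ac}}(\Inj{\mathcal{H}})$, and then the adjunction $Q \dashv Q_\rho$ identifies $\mathcal{V}$ with ${}^{\perp_{>0}}Q_\rho(C)$ in a single line, bypassing the triangle manipulation and the preceding lemma entirely. Both arguments are built on the same structural facts about the recollement; yours is arguably more economical, while the paper's triangle-by-triangle analysis makes explicit how a general element of $\mathcal{V}$ decomposes, which fits the presentation it uses for the subsequent Proposition \ref{Prop: NSZ partial cosilting}.
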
 
\begin{proof}
We know that $\mathcal{V}$ is (by definition) the collection of objects $Y$ in $\mathrm{K}(\Inj{\mathcal{H}})$ such that there exists a triangle \[ X \rightarrow Y \rightarrow Z \rightarrow \Sigma X\] with $X$ acylic and $Z \cong Q_\rho(M)$ for $M \in {^{\perp_{>0}}}C$.  So consider $Y \in \mathcal{V}$. Then, by the above, we have that $X \in {^{\perp_\mathbb{Z}}Q_\rho(C)} \subseteq {^{\perp_{>0}}}Q_\rho(C)$.  Moreover, as $Q_\rho$ is fully faithful, it follows that $Z \in {^{\perp_{>0}}}Q_\rho(C)$ and so $Y \in {^{\perp_{>0}}}Q_\rho(C)$.  Conversely, for any $Y \in {^{\perp_{>0}}}Q_\rho(C)$, there is a triangle of the desired form given by the counit of $(I, I_\lambda)$ and the unit of $(Q_\rho, Q)$.  So $\mathcal{V} = {^{\perp_{>0}}}Q_\rho(C)$ is a coaisle and clearly $Q_\rho(C) \in \mathcal{V}$, so $Q_\rho(C)$ is partial cosilting.
\end{proof}

In \cite[Rem.~1]{NSZ} the authors define a ``partial cosilting object'' to be an object $X$ in $\T$ such that there exists a t-structure $({}^{\perp_{\leq 0}}X,\mathcal{V})$ and $X\in \mathcal{V}^\perp[-1]$.  If the t-structure $({}^{\perp_{\leq 0}}X,\mathcal{V})$ is left nondegenerate, then $\mathcal{V} = {}^{\perp_{>0}}X$ and the object $X$ is cosilting (see the dual of \cite[Rmk.~3]{NSZ}).  In the next proposition, we show that, in the setting of this example, the object $Q_\rho(C)$ is partial cosilting in the sense of this article and in the sense of \cite{NSZ}.  The relationship between the two notions of partial cosilting are not clear in general.

\begin{proposition}\label{Prop: NSZ partial cosilting}
Let $C$ be a cosilting object in $\D{\mathcal{H}}$ and consider the t-structure $(\mathcal{U}, \mathcal{V})$ obtained by gluing the trivial t-structure $(\mathrm{K_{ac}}(\Inj{\mathcal{H}}), 0)$ on $\mathrm{K_{ac}}(\Inj{\mathcal{H}})$ with the cosilting t-structure $({^{\perp_{\leq0}}}C, {^{\perp_{>0}}}C)$ on $\D{\mathcal{H}}$.  Then the aisle $\mathcal{U}$ coincides with ${^{\perp_{\leq0}}}Q_\rho(C)$ and $Q_\rho(C)$ is contained in $\mathcal{V}^\perp[-1]$.
\end{proposition}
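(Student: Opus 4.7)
The plan is to unfold the Beilinson--Bernstein--Deligne description of the glued aisle and then invoke the adjunction $Q \dashv Q_\rho$, mirroring the computation carried out for $\mathcal{V}$ in the preceding proposition with the roles of the left and right adjoints swapped.

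First I would recall that, by the BBD gluing recipe, the glued aisle $\mathcal{U}$ consists of those $X \in \mathrm{K}(\Inj{\mathcal{H}})$ such that $I_\lambda(X)$ lies in the aisle $\mathrm{K_{ac}}(\Inj{\mathcal{H}})$ of the chosen t-structure on the kernel piece, and $Q(X) \in {^{\perp_{\leq 0}}}C$. Since $I_\lambda$ takes values in $\mathrm{K_{ac}}(\Inj{\mathcal{H}})$ by construction of the recollement, the first condition is vacuous, and hence $\mathcal{U} = \{X \in \mathrm{K}(\Inj{\mathcal{H}}) : Q(X) \in {^{\perp_{\leq 0}}}C\}$.

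Next I would apply the adjunction $Q \dashv Q_\rho$ to obtain the natural isomorphism $\Hom{\mathrm{K}(\Inj{\mathcal{H}})}(X, Q_\rho(C)[i]) \cong \Hom{\D{\mathcal{H}}}(Q(X), C[i])$ for every $X \in \mathrm{K}(\Inj{\mathcal{H}})$ and every $i \in \mathbb{Z}$. It follows directly from this that $Q(X) \in {^{\perp_{\leq 0}}}C$ if and only if $X \in {^{\perp_{\leq 0}}}Q_\rho(C)$. Combining this equivalence with the description of $\mathcal{U}$ obtained in the previous step yields $\mathcal{U} = {^{\perp_{\leq 0}}}Q_\rho(C)$, which is the claim.

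There is no substantive obstacle here: the argument is essentially dual to the proof of the preceding proposition, where the coaisle was computed using that $I_\rho$ lands in $\mathrm{K_{ac}}(\Inj{\mathcal{H}})$ together with the adjunction $Q \dashv Q_\rho$. If one prefers to mirror the triangle-based style of that proof more closely, one can instead begin with the recollement triangle $Q_\lambda Q(X) \rightarrow X \rightarrow I I_\lambda(X) \rightarrow Q_\lambda Q(X)[1]$ associated to an object of the glued aisle and verify that each outer term lies in ${^{\perp_{\leq 0}}}Q_\rho(C)$ using that $Q_\lambda$ is fully faithful and that $Q \circ I = 0$.
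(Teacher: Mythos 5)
Your proof is correct, and it takes a genuinely more direct route than the paper's. You unfold the Beilinson--Bernstein--Deligne formula for the glued \emph{aisle}, observe that the condition on $I_\lambda$ is vacuous because the aisle of the kernel t-structure is all of $\mathrm{K_{ac}}(\Inj{\mathcal{H}})$, and then translate the remaining condition $Q(X) \in {^{\perp_{\leq 0}}}C$ across the adjunction $Q \dashv Q_\rho$. The paper instead works entirely with the \emph{coaisle}: it takes as its starting point the description $\mathcal{V} = Q_\rho({^{\perp_{>0}}}C)$, proves that $Q_\rho$ restricts to an equivalence ${^{\perp_{>0}}}C \xrightarrow{\sim} ({^{\perp_{\leq 0}}}Q_\rho(C))^{\perp_0}$ (which requires a denseness argument using $\mathrm{K_{ac}}(\Inj{\mathcal{H}}) = {^{\perp_\mathbb{Z}}}Q_\rho(C)$ and $\mathrm{K_{ac}}(\Inj{\mathcal{H}})^{\perp_0} = \im Q_\rho$), and then infers $\mathcal{U} = {^{\perp_{\leq 0}}}Q_\rho(C)$ from the fact that the aisle of a t-structure is the left orthogonal of its coaisle. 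What your approach buys is simplicity: a single application of the adjunction formula, with no need to analyze the essential image of $Q_\rho$ or to pass through the coaisle at all. What the paper's approach buys is the intermediate identification of $\mathcal{V}$ with $({^{\perp_{\leq 0}}}Q_\rho(C))^{\perp_0}$, which makes manifest the duality with the conditions of \cite[Def.~2]{NSZ} that the section is explicitly trying to exhibit. One small remark on your closing sketch: the triangle argument as stated only yields the inclusion $\mathcal{U} \subseteq {^{\perp_{\leq 0}}}Q_\rho(C)$ (perpendicular classes are closed under extensions); the reverse inclusion would still require the adjunction computation, so the first version of your argument is the one to keep.
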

\begin{proof}
The coaisle $\mathcal{V}$ is given by $Q_\rho({^{\perp_{>0}}}C)$.  We will show that $Q_\rho$ restricts to an equivalence of full subcategories $Q_\rho \colon {^{\perp_{>0}}}C \longrightarrow ({^{\perp_{\leq0}}}Q_\rho(C))^{\perp_0}$.  That is, we identify $({^{\perp_{\leq0}}}Q_\rho(C))^{\perp_0}$ with $\mathcal{V}$ and it then follows that $\mathcal{U} = {^{\perp_{\leq0}}}Q_\rho(C)$.  

Let $M \in {^{\perp_{>0}}}C$ and let $X \in {^{\perp_{\leq0}}}Q_\rho(C)$.  By using the adjuction $(Q, Q_\rho)$, we have that $Q(X) \in {^{\perp_{\leq0}}}C$ and so, using the adjunction again, we have that $Q_\rho(M) \in ({^{\perp_{\leq0}}}Q_\rho(C))^{\perp_0}$.  So $Q_\rho$ restricts to a well-defined functor and this restriction is clearly fully faithful.  It remains to show that the restricted $Q_\rho$ is dense.  As $\mathrm{K_{ac}}(\Inj{\mathcal{H}}) = {^{\perp_\mathbb{Z}}Q_\rho(C)} \subseteq {^{\perp_{\leq0}}}Q_\rho(C)$, it follows that $({^{\perp_{\leq0}}}Q_\rho(C))^{\perp_0} \subseteq \mathrm{K_{ac}}(\Inj{\mathcal{H}})^{\perp_0} = \im{Q_\rho}$.  Therefore, for $Y \in ({^{\perp_{\leq0}}}Q_\rho(C))^{\perp_0}$, we have that $Q_\rho Q(Y) \cong Y$ and so it suffices to prove that $Q(Y) \in {^{\perp_{>0}}}C$.  Let $X \in {^{\perp_{\leq0}}}C$.  Then $Q_\rho(X) \in Q_\rho({^{\perp_{\leq0}}}C) \subseteq {^{\perp_{\leq0}}}Q_\rho(C)$ and so $\Hom{\mathrm{K}(\Inj{\mathcal{H}})}(Q_\rho(X), Y) = 0$.  But then also $\Hom{\mathrm{K}(\Inj{\mathcal{H}})}(X, Q(Y)) = 0$ and $Q(Y) \in {^{\perp_{>0}}}C$ as required.

Note that the final condition holds for $Q_\rho(C)$ since the same condition holds for $C$ in $\D{\mathcal{H}}$.
\end{proof}

\section{Purity and finiteness conditions on the heart.}
Let $\mathcal{G}$ denote the heart of a t-structure ${\bf t}$.  In many interesting examples, the heart $\mathcal{G}$ has a set of finitely presented generators i.e.~$\mathcal{G}$ is \textbf{locally finitely presented}.  In this case, $\mathcal{G}$ has an internal definition of purity (see \cite{exactly} for more details).  Moreover, the purity in $\mathcal{G}$ is intimately linked with finiteness conditions on $\mathcal{G}$.  In this section we will investigate the connection between purity in $\Der(\base)$ and purity in the heart $\mathcal{G}$ of a t-structure satisfying the equivalent conditions of Theorem \ref{Thm: groth hearts}.

\subsection{Purity in the heart.}

Throughout this section let $\Der$ be a compactly generated derivator and let ${\bf t} = (\mathcal{U}, \mathcal{V})$ be a t-structure satisfying the equivalent conditions of Theorem \ref{Thm: groth hearts} with heart $\mathcal{G}$.  

\begin{lemma}\label{Lem: y and limits in the heart}
For a directed system $\{X_i\}_{i\in I}$ in the heart $\mathcal{G}$, we have \[ \y \left(\varinjlim_{i\in I} X_i\right) \cong \varinjlim_{i\in I}\y X_i.  \]
\end{lemma}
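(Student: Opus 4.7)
My plan is to show that for any coherent lift $X \in \Der(I)$ of the directed system $\{X_i\}_{i\in I}$, the homotopy colimit $H := \hocolim{I}{X}$ coincides with $L := \varinjlim^{\mathcal{G}}_{i\in I} X_i$ as objects of $\mathcal{G} \subseteq \T$, so that Remark~\ref{Rem: hocolim to dir lim} yields the desired identification $\y L \cong \varinjlim_{i\in I} \y X_i$.

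Concretely, I would first lift $\{X_i\}$ to a coherent diagram $X \in \Der(I)$ (appealing to strongness, possibly combined with a reduction to finite free subcategories when $I$ is not itself finite free) and set $H := \hocolim{I}{X}$. The key intermediate claim is that $H \in \mathcal{G}$. For membership in $\mathcal{V}$, I would invoke the fact, supplied by Theorem~\ref{Thm: groth hearts}, that ${\bf t}$ is homotopically smashing, together with $X_i \in \mathcal{G} \subseteq \mathcal{V}$ for all $i$. For membership in $\mathcal{U}[-1]$, I would argue dually: the shifted diagram $X[1] \in \Der(I)$ has values $X_i[1] \in \mathcal{G}[1] \subseteq \mathcal{U}$, and $\mathcal{U}$ is closed under directed homotopy colimits because it is closed under coproducts and cones (standard aisle properties), which together suffice to build directed hocolims via telescope-type triangles. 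Hence $H[1] \in \mathcal{U}$, giving $H \in \mathcal{G}$.

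Once $H \in \mathcal{G}$ is established, I would identify $H$ with $L$ via Yoneda in $\mathcal{G}$. By the adjunction $(\hocolim_I, \pi^*_I)$ and fullness of $\mathcal{G}$ in $\T$, for every $Y \in \mathcal{G}$,
\[
\Hom{\mathcal{G}}(H, Y) = \Hom{\T}(H, Y) \cong \varprojlim_{i\in I} \Hom{\T}(X_i, Y) = \varprojlim_{i\in I} \Hom{\mathcal{G}}(X_i, Y) \cong \Hom{\mathcal{G}}(L, Y),
\]
where the final isomorphism is the universal property of $L$ in $\mathcal{G}$. Yoneda then yields $H \cong L$, and Remark~\ref{Rem: hocolim to dir lim} supplies $\y H \cong \varinjlim_{i\in I} \y X_i$, so combining these gives $\y L \cong \varinjlim_{i\in I} \y X_i$.

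The main obstacle is the closure of the aisle $\mathcal{U}$ under directed homotopy colimits, which is not an explicit hypothesis of Theorem~\ref{Thm: groth hearts} and must instead be extracted from the aisle structure and an explicit telescope construction of $\hocolim_I$. A secondary technicality is the lift of an incoherent directed system to $\Der(I)$ when $I$ is not finite free; this can be addressed by approximation arguments reducing to the strongness axiom, or, alternatively, one could bypass explicit lifts by computing both $\Hom{\T}(C, H)$ (via compactness of $C$) and $\Hom{\T}(C, L)$ (via $L \in \mathcal{G}$) and matching them directly.
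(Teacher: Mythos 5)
Your overall strategy --- lift the system to $X \in \Der(I)$, show $H := \hocolim{I}{X}$ lies in $\mathcal{G}$, identify $H$ with the heart colimit $L$, and then invoke Remark~\ref{Rem: hocolim to dir lim} --- is essentially the argument that the paper delegates to its citation of \cite[Thm.~A]{SSV}, so the route is the right one. There is, however, a genuine gap in the identification step. The isomorphism
\[
\Hom{\T}(H, Y) \cong \varprojlim_{i\in I} \Hom{\T}(X_i, Y)
\]
does not follow from the adjunction $(\hocolimf{I}, \pi^*_I)$ together with fullness of $\mathcal{G}$. The adjunction yields $\Hom{\T}(H, Y) \cong \Hom{\Der(I)}(X, \pi^*_I Y)$, but $\Hom{\Der(I)}(X, \pi^*_I Y)$ is not the inverse limit of the groups $\Hom{\T}(X_i, Y)$: the underlying diagram functor $\dia{I}$ is not faithful, so a morphism of coherent diagrams carries data beyond its family of components, and the canonical comparison to $\varprojlim_i \Hom{\T}(X_i, Y)$ has correction terms controlled by $\lim^p_{I^{\op}}\Hom{\T}(X_i, Y[-p])$ for $p\geq 1$. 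In the present situation these do vanish --- $X_i \in \mathcal{U}[-1]$ gives $X_i[p] \in \mathcal{U}$ for $p\geq 1$ while $Y \in \mathcal{V}$, so $\Hom{\T}(X_i, Y[-p]) = 0$ --- but you must invoke this vanishing via a Milnor/Bousfield--Kan-type comparison to make the step go through; the adjunction alone is not enough. The two further obstacles you flagged (producing a coherent lift of the incoherent directed system, and closure of $\mathcal{U}$ under directed homotopy colimits, where your telescope argument covers only sequential colimits) are also real, and together with the comparison just described they constitute precisely the technical content that the paper's one-line proof is quietly carrying via its appeal to \cite{SSV}.
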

\begin{proof}
This follows directly from \cite[Thm.~A]{SSV} and Remark \ref{Rem: hocolim to dir lim}.
\end{proof}

\begin{proposition}\label{Prop: pure exact in heart is pure}
Suppose that the heart $\mathcal{G}$ is locally finitely presented.  If a short exact sequence $0 \rightarrow X \overset{f}{\rightarrow} Y \overset{g}{\rightarrow} Z \rightarrow 0$ in $\mathcal{G}$ is pure-exact, then the corresponding triangle $X \overset{f}{\rightarrow} Y \overset{g}{\rightarrow} Z \rightarrow \Sigma X$ is a pure triangle in $\Der(\base)$. 
\end{proposition}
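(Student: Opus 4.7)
My plan is to combine the Crawley-Boevey characterisation of purity in locally finitely presented Grothendieck categories with Lemma \ref{Lem: y and limits in the heart}, transporting the problem into the abelian category $\Mod{\Tc}$, where purity of the triangle is detected directly by the restricted Yoneda functor $\y$.

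First I would invoke the fact that, in a locally finitely presented Grothendieck category, every pure-exact short exact sequence is a componentwise directed colimit of split short exact sequences. This provides a small directed category $I$ and a directed system $\{0 \to X_i \to Y_i \to Z_i \to 0\}_{i \in I}$ of split short exact sequences in $\mathcal{G}$ whose colimit is the given sequence $0 \to X \overset{f}{\to} Y \overset{g}{\to} Z \to 0$. Each split short exact sequence in the heart extends to a distinguished triangle $X_i \to Y_i \to Z_i \to X_i[1]$ in $\T$; since the connecting morphism of this triangle corresponds, under the canonical injection $\Ext{\mathcal{G}}(Z_i,X_i) \hookrightarrow \Hom{\T}(Z_i,X_i[1])$, to the Yoneda extension class of the sequence, it vanishes, and the triangle splits in $\T$. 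Applying the additive functor $\y$ produces a directed system of split short exact sequences $0 \to \y X_i \to \y Y_i \to \y Z_i \to 0$ in $\Mod{\Tc}$.

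To conclude, Lemma \ref{Lem: y and limits in the heart} identifies $\y X, \y Y, \y Z$ with the colimits $\varinjlim \y X_i, \varinjlim \y Y_i, \varinjlim \y Z_i$, and naturality of $\y$ together with the universal property of the colimit identifies the colimits of the transition maps with $\y f$ and $\y g$. Since direct limits are exact in the Grothendieck category $\Mod{\Tc}$, the resulting sequence $0 \to \y X \overset{\y f}{\to} \y Y \overset{\y g}{\to} \y Z \to 0$ is short exact, so the triangle is a pure triangle. The only mildly subtle step is the identification of the connecting morphism of the triangle associated with a split short exact sequence in $\mathcal{G}$ as the zero morphism in $\T$; once this connecting morphism is recognised as the Yoneda extension class, the remainder of the argument reduces to a formal manipulation of directed colimits.
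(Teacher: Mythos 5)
Your proposal is correct and follows essentially the same route as the paper: write the pure-exact sequence as a directed colimit of split short exact sequences in $\mathcal{G}$, observe that each split sequence gives a split (hence pure) triangle in $\T$, apply $\y$, and use Lemma \ref{Lem: y and limits in the heart} together with exactness of direct limits in $\Mod{\Tc}$. The only difference is that you spell out, via the vanishing of the Yoneda extension class under $\Ext{\mathcal{G}}(Z_i,X_i) \hookrightarrow \Hom{\T}(Z_i,X_i[1])$, why a split short exact sequence in the heart yields a split triangle in $\T$ — a step the paper takes for granted.
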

\begin{proof}
The sequence $0 \rightarrow X \overset{f}{\rightarrow} Y \overset{g}{\rightarrow} Z \rightarrow 0$ is the directed colimit of a directed system $\{ 0 \rightarrow X_i \overset{f_i}{\rightarrow} Y_i \overset{g_i}{\rightarrow} Z_i \rightarrow 0\}_{i\in I}$ of split exact sequences in $\mathcal{G}$.  There is then a directed system of split (and hence pure) triangles $\{X_i \overset{f_i}{\rightarrow} Y_i \overset{g_i}{\rightarrow} Z_i \rightarrow \Sigma X_i\}_{i\in I}$.  Directed colimits in $\Mod{\Tc}$ are exact so \[0 \rightarrow \varinjlim_{i\in I}\y X_i \rightarrow \varinjlim_{i\in I}\y Y_i \rightarrow \varinjlim_{i\in I}\y Z_i \rightarrow 0\] is exact.  But this is $0 \rightarrow \y X \overset{\y f}{\rightarrow} \y Y \overset{\y g}{\rightarrow} \y Z \rightarrow 0$ by Lemma \ref{Lem: y and limits in the heart} and so $X \overset{f}{\rightarrow} Y \overset{g}{\rightarrow} Z \rightarrow \Sigma X$ is pure.
\end{proof}

\begin{remark} The product (in $\mathcal{G}$) of a set $\{X_s\}_{s\in S}$ of objects in $\mathcal{G}$ is given by $H^0_{{\bf t}}(\prod_{s\in S} X_s)$ where the symbol $\prod$ denotes the product in $\Der(\base)$.  It follows that, in general, the definable subcategories of $\mathcal{G}$ may not be definable subcategories of $\Der(\base)$. \end{remark}

\begin{corollary}\label{Cor: pure exact in heart is pure}
Let $\mathcal{D} \subseteq \mathcal{G}$ be a definable subcategory of $\T \simeq \Der(\base)$.  If $\mathcal{G}$ is locally finitely presented, then $\mathcal{D}$ is a definable subcategory of $\mathcal{G}$.
\end{corollary}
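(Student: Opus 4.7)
The plan is to verify the intrinsic characterisation of definable subcategories inside a locally finitely presented Grothendieck category $\mathcal{G}$ (cf.~\cite{exactly}): closure under products, pure subobjects, and directed colimits, all computed in $\mathcal{G}$. Since $\mathcal{D}$ is already closed under the corresponding operations computed in $\T$, the work consists of bridging the discrepancy between the two kinds of computation in each case.

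For products, given any family $\{X_s\}_{s\in S} \subseteq \mathcal{D}$, the $\T$-product $\prod^{\T}_{s} X_s$ lies in $\mathcal{D}$ by definability of $\mathcal{D}$ in $\T$, and hence lies in $\mathcal{G}$. Since any object of $\mathcal{G}$ is fixed by $H^0_{{\bf t}}$, this forces the product computed in $\mathcal{G}$ to agree with the product computed in $\T$, placing it in $\mathcal{D}$. For pure subobjects, if $0 \to X \to Y \to Z \to 0$ is a pure-exact sequence in $\mathcal{G}$ with $Y \in \mathcal{D}$, then Proposition \ref{Prop: pure exact in heart is pure} produces a pure triangle $X \to Y \to Z \to X[1]$ in $\T$, and Theorem \ref{Thm: def cats} then places $X$ in $\mathcal{D}$.

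For a directed system $\{X_i\}_{i\in I}$ in $\mathcal{D}$, I would argue through the restricted Yoneda functor: by Lemma \ref{Lem: defT=defM}, membership in $\mathcal{D}$ is detected by $\y(-)$ landing in $\Def{\Mod{\Tc}}{\y\mathcal{D}}$. Each $\y X_i$ lies in this definable subcategory of $\Mod{\Tc}$, which is closed under direct limits, so $\varinjlim_{i} \y X_i$ lies in $\Def{\Mod{\Tc}}{\y\mathcal{D}}$. Lemma \ref{Lem: y and limits in the heart} identifies this colimit with $\y(\varinjlim^{\mathcal{G}}_{i} X_i)$, and a second application of Lemma \ref{Lem: defT=defM} places $\varinjlim^{\mathcal{G}}_{i} X_i$ in $\mathcal{D}$.

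The only conceptual obstacle is the one flagged by the remark immediately preceding the corollary: products and direct limits formed inside $\mathcal{G}$ need not agree with those formed inside $\T$. Fortunately this does not obstruct the argument, because in the product case the two constructions coincide whenever the $\T$-product already lies in $\mathcal{G}$, while in the direct limit case the two constructions may genuinely differ but are identified after applying $\y$, which is the only invariant used to detect membership in $\mathcal{D}$.
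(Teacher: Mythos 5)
Your proposal is correct and follows essentially the same route as the paper's proof: verify the closure conditions (products, pure subobjects, directed colimits in $\mathcal{G}$) by reducing each to the corresponding condition in $\T$, handling products via $H^0_{\bf t}$ fixing objects of $\mathcal{G}$, pure subobjects via Proposition \ref{Prop: pure exact in heart is pure}, and directed colimits via $\y$, Lemma \ref{Lem: y and limits in the heart}, and Lemma \ref{Lem: defT=defM}.
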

\begin{proof}
We use the closure conditions $(2)$ of Theorem \ref{Thm: def cats} to show that $\mathcal{D}$ is closed under pure subobjects, products and directed colimits in $\mathcal{G}$.  It follows immediately from Proposition \ref{Prop: pure exact in heart is pure} that $\mathcal{D}$ is closed under pure subobjects in $\mathcal{G}$.  For closure under products in $\mathcal{G}$, let $\{X_s\}_{s\in S}$ be a set of objects in $\mathcal{D}$.  Then $\prod_{s\in S} X_s \in \mathcal{D} \subseteq \mathcal{G}$ and so the product taken in $\mathcal{G}$ is $H^0_{{\bf t}}(\prod_{s\in S} X_s) \cong \prod_{s\in S} X_s$ and hence is contained in $\mathcal{D}$ as required.  Finally let $\{X_i\}_{i\in I}$ be a directed system in $\mathcal{D}$.  Then $\y \varinjlim_{i\in I} X_i \cong \varinjlim_{i\in I}\y X_i$ is contained in $\Def{\Mod{\Tc}}{\y\mathcal{D}}$.  Thus we have that $\varinjlim_{i\in I}X_i \in \mathcal{D}$ by Lemma \ref{Lem: defT=defM}.
\end{proof}

\subsection{Locally noetherian hearts.}\label{Sec: loc noeth}
In this section, we work in a compactly generated triangulated category $\T$ that we do not require to be the underlying category of a derivator.  An object in a Grothendieck category $\mathcal{H}$ is called \textbf{noetherian} if the set of its subobjects satisfies the ascending chain condition.  The category $\mathcal{H}$ is called \textbf{locally noetherian} if there is a set of noetherian generators. 

We will need the properties of a partial cosilting object $C$ stated in the next lemma.  A proof for each statement essentially already exists in the literature in some form but we adapt the statements to fit in the current setting.  

\begin{lemma}\label{lem: injectives}
Let $C$ be a partial cosilting object in $\T$ and let ${\bf t} = (\mathcal{U}, \mathcal{V})$ denote the associated t-structure.  Then the following statements hold. \begin{enumerate}
\item $\Prod{C} = \mathcal{V} \cap (\Sigma^{-1}\mathcal{V})^{\perp_0}$.
\item The cohomological functor $H_{{\bf t}}^0$ restricts to an equivalence \[H_{{\bf t}}^0 \colon \Prod{C} \overset{\sim}{\rightarrow} \Prod{H_{{\bf t}}^0(C)}.\]
\item $H_{{\bf t}}^0(C)$ is an injective cogenerator for $\heart{C}$.
\end{enumerate}
\end{lemma}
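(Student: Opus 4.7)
The plan is to establish the three statements in the order: forward inclusion of (1), then (3), then (2), and finally the reverse inclusion of (1), so that each piece feeds into the next. The forward inclusion $\Prod{C} \subseteq \mathcal{V} \cap (\mathcal{V}[-1])^{\perp_0}$ is immediate from closure considerations: both classes are closed under products and direct summands, so it suffices to check $C$ lies in each. The first containment is the partial cosilting hypothesis, and for the second, if $V \in \mathcal{V} = {^{\perp_{>0}}}C$ then $\Hom{\T}(V[-1], C) = \Hom{\T}(V, C[1]) = 0$.

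For (3), the main tool is the truncation triangle $H^0_{{\bf t}}(C) \to C \to \tau_{\mathcal{V}[-1]}(C) \to H^0_{{\bf t}}(C)[1]$ coming from $C \in \mathcal{V}$ and the shifted t-structure $(\mathcal{U}[-1], \mathcal{V}[-1])$, together with the identification $H^0_{{\bf t}}|_{\mathcal{V}} = \tau_{\mathcal{U}[-1]}|_{\mathcal{V}}$. For injectivity, applying $\Hom{\T}(Y, -)$ for $Y \in \heart{C}$ sandwiches $\Ext{\heart{C}}(Y, H^0_{{\bf t}}(C)) \cong \Hom{\T}(Y, H^0_{{\bf t}}(C)[1])$ between $\Hom{\T}(Y, \tau_{\mathcal{V}[-1]}(C)) = 0$ (orthogonality of the shifted t-structure) and $\Hom{\T}(Y, C[1]) = 0$ (from $Y \in \mathcal{V}$). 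For cogeneration, use the adjunction $\Hom{\heart{C}}(X, H^0_{{\bf t}}(C)) \cong \Hom{\T}(X, C)$, and suppose this vanishes for some nonzero $X \in \heart{C}$. One checks that $\Hom{\T}(X, C[i]) = 0$ for every $i \in \mathbb{Z}$: the case $i > 0$ is $X \in \mathcal{V}$, while for $i < 0$ use $X[1] \in \mathcal{U}$ together with $C[i+1] \in \mathcal{V}[i+1] \subseteq \mathcal{V}$. Hence $X \in \bigcap_{j} \mathcal{V}[j] \subseteq \mathcal{V}[-1]$, and combined with $X \in \mathcal{U}[-1]$ the orthogonality $\Hom{\T}(\mathcal{U}[-1], \mathcal{V}[-1]) = 0$ collapses $\mathrm{id}_X$ to zero, forcing $X = 0$, a contradiction.

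For (2), full faithfulness follows by applying $\Hom{\T}(-, Y)$ with $Y \in \Prod{C}$ to the truncation triangle for $X \in \Prod{C}$: since $Y \in (\mathcal{V}[-1])^{\perp_0}$ by the forward inclusion of (1), the Hom groups against both $\tau_{\mathcal{V}[-1]}(X)$ and $\tau_{\mathcal{V}[-1]}(X)[-1] \in \mathcal{V}[-2] \subseteq \mathcal{V}[-1]$ vanish, yielding $\Hom{\T}(X, Y) \cong \Hom{\T}(H^0_{{\bf t}}(X), Y) \cong \Hom{\heart{C}}(H^0_{{\bf t}}(X), H^0_{{\bf t}}(Y))$, where the second step uses $\tau_{\mathcal{U}[-1]}$ as right adjoint to the inclusion. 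For essential surjectivity, $H^0_{{\bf t}}$ commutes with products on $\mathcal{V}$ as a right adjoint, so $H^0_{{\bf t}}(C^I) \cong H^0_{{\bf t}}(C)^I$; any summand of $H^0_{{\bf t}}(C)^I$ corresponds to an idempotent in $\mathrm{End}_{\heart{C}}(H^0_{{\bf t}}(C)^I)$, which lifts through the above isomorphism to an idempotent on $C^I$ and splits in the compactly generated $\T$.

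Finally, for the reverse inclusion of (1), take $X \in \mathcal{V} \cap (\mathcal{V}[-1])^{\perp_0}$; the injectivity argument of (3) extends verbatim (this time using $Y[-1] \in \mathcal{V}[-1]$ and $X \in (\mathcal{V}[-1])^{\perp_0}$ to force $\Hom{\T}(Y, X[1]) = 0$) to show $H^0_{{\bf t}}(X)$ is injective in $\heart{C}$. By cogeneration it embeds as a direct summand of some $H^0_{{\bf t}}(C)^I$, so lies in $\Prod{H^0_{{\bf t}}(C)}$, and by essential surjectivity of (2) there is $X' \in \Prod{C}$ with $H^0_{{\bf t}}(X') \cong H^0_{{\bf t}}(X)$; full faithfulness lifts this isomorphism to $X \cong X'$. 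The main obstacle I expect is the cogenerator part of (3): since partial cosilting is strictly weaker than cosilting, $C$ does not a priori detect all of $\T$, and the crucial trick is that orthogonality to every shift of $C$ pushes a heart object into $\mathcal{U}[-1] \cap \mathcal{V}[-1]$, which is then zero by orthogonality of the shifted t-structure.
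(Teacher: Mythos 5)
Your argument is correct and, unlike the paper's proof, is self-contained: the paper dispatches part (1) by citing \cite[Lem.~4.5]{PV}, part (2) by citing \cite[Lem.~2.8]{AMV}, and the injectivity in part (3) by citing the dual of \cite[Lem.~2(1)]{NSZ}, only proving the cogenerator claim directly. Your proof of that cogenerator claim is essentially the one in the paper --- both show $\Hom{\T}(M,C)\cong\Hom{\heart{C}}(M,H^0_{{\bf t}}(C))$ via the truncation triangle of $C$ with respect to $(\mathcal{U}[-1],\mathcal{V}[-1])$, and both force $M=0$ from $\Hom{\T}(M,C)=0$ by landing in $\mathcal{U}[-1]\cap\mathcal{V}[-1]=0$ --- though the paper stops at $M\in{^{\perp_{\geq 0}}}C=\mathcal{V}[-1]$ while you do the extra work of showing $M\in\bigcap_j\mathcal{V}[j]$; the negative shifts are not needed. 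For the parts the paper leaves to citations, your arguments (injectivity via the sandwich in the long exact sequence, full faithfulness of $H^0_{{\bf t}}$ by killing the two $\mathcal{V}[-1]$-terms, essential surjectivity via preservation of products by the right adjoint $\tau_{\mathcal{U}[-1]}$ plus idempotent splitting, and the reverse inclusion of (1) by transporting the injective direct summand back) are sound and recover what the cited lemmas would give. This buys a fully explicit verification that the cosilting results of \cite{PV,AMV,NSZ} really do adapt to the partial cosilting setting.

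One small expositional issue: in the reverse inclusion of (1) you apply ``full faithfulness'' to the object $X$, which at that point is only known to lie in $\mathcal{V}\cap(\mathcal{V}[-1])^{\perp_0}$, not in $\Prod{C}$. Your full-faithfulness argument is stated for $X,Y\in\Prod{C}$, so as written there is a circularity. In fact the argument only uses that $X\in\mathcal{V}$ (so the shifted truncation triangle computes $H^0_{{\bf t}}(X)$) and that $Y\in\mathcal{V}\cap(\mathcal{V}[-1])^{\perp_0}$ (so the two outer Hom-groups vanish), so the isomorphism $\Hom{\T}(X,Y)\cong\Hom{\heart{C}}(H^0_{{\bf t}}(X),H^0_{{\bf t}}(Y))$ holds for all $X,Y\in\mathcal{V}\cap(\mathcal{V}[-1])^{\perp_0}$; it would be cleaner to state it at that level of generality and only afterwards identify this class with $\Prod{C}$.
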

\begin{proof} By definition $\mathcal{V} = {^{\perp_{>0}}}C$.
(1) This follows by the same argument as in \cite[Lem.~4.5]{PV}; (2) This follows by the same argument as in \cite[Lem.~2.8]{AMV}; (3) The fact that the image $H^0_{{\bf t}}(\Prod{C})$ consists of injective objects is the dual of \cite[Lem.~2(1)]{NSZ}.  To show that $H^0_{{\bf t}}(C)$ is a cogenerator, let $M$ be an object contained in $\heart{C}$ such that $\Hom{\T}(M, C)=0$.  Then, in fact, $M \in {^{\perp_{\geq0}}}C = \Sigma^{-1}\mathcal{V}$.  So $M \in \Sigma^{-1}\mathcal{U}\cap \Sigma^{-1}\mathcal{V} = 0$.  So for all non-zero $M$ in $\heart{C}$, we have $\Hom{\T}(M,C) \neq0$.  As $C \in \mathcal{V}$ we may consider the following triangle obtained by shifting and rotating the truncation triangle for $\Sigma C$: $\Sigma^{-2}(\tau_\mathcal{V}(\Sigma C)) \rightarrow H^0_{{\bf t}}(C) \rightarrow C \rightarrow \Sigma^{-1}(\tau_\mathcal{V}(\Sigma C)).$  Applying $\Hom{\T}(M,-)$ and using that $M \in \Sigma^{-1}\mathcal{U} = {^{\perp_0}(\Sigma^{-1}\mathcal{V})}$ we obtain that $\Hom{\T}(M,C) \cong \Hom{\heart{C}}(M, H^0_{{\bf t}}(C))$ and so $H^0_{{\bf t}}(C)$ is indeed an injective cogenerator for $\heart{C}$.
\end{proof}

We can characterise locally noetherian Grothendieck categories in terms of properties of the injective cogenerator and we use this in the next proposition to identify the cosilting objects whose associated heart is locally noetherian.  

An injective object $E$ is \textbf{$\Sigma$-injective} if $E^{(I)}$ (i.e.~the $I$-indexed direct sum of copies of $E$) is injective for every set $I$.  A Grothendieck category $\mathcal{H}$ is locally noetherian if and only if $\mathcal{H}$ has a $\Sigma$-injective cogenerator (see, for example, \cite[Cor.~3]{MR0367017}).  Similarly, we may define a pure-injective object $N$ to be \textbf{$\Sigma$-pure-injective} if $N^{(I)}$ is pure-injective for every set $I$.

\begin{proposition}\label{Prop: loc noeth}
Let $C$ be a pure-injective partial cosilting object.  If $C$ is $\Sigma$-pure-injective, then heart $\heart{C}$ is locally noetherian.  The converse statement holds when $C$ is cotilting.
\end{proposition}
\begin{proof}
Suppose that $C$ is a $\Sigma$-pure-injective partial cosilting object.  For any set $I$, the pure monomorphism $C^{(I)} \to C^I$ splits and so $C^{(I)}$ is contained in $\Prod{C}$. Since the cosilting t-structure is smashing, we have that $H_{{\bf t}}^0(C)^{(I)} \cong H_{\bf t}^0(C^{(I)})$ is contained in $\Prod{H_{{\bf t}}^0(C)}$ and so $H_{\bf t}^0(C)^{(I)}$ is injective by Lemma \ref{lem: injectives}.  As $H_{\bf t}^0(C)$ is an injective cogenerator of $\heart{C}$, we have shown that $\heart{C}$ is locally noetherian.

Now assume that $\heart{C}$ is locally noetherian and that $C$ is cotilting, so that $H_{\bf t}^0(C) \cong C$.  The assumption on $\heart{C}$ tells us that $H_{\bf t}^0(C)^{(I)} \cong C^{(I)}$ is an injective object of $\heart{C}$ or, equivalently, is contained in $\Prod{H_{\bf t}^0(C)} = \Prod{C}$ and hence is pure-injective.  
\end{proof}

\begin{example}\label{ex: counter-example} The following example was communicated to the author by Michal Hrbek.  Let $\T = \D{\Mod{\Z}}$ and consider the complex $C = \Q \oplus J_\P[-1]$ where $J_\P := \prod_{p\in \P} J_p$ is the product of the $p$-adic integers indexed by the set $\P$ of all prime numbers.  By \cite[Ex.~6.2]{AH}, the object $C$ is a cosilting object of $\T$ corresponding to the filtration \[\dots = \mathrm{Spec}(\Z) = \mathrm{Spec}(\Z) \supset \mathrm{Max}(\Z) \supset \mathrm{Max}(\Z) \supset \emptyset = \emptyset = \dots\] of $\mathrm{Spec}(\Z)$ by specialisation closed subsets of $\mathrm{Spec}(\Z)$. It can be shown that $\heart{C}$ is equivalent to $\Mod{\Q}\times\mathcal{X}$ where $\mathcal{X} \subseteq \Mod{\Z}$ is the full subcategory of torsion abelian groups.  See also \cite[Ex.~6.19]{PV}.  The heart $\heart{C}$ is locally noetherian but $J_p$ is not $\Sigma$-pure-injective for any $p\in \P$ and so the complex $C$ is not $\Sigma$-pure-injective.  This shows that it is necessary to assume that $C$ is a cotilting object for the converse statement to hold in general.
\end{example}

\subsection{Locally coherent hearts.}\label{Sec: Loc coh}
Let ${\bf t} = (\mathcal{U},\mathcal{V})$ be a partial cosilting t-structure on a compactly generated triangulated category $\T$ with pure-injective partial cosilting object $C$ and heart $\heart{C}$.  In this section we address the question of when $\heart{C}$ is locally coherent.  For a Grothendieck category $\mathcal{H}$, let $\fp{\mathcal{H}}$ denote the full subcategory of finitely presented objects in $\mathcal{H}$.  The category $\mathcal{H}$ is called \textbf{locally coherent} if $\mathcal{H}$ is locally finitely presented and $\fp{\mathcal{H}}$ is an abelian exact subcategory of $\mathcal{H}$.

In the same vein as Section \ref{Sec: loc noeth}, we consider which properties of an injective cogenerator $E$ distinguish a locally coherent Grothedieck category from a general Grothedieck category.  In order to do this, we consider the class $\Fpinj{\mathcal{H}}$ of fp-injective objects.  An object $X$ in $\mathcal{H}$ is called \textbf{fp-injective} if $\Ext{\mathcal{H}}(F, X) = 0$ for all objects $F$ in $\fp{\mathcal{H}}$.  An object $X$ is fp-injective if and only if $X$ is \textbf{absolutely pure} i.e.~every exact sequence of the form $0 \rightarrow X \rightarrow Y \rightarrow Z \rightarrow 0$ is pure-exact.  It follows directly from this characterisation, that the class of fp-injective objects coincides with $\ele{E}$ i.e.~the class of pure subobjects of products of copies of $E$.  

Suppose $\mathcal{H}$ is a locally finitely presented category.  We say that a pure-injective object $N$ is an \textbf{elementary cogenerator} if $\ele{N}$ coincides with the smallest definable subcategory of $\mathcal{H}$ containing $N$.  

\begin{remark} Elementary cogenerators were first considered in the context of model theory of modules (see, for example, \cite[Sec.~9.4]{MR933092}).  It turns out that, if $\mathcal{H}$ is a module category, then every definable subcategory $\mathcal{D}$ of $\mathcal{H}$ ``has" an elementary cogenerator in the sense that $\mathcal{D} = \ele{N}$ for some pure injective object $N$ in $\mathcal{D}$.  This follows from the fact that the elementary cogenerators are closely related to the injective cogenerators of certain locally coherent Grothendieck categories (the analogous results for compactly generated triangulated categories are contained in Proposition \ref{prop: ele cog = inj cog}). \end{remark}

We then have the following characterisation of locally coherent Grothedieck categories.

\begin{proposition}\label{prop: loc coh fp inj}
Let $\mathcal{H}$ be a Grothedieck category with injective cogenerator $E$.  The following statements are equivalent. \begin{enumerate}
\item The category $\mathcal{H}$ is locally coherent.
\item The class $\Fpinj{\mathcal{H}}$ is closed under direct limits.
\item The category $\mathcal{H}$ is locally finitely presented and $E$ is an elementary cogenerator.
\end{enumerate}
\end{proposition}
\begin{proof}
By combining (the proof of) \cite[Thm.~3.2]{MR0258888} with \cite[Prop.~3.5]{MR3613439} we have the equivalence between (1) and (2).  For the final statement, note that the class $\Fpinj{\mathcal{H}} = \ele{E}$ is always closed under pure-subobjects and products and so, by \cite[Cor.~4.6]{exactly}, we have that $\ele{E}$ is definable if and only if it is closed under direct limits.
\end{proof}

We may adapt the notion of elementary cogenerator to pure-injective objects in $\T$.  That is, a pure-injective object $X$ in $\mathcal{T}$ is called an \textbf{elementary cogenerator} if $\Def{\T}{X} = \ele{X}$ where $\ele{X}$ denotes the class of all pure subobjects of products of copies of $X$ in $\T$.  

Next we show that the elementary cogenerators in $\T$ are closely related to the injective cogenerators of the quotients of $\Mod{\Tc}$ by hereditary torsion classes of finite type.  A torsion pair $(\mathcal{A}, \mathcal{B})$ in a Grothendieck category $\mathcal{H}$ is said to be of \textbf{finite type} if $\mathcal{B}$ is closed under directed colimits.  It is well-known that the hereditary torsion classes $\mathcal{A}$ of $\mathcal{H}$ are exactly the localising subcategories of $\mathcal{H}$, and so we may form the quotient category $\mathcal{H}/\mathcal{A}$.  If the hereditary torsion pair $(\mathcal{A}, \mathcal{B})$ is of finite type, then both $\mathcal{A}$ and $\mathcal{H}/\mathcal{A}$ are locally coherent ({\cite[Thm.~2.6]{MR1426488}, \cite[Thm.~2.16]{MR1434441}}).

The definable subcategories of $\T$ parametrise the hereditary torsion pairs of finite type in $\Mod{\Tc}$ in the following way.

\begin{proposition}[\cite{MR1899046}]\label{prop: def-torsion}
Let $\T$ be a compactly generated triangulated category.  Then there is a bijective correspondence between the following sets: \begin{itemize}
\item The set of definable subcategories $\mathcal{D}$ of $\T$.
\item The set of hereditary torsion pairs $(\mathcal{A}, \mathcal{B})$ of finite type in $\Mod{\Tc}$.
\end{itemize} with mutually inverse bijections given as follows:
\[ \mathcal{D} \mapsto (\mathcal{A}_\mathcal{D}, \mathcal{B}_\mathcal{D}) \text{ where } \mathcal{A}_\mathcal{D} = \{ F\in \Mod{\Tc} \mid \Hom{\Mod{\Tc}}(F, \y D) =0 \text{ for all } D \in \Pinj{\mathcal{D}} \} \]
\[ \text{ and } (\mathcal{A}, \mathcal{B}) \mapsto \mathcal{D}_{(\mathcal{A},\mathcal{B})} = \{ D \in \mathcal{T} \mid \Hom{\Mod{\Tc}}(F, \y D) = 0 \text{ for all } F \in \mathcal{A}\cap\mod{\Tc}\}.\]
\end{proposition}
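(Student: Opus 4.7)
The plan is to factor the claimed bijection through the classical correspondence between Serre subcategories of $\mod{\Tc}$ and hereditary torsion pairs of finite type in the locally coherent Grothendieck category $\Mod{\Tc}$: by a theorem of Herzog and Krause, the assignment $\mathcal{S} \mapsto (\mathcal{A}_\mathcal{S}, \mathcal{B}_\mathcal{S})$, where $\mathcal{A}_\mathcal{S}$ is the direct-limit closure of $\mathcal{S}$ in $\Mod{\Tc}$ and $\mathcal{B}_\mathcal{S} = \{G \mid \Hom{\Mod{\Tc}}(S,G)=0 \text{ for all } S\in\mathcal{S}\}$, is a bijection with inverse $(\mathcal{A}, \mathcal{B}) \mapsto \mathcal{A} \cap \mod{\Tc}$.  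It therefore suffices to exhibit a bijection between definable subcategories of $\T$ and Serre subcategories of $\mod{\Tc}$ that is compatible with the two formulas in the statement.

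For the forward direction, I would send $\mathcal{D}$ to $\mathcal{S}(\mathcal{D}) := \{F \in \mod{\Tc} \mid \Hom{\Mod{\Tc}}(F, \y D) = 0 \text{ for all } D \in \mathcal{D}\}$ and verify it is a Serre subcategory: closure under subobjects and quotients uses that $\y D$ is fp-injective by \cite[Lem.~1.6]{MR1728877}, so $\Hom{\Mod{\Tc}}(-,\y D)$ is exact on short exact sequences in $\mod{\Tc}$; closure under extensions is immediate.  To match the statement I must check that restricting the quantifier to $D \in \Pinj{\mathcal{D}}$ yields the same class; this uses that every $D \in \mathcal{D}$ embeds as a pure subobject into a pure-injective object of $\mathcal{D}$ (by definability, together with the existence of pure-injective envelopes), combined with left-exactness of $\Hom{\Mod{\Tc}}(F,-)$.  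Taking the direct-limit closure of $\mathcal{S}(\mathcal{D})$ then produces the torsion class $\mathcal{A}$ as described.  For the reverse direction, given $(\mathcal{A},\mathcal{B})$ I form $\mathcal{D}(\mathcal{A}) := \{X \in \T \mid \Hom{\Mod{\Tc}}(F, \y X)=0 \text{ for all } F \in \mathcal{A} \cap \mod{\Tc}\}$, which is definable by construction, and hence landing in the correct target class is automatic.

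Mutual inverseness then reduces to proving $\mathcal{D}(\mathcal{S}(\mathcal{D})) = \mathcal{D}$ and $\mathcal{S}(\mathcal{D}(\mathcal{A})) = \mathcal{A} \cap \mod{\Tc}$.  The first follows at once from Lemma \ref{Lem: defT=defM}, since both sides are cut out from $\T$ by the same Serre subcategory of $\mod{\Tc}$.  The main obstacle is the inclusion $\mathcal{S}(\mathcal{D}(\mathcal{A})) \subseteq \mathcal{A} \cap \mod{\Tc}$, and my plan is to invoke Herzog's theorem that every injective object of $\Mod{\Tc}$ is of the form $\y E$ for some pure-injective $E$ in $\T$.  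Such a $\y E$ lies in the torsion-free class $\mathcal{B}$ precisely when $E \in \mathcal{D}(\mathcal{A})$, and $\mathcal{B}$ is cogenerated by its injective objects; thus if $F \in \mod{\Tc}$ vanishes on $\y X$ for every $X \in \mathcal{D}(\mathcal{A})$, then $F$ is annihilated by every torsion-free injective of $\Mod{\Tc}$, forcing $F$ to be torsion and hence to lie in $\mathcal{A} \cap \mod{\Tc}$.  The delicate point to execute carefully is the compatibility of $\y$ with torsion-theoretic injective envelopes in $\Mod{\Tc}$, i.e.\ the existence of a sufficient supply of torsion-free injectives of the form $\y E$ with $E$ pure-injective and lying in $\mathcal{D}(\mathcal{A})$; once this is in place, the two maps are visibly mutually inverse.
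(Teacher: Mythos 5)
The paper does not give its own proof of this proposition: it is cited directly from Krause's work \cite{MR1899046}, so there is nothing in the paper's text to compare against. Your proposal, however, reconstructs a proof along the standard route, and it is essentially sound. Factoring through the Herzog--Krause correspondence between Serre subcategories of $\mod{\Tc}$ and hereditary torsion pairs of finite type in $\Mod{\Tc}$ is precisely the organising principle of the original argument, and the verification that $\mathcal{S}(\mathcal{D})$ is Serre, using fp-injectivity of $\y D$ (which is \cite[Lem.~1.6]{MR1728877}), is correct; likewise the reduction of the quantifier over $\mathcal{D}$ to $\Pinj{\mathcal{D}}$ via closure of definable subcategories under pure-injective envelopes and left-exactness of $\Hom{\Mod{\Tc}}(F,-)$ is fine, and the identification of torsion-free injectives with $\y E$ for $E \in \Pinj{\mathcal{D}(\mathcal{A})}$, using Krause's representability of injectives in $\Mod{\Tc}$ by pure-injectives of $\T$, is the right mechanism for the hard inclusion $\mathcal{S}(\mathcal{D}(\mathcal{A})) \subseteq \mathcal{A} \cap \mod{\Tc}$.

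Two points deserve more care than your sketch gives them. First, when you say ``taking the direct-limit closure of $\mathcal{S}(\mathcal{D})$ then produces the torsion class $\mathcal{A}$ as described,'' the identity between the direct-limit closure of $\mathcal{S}(\mathcal{D})$ and the class $\{F \mid \Hom{\Mod{\Tc}}(F, \y D)=0 \text{ for all } D\in\Pinj{\mathcal{D}}\}$ is not automatic: you need to first establish $\mathcal{D}(\mathcal{S}(\mathcal{D}))=\mathcal{D}$ so that the torsion-free injectives of the resulting hereditary torsion pair are identified with $\{\y E \mid E \in \Pinj{\mathcal{D}}\}$, and then use that a torsion class equals the left $\Hom$-orthogonal of the torsion-free injectives. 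This is not circular since $\mathcal{D}(\mathcal{S}(\mathcal{D}))=\mathcal{D}$ follows independently from the paper's definition of $\Def{\T}{-}$ together with Lemma~\ref{Lem: defT=defM}, but the logical order must be made explicit. Second, the finite-type hypothesis is used twice and should be flagged: once to guarantee that $\mathcal{A}$ is closed under direct limits (so that the direct-limit closure of a Serre subcategory of $\mod{\Tc}$ is again in $\mathcal{A}$), and once to pass from ``$\Hom{\Mod{\Tc}}(F,\y E)=0$ for $F\in\mathcal{A}\cap\mod{\Tc}$'' to ``$\Hom{\Mod{\Tc}}(G,\y E)=0$ for all $G\in\mathcal{A}$'' when showing $\y E\in\mathcal{B}$ from $E\in\mathcal{D}(\mathcal{A})$, which uses that every object of $\mathcal{A}$ is a direct limit of objects in $\mathcal{A}\cap\mod{\Tc}$ and that contravariant $\Hom$ turns direct limits into inverse limits. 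With these two points spelled out, your argument is a correct and faithful rendering of the classical proof.
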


Therefore, for every definable subcategory $\mathcal{D}$ of $\T$, we obtain a quotient category and a canonical localisation functor $Q_\mathcal{D} \colon \Mod{\Tc} \to \Mod{\Tc}/\mathcal{A}_\mathcal{D}$.  From this point of view, we may make the first connection between elementary cogenerators in $\T$ and injective cogenerators.

\begin{proposition}\label{prop: ele cog = inj cog}
Let $\T$ be a compactly generated triangulated category and let $X$ be a pure-injective object in a definable subcategory $\mathcal{D}$ of $\T$.  The following statements are equivalent. \begin{enumerate}
\item $X$ is an elementary cogenerator and $\mathcal{D} = \Def{\T}{X}$.
\item $Q_\mathcal{D}(\y X)$ is an injective cogenerator of $\Mod{\Tc}/\mathcal{A}_\mathcal{D}$.
\end{enumerate}
Moreover, for every definable subcategory $\mathcal{D}$ of $\T$ there exists an elementary cogenerator $X$ such that $\mathcal{D} = \ele{X}$.
\end{proposition}
\begin{proof}
Note that, since the injective objects of $\Mod{\Tc}/\mathcal{A}_\mathcal{D}$ are given by the injective objects in $\mathcal{B}_\mathcal{D}$, it follows from Proposition \ref{prop: def-torsion}, we have that \[\Inj{\Mod{\Tc}/\mathcal{A}_\mathcal{D}} = \{Q_\mathcal{D}(\y Y) \mid Y\in\mathcal{D} \text{ is pure-injective }\}.\]  Therefore, it follows that $Q_\mathcal{D}(\y X)$ is an injective cogenerator of $\Mod{\Tc}/\mathcal{A}_\mathcal{D}$ if and only if $\Inj{\Mod{\Tc}/\mathcal{A}_\mathcal{D}} = \Prod{Q_\mathcal{D}(\y X)}$ if and only if $\mathcal{B}_\mathcal{D} = \Cogen{\y X}$ if and only if $\mathcal{D} = \ele{X}$ i.e.~$X$ is an elementary cogenerator.  The final statement follows immediately since, for every definable subcategory $\mathcal{D}$, the quotient category $\Mod{\Tc}/\mathcal{A}_\mathcal{D}$ has an injective cogenerator.
\end{proof}

Returning to the question of when $\heart{C}$ is locally coherent for a pure-injective partial cosilting object $C$, we refer to \cite{AMV} where the authors show that $\heart{C}$ is equivalent to a localisation of $\Mod{\Tc}$.  Given Lemma \ref{lem: injectives}, we observe that their proof of \cite[Thm.~3.6]{AMV} extends to partial cosilting objects.  That is, we have an equivalence of categories
\[ \heart{C} \simeq \Mod{\Tc}/ {^{\perp_0}\y C}
\] where ${^{\perp_0}\y C}$ is a hereditary torsion class in $\Mod{\Tc}$ and the torsion-free class is given by $\Cogen{\y C} := \{ F \in \Mod{\Tc} \mid  F \hookrightarrow \y C^S \text{ for some set } S \}$.  Given this description of $\heart{C}$, the following result is almost immediate.

\begin{proposition}\label{Prop: ele cogen loc coh}
Let $\T$ be a compactly generated triangulated category and let $(\mathcal{U},\mathcal{V})$ be a partial cosilting t-structure with heart $\heart{C}$.  If $C$ is an elementary cogenerator, then $\heart{C}$ is locally coherent.
\end{proposition}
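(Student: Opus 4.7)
The plan is to use the equivalence $\heart{C} \simeq \Mod{\Tc}/{^{\perp_0}\y C}$ recorded just before Proposition \ref{prop: fin-type-loc-coh}, combined with that proposition applied inside the locally coherent Grothendieck category $\Mod{\Tc}$. What remains to be shown is that the hereditary torsion pair $({^{\perp_0}\y C}, \Cogen{\y C})$ in $\Mod{\Tc}$ is of finite type; once this is verified, $\heart{C}$ is exhibited as a quotient of a locally coherent category by a finite-type hereditary torsion class and is therefore itself locally coherent.

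To establish the finite-type condition, I would invoke the bijection in Proposition \ref{prop: def-torsion} between definable subcategories of $\T$ and finite-type hereditary torsion pairs in $\Mod{\Tc}$. The natural definable subcategory to test against is $\Def{\T}{C}$, which corresponds under the bijection to the torsion pair whose torsion class is
\[\mathcal{A}_C := \{F \in \Mod{\Tc} \mid \Hom{\Mod{\Tc}}(F, \y D) = 0 \text{ for all } D \in \Pinj{\Def{\T}{C}}\}.\]
If I can identify $\mathcal{A}_C$ with ${^{\perp_0}\y C}$, then $({^{\perp_0}\y C}, \Cogen{\y C})$ lies in the image of the bijection and is thus of finite type.

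The elementary cogenerator hypothesis enters precisely at this step: it gives $\Def{\T}{C} = \ele{C}$. Every pure-injective object of $\ele{C}$ is a pure subobject of some $C^I$, and such a pure monomorphism into a pure-injective splits, so the object lies in $\Prod{C}$; conversely every element of $\Prod{C}$ is pure-injective and lies in $\ele{C}$. Hence $\Pinj{\ele{C}} = \Prod{C}$. Since the restricted Yoneda functor $\y$ preserves products (as $\Hom{\T}(K,-)$ does for every compact $K$) and is additive, the condition $\Hom{\Mod{\Tc}}(F, \y D) = 0$ for all $D \in \Prod{C}$ collapses to $\Hom{\Mod{\Tc}}(F, \y C) = 0$, giving $\mathcal{A}_C = {^{\perp_0}\y C}$ as required.

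The only step requiring genuine care is the identification $\Pinj{\ele{C}} = \Prod{C}$, which is the precise point where the elementary cogenerator hypothesis is used; the remaining manipulations with the bijection and with the preservation properties of $\y$ are routine bookkeeping once the correct definable subcategory has been pinpointed.
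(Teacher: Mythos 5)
Your proposal is correct and follows the same route as the paper: apply the localisation $\heart{C} \simeq \Mod{\Tc}/{^{\perp_0}\y C}$, show that $({^{\perp_0}\y C}, \Cogen{\y C})$ is of finite type by matching it to $\Def{\T}{C}$ under the bijection of Proposition \ref{prop: def-torsion}, and conclude via Proposition \ref{prop: fin-type-loc-coh}. The paper asserts the identification of the torsion pair without detail, and your verification that $\Pinj{\ele{C}} = \Prod{C}$ (using that an elementary cogenerator is by definition pure-injective) and that $\y$ preserves products correctly fills in that step.
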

\begin{proof}
If $C$ is an elementary cogenerator, then the hereditary torsion pair $({^{\perp_0}\y C}, \Cogen{C})$ is the image of $\Def{\T}{C}$ under the correspondence in Proposition \ref{prop: def-torsion}.  In particular, the torsion pair $({^{\perp_0}\y C}, \Cogen{C})$ is of finite-type and so $\heart{C}$ is locally coherent by {\cite[Thm.~2.6]{MR1426488} or \cite[Thm.~2.16]{MR1434441}}.
\end{proof}

As our final result, we show that, in the case where $C$ is a partial cotilting object of $\Der(\base)$ for a compactly generated derivator $\Der$, the converse of Proposition \ref{Prop: ele cogen loc coh} also holds.  Throughout the next lemma, the projection morphism from a product $X^H$ to the component indexed by $h\in H$ will be denoted by $\pi_h$.

\begin{lemma}\label{lem: direct system}
Let $\T$ be a compactly generated triangulated category and let $Y$ be a pure-injective object in $\T$.  If $\{X_i\}_{i\in I}$ is a directed system in $\ele{Y}$, then there is a directed system $\{f_i \colon X_i \rightarrow Y^{S_i}\}_{i\in I}$ of pure monomorphisms consisting of the universal morphisms $f_i \colon X_i \rightarrow Y^{S_i}$ where $S_i := \Hom{\T}(X_i,Y)$ and $\pi_h \circ f_i = h$ for each $h \in S_i$.
\end{lemma}
\begin{proof}
For every $X\in \ele{Y}$, any $\Prod{Y}$-preenvelope is a pure monomorphism.  This applies, in particular, to the universal morphisms $f_i \colon X_i\rightarrow Y^{S_i}$.

Let $i < j$ in $I$ and let $a_{ij} \colon X_i \rightarrow X_j$ be the corresponding morphism in the directed system.  Then, by the universal property of the product $Y^{S_j}$, there is a unique morphism $b_{ij} \colon Y^{S_i} \rightarrow Y^{S_j}$ such that, for each $h\in S_j$, we have $\pi_h \circ b_{ij} = \pi_k$ where $k := \pi_h \circ f_j \circ a_{ij} \in S_i$.  Note that $\pi_h \circ f_j \circ a_{ij} = \pi_h \circ b_{ij} \circ f_i$ for all $h\in S_j$ and so, again by the universal property of the product $Y^{S_j}$, we have that the square \[ \xymatrix{X_i \ar[r]^{f_i} \ar[d]_{a_{ij}} & Y^{S_i} \ar[d]^{b_{ij}} \\ X_j \ar[r]_{f_j} & Y^{S_j}}\] commutes.  Finally we show that the collection of such squares define a directed system.  Consider $i<j<k$ in $I$.  We must show that $b_{jk} \circ b_{ij} = b_{jk}$.  By definition, the morphism $b_{ik}$ is the unique morphism such that $\pi_r \circ b_{ik} = \pi_q$ for all $r\in S_k$ where $q := \pi_r \circ f_k \circ a_{ik}$.  But $b_{jk} \circ b_{ij}$ satisfies this property, so $b_{jk} \circ b_{ij} = b_{jk}$ as desired.
\end{proof}

\begin{theorem}\label{Thm: loc coh ele cogen}
Let $\T$ be the underlying category of a compactly generated derivator $\Der$ and consider a partial cotilting t-structure ${\bf t} = (\mathcal{U},\mathcal{V})$ on $\T$ with pure-injective partial cotilting object $C$.  Then $\heart{C}$ is locally coherent if and only if $C$ is an elementary cogenerator.
\end{theorem}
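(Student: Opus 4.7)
The forward direction is Proposition \ref{Prop: ele cogen loc coh}, so only the converse requires new work. Suppose $\heart{C}$ is locally coherent. Since $C$ is partial cotilting, $C \in \heart{C}$, and by Lemma \ref{lem: injectives}(3) it is an injective cogenerator of $\heart{C}$. Applying Example \ref{Ex: loc coh fp inj} to the locally coherent category $\heart{C}$, the object $C$ is an elementary cogenerator in $\heart{C}$; that is, the pure subobjects in $\heart{C}$ of products of copies of $C$ form a definable subcategory of $\heart{C}$ that is closed under direct limits and coincides with the class of fp-injective objects of $\heart{C}$.

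The strategy is to reduce the theorem to showing that the hereditary torsion pair $({^{\perp_0}\y C}, \Cogen{\y C})$ in $\Mod{\Tc}$ is of finite type; equivalently, that $\Cogen{\y C}$ is closed under direct limits in $\Mod{\Tc}$. Granted this, $\Cogen{\y C}$ is a definable subcategory of $\Mod{\Tc}$ containing $\y C$, so $\Def{\Mod{\Tc}}{\y C} \subseteq \Cogen{\y C}$. Combining this with Lemma \ref{Lem: defT=defM} and a pure-injective hull argument---for $X \in \T$ with $\y X \hookrightarrow \y C^I$ monic, the pure-injective hull $\hat X$ satisfies $\y \hat X \cong E(\y X)$; extending by injectivity and using essentiality of $\y X \hookrightarrow E(\y X)$ yields a monomorphism $\y \hat X \hookrightarrow \y C^I$, so $\hat X$ is a direct summand of $C^I$ by Proposition \ref{prop: pure-triangle}(3), and $X$ is a pure subobject of $C^I$---this yields $\Def{\T}{C} = \ele{C}$, proving that $C$ is an elementary cogenerator of $\T$.

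For the finite-type property, I take a directed system $\{F_j\}_{j \in J}$ in $\Cogen{\y C}$ and pass to the injective hulls $E_j := E(F_j)$ in $\Mod{\Tc}$. Each inclusion $F_j \hookrightarrow \y C^{I_j}$ extends by injectivity to $E_j \to \y C^{I_j}$, which is monic by essentiality, so $E_j \in \Cogen{\y C}$; extending the transition maps $F_j \to F_{j'}$ similarly to maps $E_j \to E_{j'}$ produces a directed system of injectives in $\Cogen{\y C}$. By Proposition \ref{prop: pure-triangle}, each $E_j \cong \y E_j'$ for a pure-injective object $E_j' \in \T$, and since $E_j$ is a summand of some $\y C^{I_j}$, the object $E_j'$ is a summand of $C^{I_j}$ and so lies in $\Prod{C} \subseteq \heart{C}$ (using Lemma \ref{lem: injectives}(1) and the cotilting assumption $C \in \heart{C}$). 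The transition maps lift to $\heart{C}$ via Proposition \ref{prop: pure-triangle}(3), producing a directed system $\{E_j'\}$ in $\heart{C}$. Local coherence of $\heart{C}$ then gives $\varinjlim_{\heart{C}} E_j'$ in the class of fp-injectives of $\heart{C}$, so there exists a pure monomorphism $\varinjlim_{\heart{C}} E_j' \hookrightarrow C^K$ in $\heart{C}$ for some set $K$. By Proposition \ref{Prop: pure exact in heart is pure} this is a pure triangle in $\T$; applying $\y$ together with Lemma \ref{Lem: y and limits in the heart} gives a monomorphism $\varinjlim E_j \cong \y \varinjlim_{\heart{C}} E_j' \hookrightarrow \y C^K$ in $\Mod{\Tc}$. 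Hence $\varinjlim E_j \in \Cogen{\y C}$, and since direct limits in $\Mod{\Tc}$ preserve monomorphisms and $F_j \hookrightarrow E_j$ for each $j$, also $\varinjlim F_j \in \Cogen{\y C}$, as required.

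The delicate point is the coordinated construction of the directed system $\{E_j'\} \subseteq \heart{C}$ representing the injective hulls in $\Mod{\Tc}$, so that its direct limit is accessible both via local coherence of $\heart{C}$ and via Lemma \ref{Lem: y and limits in the heart}. The cotilting hypothesis $C \in \heart{C}$ is essential to ensure each $E_j'$ lies in $\heart{C}$, and the derivator structure is essential (via Lemma \ref{Lem: y and limits in the heart}) to equate $\y$ applied to the direct limit in $\heart{C}$ with the direct limit in $\Mod{\Tc}$.
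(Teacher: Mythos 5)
Your proposal is correct in outline and uses the same essential ingredients as the paper's proof---namely, that local coherence of $\heart{C}$ makes the fp-injectives of $\heart{C}$ closed under direct limits (Example~\ref{Ex: loc coh fp inj}), and the transfer between $\Mod{\Tc}$ and $\heart{C}$ provided by Lemma~\ref{Lem: y and limits in the heart} and Proposition~\ref{Prop: pure exact in heart is pure}. The organizational route is genuinely different, though. The paper works directly with $\ele{C} \subseteq \T$: it starts from a coherent diagram $X \in \Der(I)$ with values in $\ele{C}$, shows $\hocolim{I}{X} \in \ele{C}$, and concludes $\ele{C}$ is definable via Theorem~\ref{Thm: def cats}(2). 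You instead prove that the hereditary torsion pair $({}^{\perp_0}\y C, \Cogen{\y C})$ in $\Mod{\Tc}$ is of finite type---a statement purely about ordinary directed systems in $\Mod{\Tc}$---and then translate back to $\T$ with a pure-injective hull argument. Your route makes the connection to Propositions~\ref{prop: fin-type-loc-coh} and~\ref{prop: def-torsion} more explicit and confines the derivator input to Lemma~\ref{Lem: y and limits in the heart} and Proposition~\ref{Prop: pure exact in heart is pure}, at the cost of extra bookkeeping; the pure-injective hull step can in fact be streamlined (a monomorphism $\y X \hookrightarrow \y C^I$ with $C^I$ pure-injective lifts, by Proposition~\ref{prop: pure-triangle}(3), to a morphism $X \to C^I$ which is automatically a pure monomorphism).

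One step deserves more care: you assert that ``extending the transition maps $F_j \to F_{j'}$ similarly to maps $E_j \to E_{j'}$ produces a directed system of injectives,'' but such extensions along non-essential (or even essential) monomorphisms into injectives are not unique, so the extended maps need not satisfy $E_{j'\to j''}\circ E_{j\to j'} = E_{j\to j''}$. The paper's proof, in constructing the $\beta_{ij}$, has the same subtlety. This can be repaired, for example by passing to the Grothendieck category $\Mod{\Tc}^J$ (for $J$ a directed poset), taking an injective object there containing the diagram $\{F_j\}$, and using that the evaluation functors preserve injectives---and then arguing the components remain in $\Cogen{\y C}$---but as written the coherence of the choice is taken for granted. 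This is a gap to the same degree in both proofs and does not distinguish yours from the paper's. You also invoke $\Prod{C} \subseteq \heart{C}$ (needed to view the $E_j'$ as objects of $\heart{C}$), which is precisely where the cotilting hypothesis $C \in \heart{C}$ enters; the paper relies on the same containment.
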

\begin{proof}
One direction is Proposition \ref{Prop: ele cogen loc coh}, we prove the converse.  The composition and product of pure monomorphisms is a pure monomorphism, so it is clear that $\ele{C}$ is closed under pure subobjects and products.  By Theorem \ref{Thm: def cats}, it remains to show that $\ele{C}$ is closed under directed homotopy colimits.  Let $I$ be a small directed category and let $X \in \Der(I)$ with $X_i \in \ele{C}$.  Then, as in  Lemma \ref{lem: direct system}, there exists a pure monomorphism $X_i \overset{f_i}{\rightarrow} C^{S_i}$ where $S_i := \Hom{\T}(X_i, C)$ for each $i\in I$.  Denote by $a_{ij} \colon X_i \rightarrow X_j$ the morphisms in $\dia{I}X$.  This induces a directed system $\{\y X_i\}_{i\in I}$ in $\Mod{\Tc}$. Since $C$ is pure-injective, we apply Lemma \ref{lem: direct system} to obtain a directed system of monomorphisms \[\xymatrix{ 0 \ar[r] & \y X_i \ar[d]_{\y a_{ij}} \ar[r]^{\y f_i} & \y C^{S_i} \ar[d]^{\y b_{ij}} \\ 0 \ar[r] & \y X_j \ar[r]_{\y f_j} & \y C^{S_j} }\]  for some $C^{S_i} \overset{b_{ij}}{\rightarrow} C^{S_j}$ in $\Der(\base)$.

Then the morphism $\varinjlim_{i\in I} \y f_i \colon \varinjlim _{i\in I}\y X_i \longrightarrow \varinjlim_{i \in I} \y C^{S_i}$ is a monomorphism.  Since $C$ is partial cotilting, it is contained in $\heart{C}$ and so we have that \[\y \left(\varinjlim_{i\in I} C^{S_i}\right) \cong \varinjlim_{i\in I}\y C^{S_i}\] by Lemma \ref{Lem: y and limits in the heart}.  Moreover, since $\heart{C}$ is locally coherent we may apply Proposition \ref{prop: loc coh fp inj}, so the object $\varinjlim_{i\in I} C^{S_i}$ is fp-injective. Since $C$ is an injective cogenerator of $\heart{C}$, there exists a pure monomorphism $\varinjlim_{i\in I} C^{S_i} \overset{h}{\rightarrow} C^J$ for some set $J$.  Composing $\y h$ with $\varinjlim_{i\in I} \y f_i$ we obtain a monomorphism $\y \hocolim{I}{X} \cong \varinjlim_{i\in I} \y X_i \rightarrow \y C^J$ in $\Mod{\Tc}$.  As $C^J$ is pure-injective, this is induced by a pure monomorphism $\hocolim{I}{X}  \rightarrow C^J$.  That is $\hocolim{I}{X}\in \ele{C}$.
\end{proof}

\begin{example}\label{Ex: ele modules are ele}
Consider the compactly generated derivator $\Der_R$ from Example \ref{Ex: derived category}.  We may consider elementary cogenerators in the heart $\mathcal{G} \simeq \Mod{R}$ of the standard t-structure in $\Der_R(\base) \simeq \DModR$. As this is a definable subcategory of $\Der_R(\base)$ (defined by the functors $\Hom{\Der_R(\base)}(\Sigma^iR, -)$ for $i \neq 0$), a module in is an elementary cogenerator in $\Mod{R}$ if and only if it is an elementary cogenerator in $\DModR$.  In particular, let $C$ be a cotilting module in the sense of \cite{AHC}.  Then $C$ is an elementary cogenerator if and only if the heart $\heart{C}$ of the associated t-structure (see \cite[Sec.~4]{MR3239134}) is locally coherent.
\end{example}

\begin{example} Consider the cosilting object $C = \Q \oplus J_\P[-1]$ of $\T = \D{\Mod{\Z}}$ given in Example \ref{ex: counter-example}.  We have already seen that the heart $\heart{C}$ is locally noetherian and so it is locally coherent.  However $C$ is not an elementary cogenerator. Indeed, by \cite[Thm.~8.1]{MR2199207}, the definable closure $\Def{\T}{C}$ of $C$ in $\T$ contains the definable closure of $J_\P[-1]$ in $\Mod{R}[-1]$.  In particular, the object $\Q[-1]$ is an object of $\Def{\T}{C}$ (see, for example, \cite[Sec.~5.2.1]{MR2530988}) but is not contained in $\Prod{C}$.  This example shows that the assumption that $C$ is a cotilting object in Theorem \ref{Thm: loc coh ele cogen} is a necessary one.
\end{example}

\appendix 
\section{Appendix: The axioms (Der1)-(Der4) and shifted derivators.}\label{App: axioms}
\subsection{The axioms.}\label{Sec: axioms}
We will now state the axioms defining a derivator.  In order to state (Der4) we will need the following definition.  Let $u \colon A \rightarrow B$ be a morphism in $\Cat$ and $b$ be an object in $B$.  Then we may form the \textbf{comma category} $u/b$ as follows: the objects of $u/b$ are given by pairs $(a,f)$ with $a$ an object in $A$ and $f \colon u(a) \rightarrow b$. The morphisms $(a,f) \rightarrow (a',f')$ in $u/b$ are given by morphisms $g \colon a\rightarrow a'$ in $A$ such that $f = f' \circ u(g) $.  Let $p \colon u/b \rightarrow A$ be the obvious projection functor.  We may perform the dual construction to obtain the comma category $b/u$ and projection functor $q \colon b/u \rightarrow A$.

A prederivator $\Der$ is a \textbf{derivator} if it has the following properties. \begin{description}
\item[(Der1)] For every small family $\{A_i\}_{i\in I}$ of small categories, the canonical functor \[\Der(\coprod_{i\in I} A_i) \rightarrow \prod_{i\in I} \Der(A_i)\] is an equivalence of categories.
\item[(Der2)] For every small category $A$, a morphisms $f \colon X \rightarrow Y$ in $\Der(A)$ is an isomorphism if and only if $f_a \colon X_a \rightarrow Y_a$ is an isomorphism for every object $a$ in $A$.
\item[(Der3)] For all functors $u \colon A \rightarrow B$, the restriction functor $u^* \colon \Der(B) \rightarrow \Der(A)$ has a left adjoint $u_! \colon \Der(A) \rightarrow \Der(A)$ and a right adjoint $u_* \colon \Der(A) \rightarrow \Der(B)$.
\item[(Der4)] For all functors $u \colon A \rightarrow B$ and all objects $b$ in $B$, there are canonical isomorphisms $\pi_!p^* \rightarrow b^*u_!$ and $b^*u_* \rightarrow \pi_*q^*$.
\end{description}

The functor $\pi_*$ is a homotopy limit functor (see the next subsection) and so (Der4) means that, for all functors $u \colon A \rightarrow B$, the image of the right Kan extension $u^*$ can be expressed point-wise in terms of these simpler right Kan extensions.  A similar statement may be made for left Kan extensions.

The canonical isomorphisms arising in (Der4) are instances of canonical mate transformations.  Many of the proofs in the later sections of this paper will refer to the calculus of canonical mates and the existence of homotopy exact squares.  For a systematic treatment of these techniques, we refer the reader to \cite[Sec.~1.2]{Der}.

\subsection{Shifted derivators.}\label{Sec: shifted}

Let $B$ be a small category and consider the 2-functor $B \times - \colon \Cat^\op \rightarrow \Cat^\op$ taking each $A$ to the product $B \times A$.  Then the \textbf{shifted derivator} $\Der^B$ is defined to be the derivator $\Der$ precomposed with $B\times -$.  This is clearly a 2-functor and in \cite[Thm.~1.25]{Der} it is shown that $\Der^B$ is a derivator.

The following definitions describe the restriction functors and Kan extensions in the shifted derivator.  We have added decorations to indicate which derivator they have been taken with respect to.  We will also use this notation in later sections when necessary: \begin{itemize}
\item For each small category $A$, we have that $\Der^B(A) := \Der(B\times A)$;
\item For each functor $u \colon A \rightarrow C$ in $\Cat$, we have that $u_{\Der^B}^* := (\id_B \times u )^*_\Der$, $u^{\Der^B}_* := (\id_B \times u )_*^\Der$ and $u^{\Der^B}_! := (\id_B \times u )_!^\Der$; 
\item The evaluation functors and the functors $\hocolimf{A}^{\Der^B}$, $\holimf{A}^{\Der^B}$ and $\dia{A}^{\Der^B}$ are all defined as in Sections \ref{Sec: dia} and \ref{Sec: holim} using the above definitions.
\item By \cite[Prop.~2.5]{Der} we have that $\hocolimf{A}^\Der(X_b^{\Der^A}) \cong \hocolimf{A}^{\Der^B}(X)_b^\Der$ and $\holimf{A}^\Der(X_b^{\Der^A}) \cong \holimf{A}^{\Der^B}(X)_b^\Der$ for all $X$ in $\Der(B\times A)$ and $b$ in $B$.
\end{itemize}

\begin{proposition}[{\cite[Prop.~4.3]{Der}}]
Let $\Der$ be a strong and stable derivator.  For any small category $A$, the shifted derivator $\Der^A$ is strong and stable.
\end{proposition}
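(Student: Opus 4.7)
The plan is to verify strength and stability of $\Der^A$ separately, exploiting that each piece of defining data for $\Der^A$ is obtained from that of $\Der$ by precomposing with $A \times -$.

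For strength, I would simply unfold the definition. Given a small category $B$ and a finite free category $F$, the partial underlying diagram functor for $\Der^A$ is
\[
\dia{F,B}^{\Der^A} \colon \Der^A(B \times F) \longrightarrow \Der^A(B)^F,
\]
where, by construction, $\Der^A(B \times F) = \Der(A \times B \times F)$ and $\Der^A(B) = \Der(A \times B)$. Unraveling the definitions, this functor is identified with $\dia{F, A \times B}^\Der$ via the canonical identification $A \times B \times F \cong (A \times B) \times F$ in $\Cat$. Since $\Der$ is strong, $\dia{F, A \times B}^\Der$ is full and essentially surjective, and thus $\Der^A$ is strong.

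For stability, I would use the characterisation that a pointed derivator is stable precisely when, in the coherent diagram category indexed by the commutative square category $\square$, the cartesian squares coincide with the cocartesian squares. First, $\Der^A$ inherits pointedness: a zero object in $\Der(A \times B)$ is detected by evaluation at each object of $A \times B$ via (Der2), and $\Der$ is pointed. Next, for any coherent square $X$ in $\Der^A(\square) = \Der(A \times \square)$ and any object $a$ of $A$, the evaluation functor $(a \times \id_\square)^* \colon \Der(A \times \square) \to \Der(\square)$ commutes with the Kan extensions along the inclusions $\ulcorner \hookrightarrow \square$ and $\lrcorner \hookrightarrow \square$ that characterise cocartesianness and cartesianness. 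This compatibility is an instance of the homotopy-exactness machinery underpinning (Der4), and is parallel to the pointwise formulas for $\hocolimf{A}$ and $\holimf{A}$ in shifted derivators recorded in the last bullet of Section~\ref{Sec: shifted}. Combining this with (Der2), I conclude that $X$ is cocartesian (respectively cartesian) in $\Der^A(\square)$ if and only if each evaluation of $X$ at an object of $A$ is cocartesian (respectively cartesian) in $\Der(\square)$. Stability of $\Der$ then yields the coincidence pointwise, hence in $\Der^A$.

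The main obstacle is confirming that evaluation at each object of $A$ really does commute with the two Kan extensions used to define cartesian and cocartesian squares. This is not an ad hoc calculation but an instance of the systematic calculus of homotopy exact squares developed in \cite[Sec.~1.2]{Der}, which also yields the pointwise formulas for Kan extensions in shifted derivators. Once this preservation is in hand, the proof reduces to applying stability of $\Der$ pointwise and detecting the resulting isomorphisms via (Der2).
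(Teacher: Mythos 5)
The paper does not give a proof of this proposition: it is stated with a citation to \cite[Prop.~4.3]{Der}, so there is no in-paper argument to compare against. That said, your proof is correct and is essentially the standard argument one finds in the cited reference. For strength, the key identification $(\Der^A)^B \cong \Der^{A\times B}$ immediately turns the partial underlying diagram functor $\dia{F,B}^{\Der^A}$ into $\dia{F,A\times B}^{\Der}$, and the claim follows from strength of $\Der$ applied with $A\times B$ in place of $A$. For stability, your reduction is sound: pointedness passes to $\Der^A$ because $\Der^A(\base)=\Der(A)$ already has a zero object (this is a general fact about pointed derivators, not a separate detection argument, though your phrasing amounts to the same thing), and the pointwise detection of (co)cartesian squares relies exactly on the compatibility of the evaluation functors $(a\times\id_\square)^*$ with the Kan extensions along $\ulcorner\hookrightarrow\square$ and $\lrcorner\hookrightarrow\square$, which is the homotopy-exactness of the square comparing $a\times\id$ with $\id_A\times i$ and is the same mechanism behind the pointwise formulas for $\hocolimf{A}$ and $\holimf{A}$ recorded in Section~\ref{Sec: shifted}. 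Combining this with (Der2) gives exactly the equivalence ``$X$ is (co)cartesian in $\Der^A(\square)$ iff $X_a$ is (co)cartesian in $\Der(\square)$ for every $a$'', from which stability of $\Der^A$ follows pointwise. No gap.
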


\begin{example}
Let $\kk$ be a field and let $Q$ be a finite quiver.  Then we can consider the free category generated by $Q$ and so we can also consider $\Der_\kk(Q)$.  Unravelling the definitions, we have that $\Der_\kk(Q)$ is equivalent to the derived category $\D{\Mod{\kk Q}}$ of modules over the path algebra $\kk Q$.  Now, for every small category $A$, we have that $(\Mod{\kk})^{Q\times A} \cong (\Mod{\kk Q})^A$ and so $\Der_{\kk Q}(A) \cong \Der_\kk(Q\times A)$.  The derivator $\Der_{\kk Q}$ is therefore the shifted derivator $\Der_\kk^Q$.
\end{example}

\section{Appendix: Proof of Proposition \ref{Prop: reduced product diagram}.}\label{App: Proof}

For the proof of Proposition \ref{Prop: reduced product diagram}, we will require the following lemma, which was shared with the author by Moritz Groth.  For a small category $A$, let $A^\triangleleft$ denote the category obtained from $A$ by adding a new initial object $-\infty$ and let $i_A \colon A \rightarrow A^\triangleleft$ be the canonical inclusion.  As in \cite{Groth}, we will call an object $X$ in $\Der(A^\triangleleft)$ a \textbf{limiting cone} if it is in the essential image of $(i_A)_*$.

\begin{lemma}\label{Lem: Moritz}
Let $\Der$ be a derivator and let $S$ be a discrete category.  An object $X$ in $\Der(S^\triangleleft)$ is a limiting cone if and only if the underlying diagram $\dia{S^\triangleleft}(X)$ is a product cone i.e.~$\dia{S^\triangleleft}(X)$ exhibits $X_{-\infty}$ as the product of the objects $\{X_s\}_{s\in S}$ in $\Der(\base)$.
\end{lemma}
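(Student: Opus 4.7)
The plan is to exploit the adjunction $(i_S^*, (i_S)_*)$ together with axiom (Der4). Since $S$ is discrete, axiom (Der1) gives an equivalence $\Der(S) \simeq \Der(\base)^S$, and under this equivalence the right adjoint $\pi_* \colon \Der(S) \to \Der(\base)$ corresponds to the categorical product, with the counit of $(\pi^*, \pi_*)$ providing the canonical product projections. Since $-\infty$ is initial in $S^\triangleleft$, the comma category $-\infty/i_S$ is canonically isomorphic to $S$ with projection functor $q = \id_S$; applying (Der4) to $u = i_S$ and $b = -\infty$ then yields a natural isomorphism $(-\infty)^*(i_S)_* \cong \pi_*$, and the canonical mate identifies the cone maps at $-\infty$ with the counit components of $(\pi^*, \pi_*)$.

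For the forward implication, if $X \cong (i_S)_* Y$ for some $Y$ in $\Der(S)$, then by the above $X_{-\infty} \cong \pi_* Y \cong \prod_{s \in S} Y_s$, and the morphisms $X_f \colon X_{-\infty} \to X_s$ (for $f$ the unique arrow $-\infty \to s$ in $S^\triangleleft$) are identified with the canonical product projections. Hence $\dia{S^\triangleleft}(X)$ is a product cone.

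For the converse, assume $\dia{S^\triangleleft}(X)$ is a product cone. Consider the unit morphism $\eta_X \colon X \to (i_S)_* i_S^* X$ of the adjunction. By (Der2), to show that $\eta_X$ is an isomorphism (and hence that $X$ lies in the essential image of $(i_S)_*$) it suffices to check that it is an isomorphism at every object of $S^\triangleleft$. At each $s \in S$, this follows from the fact that $i_S$ is fully faithful, which in the derivator setting implies $i_S^*(i_S)_* \cong \id_{\Der(S)}$ (see e.g.~\cite[Prop.~1.20]{Der}). At $-\infty$, the analysis of the first paragraph identifies $((i_S)_* i_S^* X)_{-\infty}$ with $\prod_{s \in S} X_s$ together with its canonical projections, so $(\eta_X)_{-\infty}$ coincides with the comparison map $X_{-\infty} \to \prod_{s \in S} X_s$ induced by the underlying cone, which is an isomorphism by hypothesis.

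The main obstacle will be the careful identification of the cone morphisms $X_f$ of the underlying diagram with the counit components of $(\pi^*, \pi_*)$, via the canonical mate transformation supplied by (Der4). This is essentially a bookkeeping exercise in the 2-categorical calculus of Kan extensions and canonical mates (cf.~\cite[Sec.~1.2]{Der}), but it is the crucial compatibility without which the identification of $(\eta_X)_{-\infty}$ with the universal comparison map into the product would not be available.
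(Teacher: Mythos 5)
Your argument is correct in outline and takes a route that is recognisably related to the paper's but packaged differently. The paper proves the whole biconditional in one shot by assembling a single 2-categorical pasting diagram whose total composite is the universal comparison morphism $X_{-\infty}\to\prod_{s\in S}X_s$; since the top row of that diagram is inhabited by natural isomorphisms, the total pasting is invertible if and only if the bottom row is, and the bottom row being invertible is precisely the characterisation of limiting cones from \cite[Prop.~2.6]{Groth}. You instead split the biconditional: you read off the forward direction from (Der4) applied to the slice square at $-\infty$, and for the converse you use the criterion that $X$ lies in the essential image of the fully faithful right adjoint $(i_S)_*$ exactly when the unit $\eta_X\colon X\to (i_S)_*i_S^*X$ is invertible, reducing via (Der2) to a pointwise check at each $s$ (automatic from full faithfulness of $i_S$) and at $-\infty$ (where $(\eta_X)_{-\infty}$ is the comparison map). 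This is a legitimate and somewhat more elementary unwinding of the same underlying 2-categorical facts, roughly reproving the cited \cite[Prop.~2.6]{Groth} inline.

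The one caveat is that you explicitly flag, but do not carry out, the ``bookkeeping exercise'' identifying the cone morphisms $X_f\colon X_{-\infty}\to X_s$ with the product projections after transporting across the mate isomorphism from (Der4). That identification is not a side remark: it is exactly the content of the paper's pasting diagram (the left-hand column produces $\Delta_S(X_{-\infty})\to i_S^*\dia{S^\triangleleft}(X)$ from the structure maps of $X$, the right-hand column produces the diagonal into the product, and the whole pasting is shown to equal the universal comparison map). Without verifying this compatibility, the claim that $(\eta_X)_{-\infty}$ ``coincides with the comparison map'' is unsupported, and the forward direction's assertion that the $X_f$ become ``the canonical product projections'' is likewise only asserted. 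So the proposal is sound as a plan, and your route would work, but the step you defer is the one that actually requires the 2-categorical mate calculus the lemma is exercising; you should either carry it out (for instance by drawing the analogue of the paper's pasting square with the slice square at $-\infty$ and checking the mate is the expected projection) or cite \cite[Prop.~2.6]{Groth} directly as the paper does.
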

\begin{proof}
Consider the diagram 
\[\xymatrix{ \Der(\base)^{S^\triangleleft} \ar[rr]^{i_S^*} & & \Der(\base)^S \dlltwocell\omit{\id} \ar@/^1pc/[dr]^{\holimf{S}} & \dltwocell\omit{\cong} \\
\Der(S^{\triangleleft}) \ar[rr]_{i_S^*} \ar[u]^{\dia{S^\triangleleft}} & & \Der(S) \dlltwocell\omit{\alpha^*} \ar[u]^{\dia{S}} \ar[r]^{\pi_*} & \Der(\base) \dltwocell\omit{\eta} \\
\Der(S^{\triangleleft}) \ar[rr]_{\infty^*} \ar[u]^{\id} & & \Der(\base) \ar[u]_{\pi^*} \ar@/_1pc/[ur]_{\id}  &
}\] where the top right triangle is a natural isomorphism by \cite[Prop.~1.7]{Der}; the natural transformation in the bottom right triangle is the unit of the adjunction $(\pi^*, \pi_*)$ and $\alpha^*$ is induced by the square \[ \xymatrix{ S^\triangleleft \ar[d]_{\id} & S \dltwocell\omit{\alpha} \ar[d]^{\pi} \ar[l]_-{i_S} \\
S^\triangleleft & \base . \ar[l]^{-\infty}
}\] Note that $\dia{S} \circ \pi^*$ is the constant diagram functor $\Delta_S$, and so the vertical pasting of the triangles on the right is the diagonal map $Y \rightarrow \holim{S}{\Delta_S(Y)} = \prod_{s\in S} Y$ for each object $Y$ in $\Der(\base)$.  The vertical pasting of the squares on the left yields a natural transformation $\Delta_S(X_{-\infty}) \rightarrow i_S^*(\dia{S^\triangleleft}(X))$ induced by the structure maps of $X$.  The pasting of the entire diagram therefore gives rise to the map $X_{-\infty} \rightarrow \prod_{s\in S} X_s$ produced by the universal property of the product applied to $\dia{S^\triangleleft}(X)$.  So $\dia{S^\triangleleft}(X)$ is exhibiting $X_{-\infty}$ as the product if and only if this morphism is an isomorphism.  Since the top row is inhabited by invertible natural transformations, we have that the total pasting is a natural isomorphism whenever the pasting of the bottow row is a natural isomorphism.  By \cite[Prop.~2.6]{Groth}, this occurs exactly when $X$ is a limiting cone.
\end{proof}

\begin{proof}[Proof of Proposition \ref{Prop: reduced product diagram}]
We first define a small category $P(S)$ containing each proper filter on $S$ as a full subcategory and show that there exists $\tilde{X}$ in $\Der(P(S))$ satisfying the conditions of the theorem.  Later we will restrict to the filter $\mathcal{F}$ in particular.  

Let $P(S)$ be the small category with objects $\emptyset \neq P \in \Pow{S}$ and morphisms $f_{PQ} \colon P \rightarrow Q$ if and only if $Q \subseteq P$.  Consider the functor $l_S \colon S \rightarrow P(S)$ defined by $s \mapsto \{s\}$ and the right Kan extension $(l_S)_* \colon \Der(S) \rightarrow \Der(P(S))$ along $l_S$. For each $X$ in $\Der(S)$, define \[\tilde{X} := (l_S)_*(X).\] 

\noindent \emph{Step 1: Show that for each $P \in P(S)$, the value $\tilde{X}_P$ of $\tilde{X}$ at $P$ is isomorphic to $\prod_{p\in P} X_p$:}  Consider the slice square \[ \xymatrix{(P/l_S) \ar[r]^q \ar[d] & S \ar[d]^{l_S} \\ \base \ar[r]_{P} & P(S) \ultwocell\omit{} .} \]  By (Der4), the associated canonical mate transformation $P^*(l_S)_* \rightarrow \holimf{(P/l_S)} q^*$ is an isomorphism.  Note that the comma category $(P/l_S)$ is equivalent to the discrete category $P$ and $q$ is the canonical embedding of $P\subseteq S$.  By \cite[Prop.~1.7]{Der}, we have that \[\tilde{X}_P = (l_S)_*(X)_P \cong \holimf{(P/l_S)}q^*(X) \cong \prod_{p\in P}X_p.\]

\noindent \emph{Step 2: Show that the value $\tilde{X}_{f_{PQ}} \colon \tilde{X}_P \rightarrow \tilde{X}_Q$ of $\tilde{X}$ at $f_{PQ}$ is the canonical projection $\phi_{PQ}$ where $P$ is in $P(S)$ and $Q = \{p\}$ for $p \in P$:} Consider the fully faithful functor $v_P \colon P^\triangleleft \rightarrow P(S)$ where $p \mapsto \{p\}$ and $-\infty \mapsto P$.  By Lemma \ref{Lem: Moritz}, it suffices to show that $v_P^*\tilde{X}$ is a limiting cone.  Consider the square \[ \xymatrix{P \ar[r]^{i_P} \ar[d]_{j_P} \drtwocell\omit{id} & P^\triangleleft \ar[d]^{v_P} \\ S \ar[r]_{l_S} & P(S) } \] where $j_P$ is the embedding of $P$ into $S$.  If this square is homotopy exact then $v_P^*(l_S)_*(X) \cong (i_P)_*j_P^*(X)$ as desired.  The square can be expressed as the following vertical pasting \[ \xymatrix{P \ar[r]^{i_P} \ar[d]_{id} \drtwocell\omit{id} & P^\triangleleft \ar[d]^{v_P} \\ P \ar[r]_{l_P} \ar[d]_{j_P} \drtwocell\omit{id}  & P(P) \ar[d]^{j_{P(P)}} \\ S \ar[r]_{l_S} &P(S).} \]  By \cite[Lem.~2.12]{Groth}, the top square is homotopy exact and so, by \cite[Lem.~1.14]{Der}, it suffices to show that the bottom square is homotopy exact.  

Since $j_P$ and $j_{P(P)}$ are fully faithful, it follows from \cite[Lem.~2.12]{Groth} that it is enough to show that the canonical mate transformation $(j_P)_!l_P^* \rightarrow l_S^*(j_{P(P)})_!$ is an isomorphism for all $s \in S\setminus P$.  Let $Y$ be an object in $\Der(P(P))$ and note that $l_S^*(j_{P(P)})_!(Y)_s \cong (j_{P(P)})_!(Y)_{\{s\}}$.  The functors $j_{P(P)}$ and $j_P$ are both cosieves.  Since $\{s\}$ is not in the image of $j_{P(P)}$and $s$ is not in the image of $j_P$, it follows from \cite[Prop.~1.23]{Der} that both $(j_{P(P)})_!(Y)_{\{s\}}$ and $(j_P)_!l_P^*(Y)_s$ are isomorphic to initial objects in $\Der(\base)$.  Thus $(j_P)_!l_P^*(Y)_s \rightarrow l_S^*(j_{P(P)})_!(Y)_s$ is the unique isomorphism between initial objects.  It follows that the bottom square is homotopy exact as required.\\

\noindent \emph{Step 3: Show that $\tilde{X}_{f_{PQ}} \colon \tilde{X}_P \rightarrow \tilde{X}_Q$ is the canonical projection $\phi_{PQ}$ for each $Q \subseteq P$ in $P(S)$:} Let $k_{Q^\triangleleft} \colon (Q^\triangleleft)^\triangleleft \rightarrow P(S)$ be the functor defined by $q \mapsto \{q\}$ for all $q\in Q$, $-\infty \mapsto Q$ and $-\infty-1\mapsto P$.  Then, by Step 3, the underlying diagram $\dia{(Q^\triangleleft)^\triangleleft}(k_{Q^\triangleleft}^*\tilde{X})$ is isomorphic to the incoherent diagram consisting of commutative triangles \[\xymatrix{\prod_{ P} X_p \ar[r]^u \ar[dr]_{\pi_{P\{q\}}} & \prod_{ Q} X_q \ar[d]^{\pi_{Q\{q\}}} \\ & X_q }\]  for each $q\in Q$.  By the universal property of the product, the morphism $u$ must be the canonical projection.\\

\noindent \emph{Step 4: Restrict to $\mathcal{F}$:} Let $u \colon \mathcal{F} \rightarrow P(S)$ be the fully faithful functor mapping each $P \in \mathcal{F}$ to itself.  Then let $\Redf := u^*\circ (l_S)_*$.  It follows from the above steps that $\Red{X}$ has the desired properties.
\end{proof}


\bibliographystyle{abbrv}
\bibliography{CosiltHeart}
\end{document}